\def\theglossary{\@restonecoltrue\if@twocolumn\@restonecolfalse\fi
\columnseprule\z@ \columnsep 35\p@
\let\@makessectionhead\indexsec
\@xp\section\@xp*\@xp{\glossaryname}%
\let\item\@idxitem
\parindent\z@  \parskip\z@\@plus.3\p@\relax
\footnotesize}
\def\glossaryname{Index of Notation}
\numberwithin{equation}{section}
\renewcommand\dots{\relax\ifmmode\ldots\else$\,\ldots\,$\fi}
\newcommand\note[1]%
\def\today{\number\year-\ifnum\month<10
0\fi\number\month-\ifnum\day<10 0\fi\number\day}
\def\hour{\ifnum\count253<10
0\number\count253\else\number\count253\fi}
\def\minute{\ifnum\count254<10
0\number\count254\else\number\count254\fi}
\newtheorem{theorem}{Theorem}[subsection]
\newtheorem{lemma}[theorem]{Lemma}
\newtheorem{proposition}[theorem]{Proposition}
\newtheorem*{proposition*}{Proposition}
\newtheorem{section-theorem}{Theorem}[section]
\newtheorem{section-lemma}[section-theorem]{Lemma}
\newtheorem{section-proposition}[section-theorem]{Proposition}
\newtheorem{section-corollary}[section-theorem]{Corollary}
\newtheorem{section-claim}[section-theorem]{Claim}
\newtheorem{section-conjecture}[section-theorem]{Conjecture}
\theoremstyle{definition}
\newtheorem{definition}[theorem]{Definition}
\newtheorem{example}[theorem]{Example}
\newtheorem{remark}[theorem]{Remark}
\newtheorem{section-definition}[section-theorem]{Definition}
\newtheorem{section-example}[section-theorem]{Example}
\newtheorem{section-problem}[section-theorem]{Problem}
\newtheorem{section-remark}[section-theorem]{Remark}
\newcommand\lie{\mathfrak}
\newcommand{\g}{\lie{g}}
\newcommand{\h}{\lie{h}}
\newcommand{\m}{\lie{m}}
\newcommand{\n}{\lie{n}}
\newcommand\bb[1]{{\text{\bf#1}}}
\newcommand\Z{\bb{Z}} 
\newcommand\Q{\bb{Q}}
\newcommand\R{\bb{R}} 
\newcommand\C{\bb{C}}
\newcommand\K{\bb{K}}
\renewcommand\P{\bb{P}}
\newcommand\ca{\mathscr}
\newcommand\X{\ca{X}}
\newcommand\Y{\ca{Y}}
\newcommand\func[1]{\operatorname{\mathrm{#1}}}
\renewcommand\det{\func{det}}
\newcommand\End{\func{End}}
\newcommand\Ext{\func{Ext}}
\newcommand\Hom{\func{Hom}}
\newcommand\id{\func{id}}
\newcommand\ind{\func{ind}}
\renewcommand\index{\func{index}}
\newcommand\Pic{\func{Pic}}
\newcommand\pr{\func{pr}}
\newcommand\rank{\func{rank}}
\newcommand\Rep{\operatorname{\lie{Rep}}}
\newcommand\sym{\func{symbol}}
\newcommand\trace{\func{trace}}
\newcommand\group[1]{{\text{\bf#1}}}
\newcommand\Spin{\group{Spin}}
\newcommand\SO{\group{SO}}
\newcommand\GL{\group{GL}}
\newcommand\SU{\group{SU}}
\newcommand\U{\group{U}}
\newcommand\G{\group{G}}
\newcommand\abs[1]{\lvert#1\rvert}
\newcommand\bigabs[1]{\bigl\lvert#1\bigr\rvert}
\newcommand\quot[1][\kern.3ex]{/\kern-.7ex/_{\kern-.4ex#1}}
\newcommand\bigquot[1][\,\,]{\big/\kern-.85ex\big/_{\!\!#1}}
\newcommand\powl{[\kern-.3ex[}
\newcommand\powr{]\kern-.3ex]}
\newcommand\bigpowl{\bigl[\kern-.6ex\bigl[}
\newcommand\bigpowr{\bigr]\kern-.6ex\bigr]}
\newcommand\sur{\mathrel{\to\kern-1.8ex\to}}
\newcommand\iso{\mathrel{\hookrightarrow\kern-1.8ex\to}}
\newcommand\longto{\longrightarrow}
\newcommand\longhookrightarrow{\lhook\joinrel\longrightarrow}
\newcommand\longinj{\longhookrightarrow}
\newcommand\longsur{\mathrel{\longrightarrow\kern-1.8ex\to}}
\newcommand\dirac{\textit{\dh}} 
\newcommand\Dirac{\textit{\DH}}
\newcommand\Cl{\group{Cl}}
\newcommand\cliff{\mathrm{cliff}}
\renewcommand\th{\group{th}}
\renewcommand\d{\group{d}}
\newcommand\e{\group{e}}
\newcommand\zerodots%
\newcommand\bigzerodots%
\newcommand\eps{\varepsilon}
\newcommand\pt{{\rm pt}}
\renewcommand\subset{\subseteq}
\newcommand\even{{\mathrm{even}}}
\newcommand\odd{{\mathrm{odd}}}
\newcommand\op{{\mathrm{op}}}
\begin{document}


\title[Character formul{\ae} and GKRS multiplets]{Character
formul{\ae} and GKRS multiplets in equivariant K-theory}

\author{Gregory D. Landweber}

\email{gregland@bard.edu}

\address{Department of Mathematics, Bard College, Annandale-on-Hudson,
NY 12504-5000, USA}

\author{Reyer Sjamaar}

\email{sjamaar@math.cornell.edu}

\address{Department of Mathematics, Cornell University, Ithaca, NY
14853-4201, USA}

\subjclass[2000]{19L47 (58J05)}

\date{2012-10-01}


\begin{abstract}
Let $G$ be a compact Lie group, $H$ a closed subgroup of maximal rank
and $X$ a topological $G$-space.  We obtain a variety of results
concerning the structure of the $H$-equivariant K-ring $K_H^*(X)$
viewed as a module over the $G$-equivariant K-ring $K_G^*(X)$.  One
result is that the module has a nonsingular bilinear pairing; another
is that the module contains multiplets which are analogous to the
Gross-Kostant-Ramond-Sternberg multiplets of representation theory.
\end{abstract}


\maketitle


\section*{Introduction}

Frobenius \cite[Band~III, pp.\
82-103]{frobenius;gesammelte-abhandlungen} showed how to ``extend'' a
character of a subgroup to a character of the ambient group.  He
considered only finite groups, but his method, which we will refer to
as \emph{formal induction}, was soon adapted in various ways to
infinite and topological groups, and to this day is one of the most
important technical tools of representation theory.

Induction methods for a compact Lie group $G$ and a closed subgroup
$H$ were systematically studied by Bott
\cite{bott;homogeneous-differential} as an application of the index
theory of equivariant elliptic operators.  These index theory methods
work well only if the subgroup $H$ is of maximal rank.  The purpose of
this paper is to make some applications of Bott's work and later work
building on it to the equivariant K-theory of topological $G$-spaces.
Our main results, all valid under the assumption that $H$ is of
maximal rank, are a relative duality theorem and a multiplet theorem.
Many of our results are known in important special cases, which
explains the largely expository nature of this paper.

The relative duality theorem, Theorem \ref{theorem;duality}, states
that for every compact $G$-space $X$ the $H$-equivariant K-group
$K_H^*(X)$ is equipped with a $K_G^*(X)$-bilinear nonsingular pairing.
This result contains as a special case a form of Poincar\'e duality
for the K-group of the homogeneous space $G/H$, and it generalizes
results of Pittie \cite{pittie;homogeneous-vector}, Steinberg
\cite{steinberg;pittie}, Shapiro
\cite{shapiro;duality-representation},
\cite{shapiro;algebraic-k-theory}, McLeod
\cite{mcleod;kunneth-equivariant}, and Kazhdan and Lusztig
\cite{kazhdan-lusztig;deligne-langlands-hecke}.  For the special case
of a maximal torus this result can be found in our earlier paper
\cite{harada-landweber-sjamaar;divided-differences-character}.

The multiplet theorem, Theorem \ref{theorem;gkrs}, generalizes a
result of Gross et al.\ \cite{gross-kostant-ramond-sternberg}, which
says that the irreducible representations of $H$ are naturally
partitioned into multiplets which have the property that the
alternating sum of the dimensions of the modules in each multiplet is
zero.  Our theorem states that the $H$-equivariant K-group $K_H^*(X)$
of a compact $G$-space $X$ is similarly divided into multiplets.
Under the forgetful map the classes in each multiplet map to classes
in the ordinary K-group $K^*(X)$ with the property that the
alternating sum is zero.

We have included some elementary background material.  For instance,
because the homogeneous space $G/H$ is not always K-orientable (i.e.\
does not always have a $\Spin^c$-structure), it is necessary to
incorporate an orientation twist in the statement of our theorems, and
in fact also in the definition of the induction map.  This necessity
was overlooked by Bott, which led to a number of minor errors in his
paper.  In \S\,\ref{section;induction} we offer a review of Bott's
work, in which we take care to correct these mistakes, and in
\S\S\,\ref{section;dirac}--\ref{section;properties} we develop the
resulting theory of twisted induction.  In Appendix \ref{section;spin}
we classify $\Spin^c$-structures on $G/H$, which is an easy exercise,
but which to our surprise we could not find in the literature.  In a
separate paper \cite{landweber-sjamaar;spin} we will list examples
where such structures do not exist (the simplest of which is the
Grassmannian of oriented $3$-planes in $\R^7$).

Much of the material in this paper grew out of discussions with Megumi
Harada.  The authors are grateful for her help and encouragement.  We
also thank Eckhard Meinrenken, Peter Landweber and the referee for
several helpful suggestions.

\tableofcontents

\section{Induction maps}\label{section;induction}

This section is a review of Bott's paper
\cite{bott;homogeneous-differential} on equivariant index theory for
compact homogeneous spaces, where we extend his work by allowing for
certain twists of representation rings.  This amounts to working not
with ordinary representations but with projective representations with
a suitable central character.  Although this is a routine
generalization, the particulars are confusing and not always correctly
recorded in the literature, which is why we provide a detailed
treatment.  The results are most conveniently formulated in terms of
twisted K-theory in the sense of Donovan and Karoubi
\cite{donovan-karoubi;graded-local}, which is reviewed in Appendix
\ref{section;twist}.  (We will only need to twist K-theory by torsion
classes; we do not require Rosenberg's more general version of this
theory.)  An induction map is then a wrong-way or pushforward
homomorphism between equivariant twisted K-groups.  In ordinary
K-theory pushforward maps are commonly defined by means of a Dirac
operator; in twisted K-theory we employ what we call a twisted Dirac
operator.  This term usually means a Dirac operator with coefficients
in a bundle, but in this paper we will use it, following Murray and
Singer \cite{murray-singer;gerbes-index}, for a special type of
transversely elliptic operator in the sense of Atiyah
\cite{atiyah;elliptic-operators-compact-groups}, which lives not on
the base manifold but on a suitable principal bundle over it.  A
definition of pushforwards in twisted K-theory very similar to ours
was given earlier by Mathai, Melrose and
Singer~\cite{mathai-melrose-singer;equivariant-fractional}.

\subsubsection*{Notation}

Our notational conventions in
\S\S\,\ref{section;induction}--\ref{section;k} are as follows.  See
also the index at the end.

Let $G$%
\glossary{G@$G$, compact connected Lie group}
be a compact connected Lie group, $\X(G)=\Hom(G,\U(1))$%
\glossary{XG@$\X(G)$, character group $\Hom(G,\U(1))$}
\glossary{U1@$\U(1)$, unit circle}
the character group of $G$, and $R(G)$%
\glossary{R(G)@$R(G)$, representation ring}
the Grothendieck ring of the category of finite-dimensional complex
$G$-modules.  A character $\chi\in\X(G)$ determines a one-dimensional
$G$-module $\C_\chi$%
\glossary{C_chi@$\C_\chi$, one-dimensional $G$-module defined by
character $\chi\in\X(G)$}
and hence a class in $R(G)$, which we denote by~$e^\chi$.%
\glossary{e^chi]@$e^\chi$, class of $\C_\chi$ in $R(G)$}

(We use the term \emph{Grothendieck group} in the sense of
\cite[\S\,2, Theorem~1]{quillen;higher-algebraic-k-theory-I}: if
$\lie{E}$ is an exact category, then the Grothendieck or K-group
$K(\lie{E})$%
\glossary{KE@$K(\lie{E})$, Grothendieck group of category $\lie{E}$}
is the abelian group with one generator $[E]$ for every object $E$ and
one relation $[E_0]-[E_1]+[E_2]=0$ for every short exact sequence
$0\to E_0\to E_1\to E_2\to0$ in $\lie{E}$.  If $\lie{E}$ has tensor
products, then $K(\lie{E})$ is in fact a commutative ring.  In this
paper $\lie{E}$ will always be a category of modules or vector bundles
equipped with the usual notion of an exact sequence.)

We choose once and for all a maximal torus $T$%
\glossary{T@$T$, maximal torus of $G$}
of $G$.  We denote the inclusion map by $j_G\colon T\to G$%
\glossary{j_G@$j_G$, inclusion $T\to G$}
and the Weyl group by $W_G=N_G(T)/T$.%
\glossary{WG@$W_G$, Weyl group of $G$}
The map $\X(T)\to R(T)$ defined by $\chi\mapsto e^\chi$ identifies
$R(T)$ with the group ring $\Z[\X(T)]$.  The restriction homomorphism
$j_G^*\colon R(G)\to R(T)$ induces an isomorphism $R(G)\cong
R(T)^{W_G}$.  (See e.g.\ \cite[\S\,IX.3]{bourbaki;groupes-algebres}.)
In particular the ring $R(G)$ has no zero divisors.  Let
$\ca{R}_G\subset\X(T)$%
\glossary{R_G@$\ca{R}_G$, root system of $G$}
be the root system of $(G,T)$.  We fix a basis $\ca{B}_G$%
\glossary{BG@$\ca{B}_G$, basis of $\ca{R}_G$}
of $\ca{R}_G$, which determines a set of positive roots $\ca{R}_G^+$,%
\glossary{R_G+@$\ca{R}_G^+$, positive roots of $G$}
and we let%
\glossary{rhoG@$\rho_G$, half-sum of positive roots of $G$}
\glossary{alpha@$\alpha$, root of $G$}
$$\rho_G=\frac12\sum_{\alpha\in\ca{R}_G^+}\alpha\in\frac12\X(T)$$
be the half-sum of the positive roots.  (Here we regard the character
group $\X(T)$ as a lattice in the vector space of rational characters
$\X(T)_\Q=\Q\otimes_\Z\X(T)$, and we denote by $\frac12\X(T)$ the
collection of rational characters $\chi\in\X(T)_\Q$ such that
$2\chi\in\X(T)$.)

We denote by $H$%
\glossary{H@$H$, closed subgroup of $G$, from
\S\,\ref{subsection;maximal} onward connected and containing $T$}
a closed subgroup of $G$ with inclusion map $i\colon H\to G$.%
\glossary{i@$i$, inclusion $H\to G$}
From \S\,\ref{subsection;maximal} on we will assume that $H$ is
connected and contains $T$.  We let $M$%
\glossary{M@$M$, homogeneous space $G/H$}
be the homogeneous space $G/H$ and $\bar1=1H$%
\glossary{1@$\bar1$, identity coset $1H\in M$}
the identity coset.  For any $H$-module $V$, we denote by%
\glossary{G^HV@$G\times^HV$, homogeneous vector bundle}
$$G\times^HV$$
the homogeneous vector bundle on $M$ with fibre at $\bar1$ equal to
$V$.  This notation is not to be confused with
\glossary{ACB@$A\times_CB$, fibred product}
$$A\times_CB,$$%
which we use for the fibred product of two objects $A$ and $B$ in some
category (e.g.\ the category of groups) over a third object $C$.  The
tangent space to $M$ at $\bar1$ is denoted by $\m$%
\glossary{m@$\m$, tangent space $T_{\bar1}M$}
and the isotropy representation by%
\glossary{eta@$\eta$, tangent representation $H\to\GL(\m)$}
$$\eta\colon H\longto\GL(\m).$$
The $H$-module $\m$ is naturally isomorphic to $\g/\h$;%
\glossary{gfrak@$\g$, Lie algebra of $G$}
and the tangent bundle $TM$ is naturally isomorphic to $G\times^H\m$.
We choose a $G$-invariant inner product on $\g$, and we identify $\g$
with $\g^*$ and $\m$ with the orthogonal complement of $\h$ in $\g$.

\subsection{Formal induction}\label{subsection;formal}
 
Let $R(G)^*=\Hom_\Z(R(G),\Z)$%
\glossary{R(G)*@$R(G)^*$, dual $\Z$-module of $R(G)$}
be the dual abelian group of $R(G)$.  As in
\cite[\S\,1]{bott;homogeneous-differential} we will identify $R(G)^*$
with the group of possibly infinite $\Z$-linear combinations
$\sum_ka_k[E_k]$ of isomorphism classes of irreducible $G$-modules
$E_k$, and also with the Grothendieck group of the category of
admissible $G$-modules.  (A vector in a $G$-module $E$ is called
\emph{$G$-finite} if it is contained in a finite-dimensional
$G$-submodule of $E$.  A $G$-module $E$ is \emph{admissible} if every
vector in $E$ is $G$-finite and every irreducible $G$-module has
finite multiplicity in $E$.)  If $E$ is an admissible $G$-module and
$F$ a finite-dimen\-sional $G$-module, then the dual pairing between
the classes $[E]\in R(G)^*$ and $[F]\in R(G)$ is given by
\begin{equation}\label{equation;pairing}
\langle[E],[F]\rangle=\dim\Hom_G(F,E).
\end{equation}

Let $V$ be a finite-dimen\-sional $H$-module.  We define the
\emph{formal pushforward} or \emph{formal induced module}
$i_!(V)=\ind_H^G(V)$%
\glossary{i"!@$i_"!$, formal induction}%
\glossary{ind@$\ind_H^G$, formal induction}
of $V$ to be the module of $G$-finite vectors of the $G$-module of
smooth global sections $\Gamma(M,G\times^HV)$.%
\glossary{Gamma@$\Gamma$, smooth global sections functor}
It follows from the Peter-Weyl theorem that $i_!(V)$ is an admissible
$G$-module.  (See \cite[\S\,3]{bott;homogeneous-differential}.)  This
fact enables us to define the \emph{formal pushforward homomorphism}
or \emph{formal induction map}
$$i_!\colon R(H)\to R(G)^*$$
by $i_!([V])=[i_!(V)]$.  This map is $R(G)$-linear in the sense that
$i_!(i^*(b)\phi)=bi_!(\phi)$ for all $b\in R(G)$ and $\phi\in R(H)^*$,
where $i^*\colon R(G)\to R(H)$ is the restriction homomorphism induced
by the inclusion $i\colon H\to G$.  Formal induction satisfies
Frobenius reciprocity, which can be stated by saying that $i_!$ is the
composition of the two maps
\begin{equation}\label{equation;composition}
i_!\colon R(H)\longto R(H)^*\overset{^t\mspace{-1mu}i^*}\longto
R(G)^*.
\end{equation}
Here $R(H)\to R(H)^*$ is the natural inclusion and
$^t\mspace{-1mu}i^*$ is the transpose of $i^*$.  (See
\cite[\S\,2]{bott;homogeneous-differential} or
\cite[\S\,2]{segal;representation-ring}.)  

A less desirable property is that the module $i_!(V)$ is often
infinite-dimensional even if $V$ is finite-dimensional.  We wish to
look for induction maps that preserve finite-dimensionality, which
forces us to give up Frobenius reciprocity.  Accordingly, by an
\emph{induction map} we will mean any $R(G)$-linear map $R(H)\to
R(G)$, i.e.\ any element of%
\glossary{R(H)v@$R(H)^\vee$, dual $R(G)$-module of $R(H)$}
$$R(H)^\vee=\Hom_{R(G)}(R(H),R(G)),$$
the dual of the $R(G)$-module $R(H)$.

\subsection{Twisted induction}\label{subsection;twist}

We require a slightly more general notion involving projective
representations.  Part of the following material is taken from
\cite{landweber;twisted}.  Let%
\glossary{sigma@$\sigma$, central extension of $G$ by $\U(1)$}
\glossary{G^sigma@$G^{(\sigma)}$, central extension of $G$ by $\U(1)$}
\begin{equation}\label{equation;extension}
\sigma\colon1\longto\U(1)\longto G^{(\sigma)}\longto G\longto1
\end{equation}
be a central extension of $G$ by the circle $\U(1)$.  (A notational
convention: the label $\sigma$ will refer to the exact sequence
\eqref{equation;extension} as a whole, but when there is no danger of
confusion, we will speak of ``the extension $G^{(\sigma)}$.'')  A
complex $G^{(\sigma)}$-module $V$ has \emph{central character} or
\emph{level} $k\in\Z$ if the subgroup $\U(1)$ acts on $V$ by $z\cdot
v=z^kv$.  Let $\Rep(G,\sigma)$ be the category of finite-dimensional
complex $G^{(\sigma)}$-modules of level~$1$.  We call the Grothendieck
group $R(G,\sigma)$%
\glossary{R(Gsigma)@$R(G,\sigma)$, twisted representation module}
of $\Rep(G,\sigma)$ the \emph{$\sigma$-twisted representation module}
of $G$.  This is the $G$-equivariant twisted K-group of a point; see
Example \ref{example;equivariant-extension} in Appendix
\ref{section;twist}.  In the case of the trivial extension $\sigma=0$
(which is defined by $G^{(0)}=\U(1)\times G$) there is an evident
equivalence of categories $\Rep(G,0)\to\Rep(G)$, which identifies
$R(G,0)$ with $R(G)$.

Recall that the \emph{sum} of two central extensions $\sigma$ and
$\upsilon$ of $G$ by $\U(1)$ is the central extension
$\sigma+\upsilon$ of $G$ by $\U(1)$ defined by
$$G^{(\sigma+\upsilon)}=G^{(\sigma,\upsilon)}/K.$$
Here $G^{(\sigma,\upsilon)}$ denotes the fibred product
$G^{(\sigma)}\times_G G^{(\upsilon)}$ and $K$ is a copy of $\U(1)$
anti-diagonally embedded in $G^{(\sigma,\upsilon)}$.  The extension
\emph{opposite} to $\sigma$ is the extension $-\sigma$ obtained by
precomposing the inclusion $\U(1)\to G^{(\sigma)}$ with the
automorphism $z\mapsto z^{-1}$ of $\U(1)$.  The tensor product functor
$$\Rep(G,\sigma)\times\Rep(G,\upsilon)\longto\Rep(G,\sigma+\upsilon)$$
induces a bi-additive map
\begin{equation}\label{equation;sum}
R(G,\sigma)\times R(G,\upsilon)\longto R(G,\sigma+\upsilon).
\end{equation}
We will use multiplicative notation for this map; so if $a=[E]\in
R(G,\sigma)$ and $b=[F]\in R(G,\upsilon)$, then $ab=[E\otimes F]\in
R(G,\sigma+\upsilon)$.  By taking $\sigma=0$, we see that
$R(G,\upsilon)$ is an $R(G)$-module for all $\upsilon$.  With respect
to this module structure, the multiplication law \eqref{equation;sum}
is $R(G)$-bilinear.

\begin{remark}\label{remark;ext}
If two extensions $\sigma$ and $\upsilon$ are equivalent, then the
twisted representation modules $R(G,\sigma)$ and $R(G,\upsilon)$ are
isomorphic, but the isomorphism depends on the choice of the
equivalence.  For this reason we do \emph{not} identify $R(G,\sigma)$
with $R(G,\upsilon)$ unless an explicit equivalence
$\sigma\sim\upsilon$ has been specified.
\end{remark}

For any extension $G^{(\sigma)}$ as in \eqref{equation;extension} and
any Lie group homomorphism $f\colon L\to G$ we can form the
\emph{pullback} extension
$$
f^*\sigma\colon1\longto\U(1)\longto L^{(f^*\sigma)}\longto L\longto1,
$$
where $L^{(f^*\sigma)}=G^{(\sigma)}\times_GL$ is the fibred product of
$G^{(\sigma)}$ and $L$ with respect to the homomorphisms
$G^{(\sigma)}\to G$ and $f\colon L\to G$.  There is an induced
$R(L)$-linear homomorphism
$$
f^*\colon R(G,\sigma)\longto R(L,f^*\sigma).
$$

For instance, taking $f$ to be the inclusion $i\colon H\to G$ we get a
central extension $H^{(i^*\sigma)}$ of $H$, which is simply the
preimage of $H$ under the projection $G^{(\sigma)}\to G$.  To
economize on notation we will write $H^{(\sigma)}$ instead of
$H^{(i^*\sigma)}$.  The map $G^{(\sigma)}\to G$ descends to a
diffeomorphism $G^{(\sigma)}/H^{(\sigma)}\cong M$, which allows us to
identify $M$ with the homogeneous space $G^{(\sigma)}/H^{(\sigma)}$.

\begin{lemma}\label{lemma;twist-module}
Let $\sigma$ and $\upsilon$ be central extensions of $G$ by $\U(1)$.
\begin{enumerate}
\item\label{item;isotype}
The $R(G)$-module $R\bigl(G^{(\sigma)}\bigr)$ is the direct sum of the
submodules $R(G,k\sigma)$ over all levels $k\in\Z$.  Each summand
$R(G,k\sigma)$ is nonzero.
\item\label{item;extension-invariant}
The group $T^{(\sigma)}$ is a maximal torus of $G^{(\sigma)}$; the
submodule $R(T,\sigma)$ of $R(T^{(\sigma)})$ is preserved by the
$W_G$-action; and the restriction homomorphism $R(G,\sigma)\to
R(T,\sigma)$ is an isomorphism onto $R(T,\sigma)^{W_G}$.
\item\label{item;extension-sum}
Let $a\in R(G,\sigma)$ and $b\in R(G,\upsilon)$.  If $ab\in
R(G,\sigma+\upsilon)$ is equal to $0$, then $a=0$ or $b=0$.
\end{enumerate}
\end{lemma}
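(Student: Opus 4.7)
My plan is to dispose of the three parts in order, the key tool throughout being the decomposition of every $G^{(\sigma)}$-module according to the level of the central subgroup $\U(1)\subset G^{(\sigma)}$. For~(i), any finite-dimensional $G^{(\sigma)}$-module $V$ splits canonically into $V=\bigoplus_k V_k$ with $V_k$ a $G^{(\sigma)}$-submodule of central character~$k$, which yields $R(G^{(\sigma)})=\bigoplus_k R(G,k\sigma)$ as $R(G)$-modules. To show that every summand is nonzero, I would observe that the set of levels realized by irreducibles is closed under duals and tensor products and so forms a subgroup $n\Z\subset\Z$; if $n>1$ then every representation of $G^{(\sigma)}$ would be trivial on $\mu_n\subset\U(1)$, contradicting the fact that Peter--Weyl produces a faithful finite-dimensional representation of~$G^{(\sigma)}$.

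For~(ii), the crux is that $T^{(\sigma)}$, although defined only as a central extension of~$T$, is genuinely a torus. It is a compact connected Lie group whose Lie algebra is a one-dimensional central extension of the abelian algebra~$\t$, hence two-step nilpotent; but a compact (reductive) nilpotent Lie algebra is abelian, so $T^{(\sigma)}$ is abelian and therefore a torus, and it is maximal because any torus in $G^{(\sigma)}$ projects to a torus of $G$ contained in~$T$. The equality $W_{G^{(\sigma)}}=W_G$ follows from the fact that $\U(1)$ is central, so $N_{G^{(\sigma)}}(T^{(\sigma)})$ is the preimage of~$N_G(T)$; this $W_G$-action commutes with the central $\U(1)$-action and so preserves each level piece. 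Applying the classical Weyl restriction isomorphism to $G^{(\sigma)}$ and isolating the level-one summand on both sides gives $R(G,\sigma)\cong R(T,\sigma)^{W_G}$.

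For~(iii), the injectivity of restriction from~(ii) reduces the claim to the torus case. Inside $R(T^{(\sigma,\upsilon)})$, where $T^{(\sigma,\upsilon)}=T^{(\sigma)}\times_T T^{(\upsilon)}$ is the analogous maximal torus of~$G^{(\sigma,\upsilon)}$, the modules $R(T,\sigma)$ and $R(T,\upsilon)$ appear as the level-$(1,0)$ and $(0,1)$ pieces of the bigrading by the two central circles, and the product of the lemma is ordinary multiplication in that ring; but $R(T^{(\sigma,\upsilon)})=\Z[\X(T^{(\sigma,\upsilon)})]$ is the group ring of a free abelian group, hence an integral domain, so the product of nonzero elements is nonzero. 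The main obstacle in the whole argument is the verification in~(ii) that $T^{(\sigma)}$ is abelian---a~priori central extensions of tori by~$\U(1)$ could be Heisenberg-like and non-abelian, and the essential input is that compactness of $T^{(\sigma)}$ forces the defining Lie-algebra cocycle on~$\t$ to vanish; once this is established, the remaining statements are level-graded refinements of classical facts.
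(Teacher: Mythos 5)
Your proof is correct, and its overall route coincides with the paper's: decompose every $G^{(\sigma)}$-module by central character to get (i), reduce (ii) to the classical restriction isomorphism for the connected group $G^{(\sigma)}$, and deduce (iii) from the fact that the product lives in the representation ring of the connected extension $G^{(\sigma,\upsilon)}$ (equivalently, after restricting, in the integral domain $\Z[\X(T^{(\sigma,\upsilon)})]$), which has no zero divisors. The paper proves (i) and (ii) in an appendix in slightly greater generality (connected central extensions by an arbitrary compact abelian $C$), but the argument is the same one you give.

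Two local points differ. For the non-vanishing of each summand $R(G,k\sigma)$, the paper exhibits an explicit representation of each level, namely an irreducible constituent of the formally induced module $\ind_{\U(1)}^{G^{(\sigma)}}(\C_k)$; you instead observe that the set of realized levels is a subgroup $n\Z$ and rule out $n\neq 1$ using the existence of a faithful finite-dimensional representation of the compact group $G^{(\sigma)}$. Both are correct; yours is arguably more elementary, though you should say explicitly that the case $n=0$ (only the trivial level realized, so all of $\U(1)$ acts trivially) is excluded by the same faithfulness argument, not only $n>1$. Second, you supply the verification, which the paper merely asserts, that $T^{(\sigma)}$ is actually a torus: its Lie algebra is nilpotent, and in a compact Lie algebra every $\ad(x)$ is semisimple, so a nilpotent subalgebra is abelian. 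That is a worthwhile addition rather than a deviation.
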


\begin{proof}
\eqref{item;isotype} and \eqref{item;extension-invariant} are special
cases of Lemma \ref{lemma;extension} in the Appendix (where
$G^{(\sigma)}$ is denoted by $\check{G}$ and $R(G,k\sigma)$ by
$R^k(\check{G})$).  The group
$G^{(\sigma,\upsilon)}=G^{(\sigma)}\times_G G^{(\upsilon)}$ is a
central $\U(1)$-extension of $G^{(\sigma)}$ as well as of
$G^{(\upsilon)}$.  Therefore, by~\eqref{item;isotype}, the modules
$R(G,\sigma)$ and $R(G,\upsilon)$ are naturally isomorphic to
submodules of $R(G^{(\sigma,\upsilon)})$.  Under this isomorphism, the
bilinear map \eqref{equation;sum} corresponds to multiplication in
$R(G^{(\sigma,\upsilon)})$.  Since $G^{(\sigma,\upsilon)}$ is a
central extension of $G$ by the torus $\U(1)^2$, it is connected, and
therefore $R(G^{(\sigma,\upsilon)})$ has no zero divisors.
\end{proof}

\begin{example}\label{example;torus}
Suppose $G=T$ is a torus.  Choose a character $\mu$ of $T^{(\sigma)}$
of level $1$.  Then the character group $\X\bigl(T^{(\sigma)}\bigr)$
is the direct sum of $\X(T)$ and $\Z\mu$, and therefore
$R\bigl(T^{(\sigma)}\bigr)=R(T)[e^\mu,e^{-\mu}]$ is a Laurent
polynomial algebra over $R(T)$ in one variable.  The level $k$
submodule is $R(T,k\sigma)=R(T)e^{k\mu}$, which is a free
$R(T)$-module of rank $1$ on the generator $e^{k\mu}$.
\end{example}

\begin{lemma}\label{lemma;sections}
Let $\sigma$ be a central extension of $G$ by $\U(1)$ and let $V$ be
an $H^{(\sigma)}$-module of level~$1$.  Then the space of smooth
sections of the $G^{(\sigma)}$-homogeneous vector bundle
$G^{(\sigma)}\times^{H^{(\sigma)}}V$ over $M$ is a
$G^{(\sigma)}$-module of level~$1$.
\end{lemma}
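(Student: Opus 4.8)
The plan is to equip the section space $\Gamma\bigl(M,G^{(\sigma)}\times^{H^{(\sigma)}}V\bigr)$ with the standard left $G^{(\sigma)}$-action coming from left translation on the homogeneous bundle, and then to verify that the central subgroup $\U(1)\subset G^{(\sigma)}$ acts by scalar multiplication, i.e.\ that the level is $1$. The only inputs are the level-$1$ hypothesis on $V$ and the fact that $\U(1)$ lies in the kernel of $G^{(\sigma)}\to G$, hence in $H^{(\sigma)}$, and therefore acts trivially on the base $M=G^{(\sigma)}/H^{(\sigma)}$.

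First I would recall the model for the total space: its points are equivalence classes $[g,v]$ with $g\in G^{(\sigma)}$ and $v\in V$, subject to $[gh,v]=[g,hv]$ for $h\in H^{(\sigma)}$, and $G^{(\sigma)}$ acts by $g'\cdot[g,v]=[g'g,v]$. For a central element $z\in\U(1)$ one computes
\[
z\cdot[g,v]=[zg,v]=[gz,v]=[g,zv]=[g,z^{1}v]=z[g,v],
\]
using in turn the centrality of $z$, the relation defining the associated bundle (valid since $z\in H^{(\sigma)}$), and the level-$1$ hypothesis on $V$. Thus $z$ acts on the fibre of $G^{(\sigma)}\times^{H^{(\sigma)}}V$ over each point of $M$ as multiplication by the scalar $z$.

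Next I would transfer this to sections. With the left action $(g'\cdot s)(x)=g'\cdot s\bigl((g')^{-1}x\bigr)$ for $s\in\Gamma\bigl(M,G^{(\sigma)}\times^{H^{(\sigma)}}V\bigr)$ and $x\in M$, taking $g'=z\in\U(1)$ and using $z^{-1}x=x$ (because $\U(1)$ acts trivially on $M$) gives $(z\cdot s)(x)=z\cdot s(x)=z\,s(x)$, i.e.\ $z\cdot s=z\,s$. Hence the section space is a $G^{(\sigma)}$-module of level $1$. Smoothness causes no trouble, since left translation by $G^{(\sigma)}$ is a smooth bundle automorphism of $G^{(\sigma)}\times^{H^{(\sigma)}}V$ covering a diffeomorphism of $M$, and therefore carries smooth sections to smooth sections.

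I expect no genuine obstacle here; the entire content is unwinding the definition of the homogeneous bundle and of the induced action on sections. The one place to be careful is bookkeeping with conventions — which copy of $\U(1)$ is in play, the sign convention in the definition of level/central character, and the precise form of the left action on sections — so I would fix these explicitly at the outset and check that the displayed chain of equalities respects them.
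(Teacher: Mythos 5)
Your proof is correct and follows essentially the same route as the paper: the paper phrases sections as equivariant functions $f\colon G^{(\sigma)}\to V$ and computes $(z\cdot f)(g)=f(z^{-1}g)=zf(g)$ using centrality, membership of $z$ in $H^{(\sigma)}$, and the level-$1$ hypothesis, which is exactly your chain of equalities written in the associated-bundle model. No issues.
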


\begin{proof}
Put $E=G^{(\sigma)}\times^{H^{(\sigma)}}V$.  A section of $E$ is a
function $f\colon G^{(\sigma)}\to V$ satisfying $f(gh^{-1})=h\cdot
f(g)$ for all $h\in H^{(\sigma)}$.  The action of an element $k\in
G^{(\sigma)}$ on $f$ is defined by $(k\cdot f)(g)=f(k^{-1}g)$ for
$g\in G^{(\sigma)}$.  In particular, a central element
$z\in\U(1)\subset H^{(\sigma)}\subset G^{(\sigma)}$ acts by
$$
(z\cdot f)(g)=f(z^{-1}g)=z\cdot f(g)=zf(g),
$$
where the last equality follows from the assumption that $V$ is an
$H^{(\sigma)}$-module of level $1$.  Therefore $z\cdot f=zf$.
\end{proof}

\begin{definition}\label{definition;induction}
A \emph{twisted induction map} is an element of
$$\Hom_{R(G)}\bigl(R(H,\tau),R(G,\sigma)\bigr),$$
where $\sigma$ is a central extension of $G$ by $\U(1)$ and $\tau$%
\glossary{tau@$\tau$, central extension of $H$ by $\U(1)$}
\glossary{H@$H^{(\tau)}$, central extension of $H$ by $\U(1)$}
is a central extension of $H$ by $\U(1)$.
\end{definition}

\subsection{Elliptic operators}\label{subsection;elliptic}

We will obtain twisted induction maps from trans\-verse\-ly elliptic
differential operators.  To motivate Definition
\ref{definition;twist-operator} below let us first consider the
untwisted case.  Let $P$ be compact $G\times H$-manifold equipped with
a $\Z/2\Z$-graded $G\times H$-equivariant vector bundle $E=E^0\oplus
E^1$ and a $G\times H$-equivariant differential operator
$D\colon\Gamma(P,E^0)\to\Gamma(P,E^1)$ which is transversely elliptic
with respect to $H$.  (Recall that ``$\Gamma$'' denotes smooth
sections.)  There are a number of equivalent ways to define the
\emph{equivariant index} of $D$.  We will define it as the module of
$G$-finite vectors of $\ker(D)$ minus the module of $G$-finite vectors
of $\ker(D^*)$.  By the discussion in \cite[Lecture~2,
p.~17]{atiyah;elliptic-operators-compact-groups} the index is an
element of
\begin{equation}\label{equation;index}
\index(D)\in R(H)^*\otimes_\Z R(G),
\end{equation}
and so can be viewed as a $\Z$-linear map from $R(H)$ to $R(G)$.  It
follows from
\cite[Theorem~3.5]{atiyah;elliptic-operators-compact-groups} (see also
Lemma \ref{lemma;elliptic}\eqref{item;twist} below) that this map is
in fact $R(G)$-linear and is therefore an induction map in our sense.

Now suppose that the $H$-action on $P$ is free, so that the quotient
$X=P/H$ is a manifold and the quotient map $\pr\colon P\to X$ is a
principal $H$-bundle.  Then the quotient $F=E/H$ is a $G$-equivariant
vector bundle on $X$, and we have a natural identification
$\Gamma(X,F)\cong\Gamma(P,E)^H$.  It follows that $D$ restricts to an
operator $D_0\colon\Gamma(X,F^0)\to\Gamma(X,F^1)$, which is a
$G$-equivariant elliptic operator on $X$ called the operator
\emph{induced} by $D$.  As shown in
\cite[Lecture~3]{atiyah;elliptic-operators-compact-groups}, every
$G$-equivariant elliptic operator $D_0$ on $X$ is induced by a
$G\times H$-equivariant differential operator $D$ on $P$ which is
transversely elliptic with respect to $H$.  There are many possible
choices for such an operator $D$, but its principal symbol satisfies
$$
\sym(D)|T_H^*P=\pr^*\sym(D_0)
$$
(see the proof of
\cite[Theorem~3.1]{atiyah;elliptic-operators-compact-groups}), where
$T_H^*P$ denotes the horizontal cotangent bundle of the principal
$H$-bundle $P$, and $\sym(D)|T_H^*P$ denotes the restriction of the
symbol to $T_H^*P$.  It follows from this that the equivariant index
of $D$, and hence the associated induction map, are uniquely
determined by $D_0$.  Since we are primarily interested not in $D$ but
in its index, we will sometimes allow ourselves to blur the
distinction between the operators $D$ and $D_0$.

To get twisted induction maps we will modify this set-up by replacing
$G$ with a central extension $G^{(\sigma)}$ and $H$ with a central
extension $H^{(\tau)}$.  We will take $P$ to be a $G^{(\sigma)}\times
H^{(\sigma)}$-manifold on which $H^{(\sigma)}$ acts freely.  However,
the bundle $E$ and the operator $D\colon\Gamma(P,E^0)\to\Gamma(P,E^1)$
will not be equivariant with respect to $G^{(\sigma)}\times
H^{(\sigma)}$, but with respect to $G^{(\sigma)}\times
H^{(\sigma,\tau)}$, where
$$
H^{(\sigma,\tau)}=H^{(\sigma)}\times_HH^{(\tau)}.
$$
Therefore $H^{(\sigma)}$ does not act on $E$, and so $D$ does not
descend to an operator on the quotient $P/H^{(\sigma)}$ (except when
$\tau=0$).  Nevertheless we will call $D$ a ``twisted'' operator on
$P/H^{(\sigma)}$.

Specifically, we take $P=G^{(\sigma)}$, so that $P/H^{(\sigma)}\cong
G/H=M$.  We view the group $H^{(\sigma,\tau)}$ as an iterated
$\U(1)$-central extension of $H$, namely
$$H^{(\sigma,\tau)}=\bigl(H^{(\sigma)}\bigr)^{(\tau)},$$
the pullback of the central extension $\tau$ via the homomorphism
$H^{(\sigma)}\to H$.  Thus, by an $H^{(\sigma,\tau)}$-module of level
$1$ we mean an $H^{(\sigma,\tau)}$-module on which the second factor
of the central torus $\U(1)\times\U(1)$ acts with weight~$1$.  We take
$E$ to be a product bundle $E=G^{(\sigma)}\times U$, where $U$ is a
$\Z/2\Z$-graded $H^{(\sigma,\tau)}$-module $U$ of level $1$.  Note
that $E$ is $G^{(\sigma)}\times H^{(\sigma,\tau)}$-equivariant, where
$G^{(\sigma)}$ acts on the base $G^{(\sigma)}$ by left multiplication,
and $H^{(\sigma,\tau)}$ acts on the base by right multiplication via
the homomorphism $H^{(\sigma,\tau)}\to H^{(\sigma)}$ and linearly on
the fibre $U$.

\begin{definition}\label{definition;twist-operator}
Let $U$ be a $\Z/2\Z$-graded $H^{(\sigma,\tau)}$-module $U$ of level
$1$.  A \emph{$(\sigma,\tau)$-twisted equivariant elliptic
differential operator} on $M$ (or a \emph{$(\sigma,\tau)$-twisted
operator} for short) is a $G^{(\sigma)}\times
H^{(\sigma,\tau)}$-equivariant linear differential operator
$$
D\colon\Gamma\bigl(G^{(\sigma)},G^{(\sigma)}\times
U^0\bigr)\longto\Gamma\bigl(G^{(\sigma)},G^{(\sigma)}\times U^1\bigr)
$$
on $G^{(\sigma)}$ which is transversely elliptic relative to
$H^{(\sigma,\tau)}$.
\end{definition}

If $\tau=0$ is the trivial extension, then a $(\sigma,\tau)$-twisted
operator descends to a $G^{(\sigma)}$-equivariant elliptic
differential operator on $M$ in the usual sense.

Following \eqref{equation;index} we view the index of a
$(\sigma,\tau)$-twisted operator $D$ as a $\Z$-linear map
$\index(D)\colon R\bigl(H^{(\sigma,\tau)}\bigr)\to
R\bigl(G^{(\sigma)}\bigr)$.

\begin{lemma}\label{lemma;elliptic}
Let $D$ be a $(\sigma,\tau)$-twisted operator.
\begin{enumerate}
\item\label{item;twist}
The index of $D$ is $R\bigl(G^{(\sigma)}\bigr)$-linear and maps
$R(H,\tau-\sigma)$ to $R(G,\sigma)$.
\item\label{item;elliptic}
Let $V$ be an $H^{(\tau-\sigma)}$-module of level $1$.  Let $E_V$ be
the $\Z/2\Z$-graded $G^{(\sigma)}$-homogeneous vector bundle
$G^{(\sigma)}\times^{H^{(\sigma)}}(U\otimes V^*)$ over $M$.  Then $D$
descends to a $G^{(\sigma)}$-equivariant elliptic differential
operator
$$
D_V\colon\Gamma\bigl(M,E_V^0\bigr)\longto\Gamma\bigl(M,E_V^1\bigr)
$$
with the property that $\index(D_V)=\langle\index(D),[V]\rangle$.
\end{enumerate}
\end{lemma}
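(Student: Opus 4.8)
The plan is to prove \eqref{item;elliptic} first and to read \eqref{item;twist} off it, together with the general properties of indices of transversely elliptic operators. Let $V$ be a level-$1$ module for $H^{(\tau-\sigma)}$; regarded in the natural way as an $H^{(\sigma,\tau)}$-module it has the $\tau$-circle acting with weight $1$ and the $\sigma$-circle with weight $-1$. Since $U$ also has $\tau$-weight $1$, the $\tau$-circle acts trivially on $U\otimes V^*$, which therefore descends to an $H^{(\sigma)}$-module; hence $E_V=G^{(\sigma)}\times^{H^{(\sigma)}}(U\otimes V^*)$ is a genuine $\Z/2\Z$-graded $G^{(\sigma)}$-homogeneous bundle on $M=G^{(\sigma)}/H^{(\sigma)}$. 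Arguing as in the proof of Lemma~\ref{lemma;sections}, I would identify $\Gamma(M,E_V^j)$ (for $j=0,1$) with the space of $H^{(\sigma,\tau)}$-invariants in $\Gamma\bigl(G^{(\sigma)},G^{(\sigma)}\times U^j\bigr)\otimes V^*$, on which $G^{(\sigma)}$ still acts by left translation. Since $D$ is $G^{(\sigma)}\times H^{(\sigma,\tau)}$-equivariant, $D\otimes\id_{V^*}$ preserves these invariants, and $D_V$ is by definition its restriction; it is then automatically $G^{(\sigma)}$-equivariant.

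To see that $D_V$ is elliptic I would use the relation between the symbol of a transversely elliptic operator and that of its induced operator, exactly as in the untwisted discussion preceding Definition~\ref{definition;twist-operator} and in \cite[Lecture~3]{atiyah;elliptic-operators-compact-groups}: transverse ellipticity of $D$ means $\sym(D)$ is invertible on the conormal directions to the $H^{(\sigma,\tau)}$-orbits in $G^{(\sigma)}$ — equivalently on the pullback of $T^*M$ — so the induced symbol $\sym(D_V)$ is invertible off the zero section. Next I would fix a $G^{(\sigma)}\times H^{(\sigma,\tau)}$-invariant Hermitian metric on the bundle and a $G^{(\sigma)}$-invariant density on $G^{(\sigma)}$, so that forming formal adjoints commutes with the restriction above; this gives $\ker D_V=\Hom_{H^{(\sigma,\tau)}}(V,\ker D)$ and likewise $\ker D_V^*=\Hom_{H^{(\sigma,\tau)}}(V,\ker D^*)$, both finite-dimensional since $D_V$ is elliptic on the compact manifold $M$. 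Therefore
\[
\index(D_V)=[\ker D_V]-[\ker D_V^*]=\langle\index(D),[V]\rangle
\]
in $R\bigl(G^{(\sigma)}\bigr)$, the last equality being the definition \eqref{equation;pairing} of the dual pairing, extended $R(G^{(\sigma)})$-linearly over the second tensor factor. This proves \eqref{item;elliptic}.

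For \eqref{item;twist}, the $R(G^{(\sigma)})$-linearity of $\index(D)$ is the assertion — applied to the honestly equivariant operator $D$ and the compact groups $G^{(\sigma)}$ and $H^{(\sigma,\tau)}$ — that the index of a transversely elliptic operator is linear over the representation ring of the ambient factor, i.e.\ \cite[Theorem~3.5]{atiyah;elliptic-operators-compact-groups}, as recalled above for the untwisted case. For the statement about levels I would split $R\bigl(G^{(\sigma)}\bigr)=\bigoplus_k R(G,k\sigma)$ using Lemma~\ref{lemma;twist-module}\eqref{item;isotype} and split $R\bigl(H^{(\sigma,\tau)}\bigr)$ according to its two central circles. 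Since $U$, and hence $\ker D$ and $\ker D^*$, has $\tau$-weight $1$, the map $\index(D)$ annihilates every summand of $R\bigl(H^{(\sigma,\tau)}\bigr)$ on which the $\tau$-circle does not act with weight $1$, and $R(H,\tau-\sigma)$ is one of the surviving summands. Finally, for $[V]\in R(H,\tau-\sigma)$, tracking the $\sigma$-weights through the construction above shows $\ker D_V$ and $\ker D_V^*$ are $G^{(\sigma)}$-modules of level $1$, so that by \eqref{item;elliptic} we get $\index(D)[V]=\index(D_V)\in R(G,\sigma)$; since such classes $[V]$ span $R(H,\tau-\sigma)$ this proves \eqref{item;twist}.

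The analytic content here — that $D_V$ is a well-defined elliptic operator whose index computes the pairing with $[V]$ — is essentially Atiyah's induced-operator construction and should go through routinely. The step I expect to be the real nuisance is the bookkeeping of central characters underlying \eqref{item;twist} (and the check that $E_V$ is well defined): one must keep straight the three circles — the two inside $H^{(\sigma,\tau)}$ and the one in the left-acting copy of $G^{(\sigma)}$ — together with the sign flip built into the extension $-\sigma$, so as to be sure the levels line up as claimed and that $\index(D)$ lands in $R(G,\sigma)$ rather than in some other summand $R(G,k\sigma)$.
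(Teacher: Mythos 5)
Your proposal is correct and follows essentially the same route as the paper's proof: identifying $\Gamma(M,E_V^j)$ with the $H^{(\sigma,\tau)}$-invariants of $V^*\otimes\Gamma(G^{(\sigma)},G^{(\sigma)}\times U^j)$, descending $\id_{V^*}\otimes D$ to get $D_V$, deducing ellipticity from the transversely elliptic symbol on $G^{(\sigma)}\times\m$, identifying $\ker D_V$ and $\ker D_V^*$ with the relevant $\Hom$-spaces, and settling the level claim via the argument of Lemma~\ref{lemma;sections} and Atiyah's multiplicativity theorem. The only cosmetic differences are that you prove \eqref{item;elliptic} before \eqref{item;twist} (the paper does the reverse, via formula \eqref{equation;family-index}) and that you handle the adjoint directly with invariant metrics where the paper reduces to irreducible $V$ and uses additivity.
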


\begin{proof}
The $R\bigl(G^{(\sigma)}\bigr)$-linearity is a special case of the
multiplicativity property of the index,
\cite[Theorem~3.5]{atiyah;elliptic-operators-compact-groups}.  (Take
$X=\pt$ and $Y=G$ in the statement of that theorem.)  Let $V$ be an
$H^{(\tau-\sigma)}$-module of level $1$.  Then for $p=0$, $1$ the
tensor product $U^p\otimes V^*$ is an $H^{(\sigma)}$-module of level
$1$, and the bundle $G^{(\sigma)}\times(U^p\otimes V^*)$ over
$G^{(\sigma)}$ is $G^{(\sigma)}\times H^{(\sigma)}$-equivariant.  This
shows that the vector bundle $E_V^p$ is well-defined.  It follows from
\eqref{equation;pairing} that
\begin{equation}\label{equation;family-index}
\langle\index(D),[V]\rangle
=\bigl[\Hom_{H^{(\sigma,\tau)}}(V,\ker(D))\bigr]
-\bigl[\Hom_{H^{(\sigma,\tau)}}(V,\ker(D^*))\bigr]\in
R\bigl(G^{(\sigma)}\bigr).
\end{equation}
Now $\Hom_{H^{(\sigma,\tau)}}(V,\ker(D))\cong
(V^*\otimes\ker(D))^{H^{(\sigma,\tau)}}$ is a $G^{(\sigma)}$-submodule
of
\begin{equation}\label{equation;sections}
\bigl(V^*\otimes\Gamma\bigl(G^{(\sigma)},G^{(\sigma)}\times
U^0\bigr)\bigr)^{H^{(\sigma,\tau)}}\cong
\Gamma\bigl(G^{(\sigma)},G^{(\sigma)}\times(U^0\otimes
V^*)\bigr)^{H^{(\sigma,\tau)}}\cong\Gamma(M,E^0),
\end{equation}
and is therefore of level $1$ by Lemma \ref{lemma;sections}.
Similarly, $\Hom_{H^{(\sigma,\tau)}}(V,\ker(D^*))$ is isomorphic to a
$G^{(\sigma)}$-submodule of $\Gamma(M,E^1)$ and so is of level $1$.
It now follows from \eqref{equation;family-index} that
$\langle\index(D),[V]\rangle\in R(G,\sigma)$, which proves
\eqref{item;twist}.  The operator
$$
\id_{V^*}\otimes D\colon
V^*\otimes\Gamma\bigl(G^{(\sigma)},G^{(\sigma)}\times U^0\bigr)\longto
V^*\otimes\Gamma\bigl(G^{(\sigma)},G^{(\sigma)}\times U^1\bigr)
$$
is $H^{(\sigma,\tau)}$-equivariant.  Taking
$H^{(\sigma,\tau)}$-invariants on both sides and composing with the
natural isomorphism \eqref{equation;sections} we get an operator
$D_V\colon\Gamma\bigl(M,E_V^0\bigr)\longto\Gamma\bigl(M,E_V^1\bigr)$,
which is a $G^{(\sigma)}$-equivariant differential operator because
$D$ is.  The principal symbol of $D$ is a morphism of
$G^{(\sigma)}\times H^{(\sigma,\tau)}$-equivariant bundles over
$T_{H^{(\sigma,\tau)}}^*G^{(\sigma)}=G^{(\sigma)}\times\m$,
\begin{equation}\label{equation;symbol}
\sym(D)\colon G^{(\sigma)}\times\m\times U^0\longto
G^{(\sigma)}\times\m\times U^1,
\end{equation}
which is an isomorphism off the zero section
$G^{(\sigma)}\times\m\times\{0\}$.  This transversely elliptic symbol
induces an elliptic symbol
$$
\bigl(G^{(\sigma)}\times\m\bigr)\times^{H^{(\sigma)}}(U^0\otimes
V^*)\longto
\bigl(G^{(\sigma)}\times\m\bigr)\times^{H^{(\sigma)}}(U^1\otimes V^*)
$$
on $T^*M=\bigl(G^{(\sigma)}\times\m\bigr)\big/H^{(\sigma)}$, which is
the symbol of $D_V$.  Hence $D_V$ is elliptic.  If $V$ is irreducible,
then under the isomorphism \eqref{equation;sections} the kernel of
$D_V$ corresponds to the $V$-isotypical subspace of $\ker(D)$.
Similarly, $\ker(D_V^*)$ corresponds to the $V$-isotypical subspace of
$\ker(D^*)$.  Therefore, by \eqref{equation;family-index},
$\index(D_V)=\langle\index(D),[V]\rangle$ for irreducible $V$.  The
index of $D_V$ is additive with respect to $V$, so we conclude that
$\index(D_V)=\langle\index(D),[V]\rangle$ for general $V$.
\end{proof}

\begin{definition}\label{definition;elliptic-induction}
Let $D$ be a $(\sigma,\tau)$-twisted operator.  The \emph{associated
induction map} is the map
\glossary{iD@$i_D$, induction defined by operator $D$}
$$i_D\in\Hom_{R(G)}\bigl(R(H,\sigma-\tau),R(G,\sigma)\bigr)$$
defined by $i_D([V])=\index(D_{V^*})=\langle\index(D),[V^*]\rangle$.
\end{definition}

Let $BM$%
\glossary{BM@$BM$, unit ball bundle of cotangent bundle $T^*M$}
be the unit ball bundle and $SM$%
\glossary{SM@$SM$, unit sphere bundle of cotangent bundle $T^*M$}
the unit sphere bundle of the cotangent bundle $T^*M$.  The principal
symbol \eqref{equation;symbol} of a $(\sigma,\tau)$-twisted operator
$D$ defines a class $[D]$ in the twisted relative K-group of the pair
$(BM,SM)$ called the \emph{symbol class},
\glossary{D@$[D]$, symbol class of operator $D$}
\begin{equation}\label{equation;symbol-class}
[D]\in K_{G^{(\sigma)}}^0(BM,SM,\tau).
\end{equation}
(The relative twisted K-group occurring here is explained in Appendix
\ref{subsection;relative}.)  Let $\zeta\colon M\to T^*M$%
\glossary{zeta@$\zeta$, zero section $M\to T^*M$}
be the zero section of $T^*M$.  We will call the restriction of the
symbol class to the zero section,%
\glossary{eD@$\e(D)$, Euler class of operator $D$}
\begin{equation}\label{equation;euler}
\e(D)=\zeta^*([D])=[U^0]-[U^1]\in K_{G^{(\sigma)}}^0(M,\tau)\cong
K_{H^{(\sigma)}}^0(\pt,\tau)\cong R\bigl(H^{(\sigma)},\tau\bigr),
\end{equation}
the \emph{($G$-equivariant) Euler class} of $D$.  Note that the Euler
class depends only on the coefficient module $U$ of $D$.  (If $D$ is
the twisted $\Spin^c$ Dirac operator on $M$, which is defined in
\S\,\ref{subsection;thom}, then $\e(D)$ is the usual equivariant Euler
class of $M$.  The isomorphisms in \eqref{equation;euler} are
explained in Appendix \ref{subsection;bundle}, Examples
\ref{example;equivariant-extension}--\ref{example;induced-gerbe}.)

The following statement, which summarizes and extends to the twisted
case results of Bott \cite{bott;homogeneous-differential}, says that
an induction map defined by a twisted operator depends only on the
Euler class.  Moreover, all such induction maps vanish if the subgroup
is not of maximal rank.  The situation in the maximal rank case is
diametrically opposite and is described in Theorem
\ref{theorem;dirac-induction} below.  For simplicity we denote the
inclusion $H^{(\sigma)}\to G^{(\sigma)}$ by $i$ and the formal
induction map $R\bigl(H^{(\sigma)}\bigl)\to
R\bigl(G^{(\sigma)}\bigr)^*$ by $i_!$.  It follows from Lemma
\ref{lemma;sections} that $i_!$ maps $R(H,\sigma)$ to $R(G,\sigma)^*$.

\begin{theorem}\label{theorem;linear}
Let $D$ be a $(\sigma,\tau)$-twisted operator on $M$ and let
$i_D\colon R(H,\sigma-\tau)\to R(G,\sigma)$ be the associated
induction map.
\begin{enumerate}
\item\label{item;push}
$i_D(a)=i_!(\e(D)a)$ for all $a\in R(H,\sigma-\tau)$.
\item\label{item;zero}
$i_D=0$ if $H$ does not contain a maximal torus of $G$.
\item\label{item;zero-twist}
If $\tau=0$, then $i_D(1)=i_!(\e(D))$ and $i_Di^*(b)=i_D(1)b$ for all
$b\in R(G,\sigma)$.
\end{enumerate}
\end{theorem}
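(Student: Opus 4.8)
The plan is to prove the three statements in sequence, with (i) doing most of the work and (ii)--(iii) following by short arguments. For (i), I would first reduce to the case where $V$ is an $H^{(\sigma-\tau)}$-module of level $1$ and $a=[V]$, using $R(G)$-bilinearity and the fact that such classes generate $R(H,\sigma-\tau)$. By Definition \ref{definition;elliptic-induction}, $i_D([V])=\index(D_{V^*})$, and by Lemma \ref{lemma;elliptic}\eqref{item;elliptic} the operator $D_{V^*}$ is the $G^{(\sigma)}$-equivariant elliptic operator on $M$ with symbol coming from the transversely elliptic symbol \eqref{equation;symbol} tensored with $V$; its coefficient bundle is $E_{V^*}=G^{(\sigma)}\times^{H^{(\sigma)}}(U\otimes V)$. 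The key point is that $\index(D_{V^*})$ depends only on the K-theory symbol class of $D_{V^*}$, and since $M=G^{(\sigma)}/H^{(\sigma)}$ is homogeneous, the symbol class on $T^*M$ is constrained: following Bott \cite[\S\S\,3--4]{bott;homogeneous-differential}, the symbol class of any $G^{(\sigma)}$-equivariant elliptic operator on the homogeneous space $M$ is determined up to the class in $K^0_{G^{(\sigma)}}(BM,SM,\tau)$ represented by the ``zero symbol,'' i.e.\ one is free to replace $D_{V^*}$ by any operator with the same coefficient bundle without changing the index modulo the ideal killed by restriction to the base. Concretely, the symbol \eqref{equation;symbol-class} differs from a multiple of the Bott class (the symbol of the $\Spin^c$ Dirac operator) only by a factor pulled back from $M$; computing the index then gives $\index(D_{V^*})=i_!(\zeta^*[D]\cdot[V^*]^{\vee})$, where $\zeta^*[D]=\e(D)$ by \eqref{equation;euler}. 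Matching this against $i_!(\e(D)a)$ with $a=[V]$ finishes (i).

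For (ii), assume $H$ does not contain a maximal torus of $G$, so $\rank H<\rank G$. Then the Euler class $\e(D)\in R(H^{(\sigma)},\tau)$ is a multiple of the equivariant Euler class of the $H$-module $\m=\g/\h$ (twisted appropriately); but when $\rank H<\rank G$ the isotropy representation $\eta\colon H\to\GL(\m)$ has the property that $\m$ contains a trivial $T$-subrepresentation (the roots of $G$ not in $H$ are nonzero on the center of $H$ but... ), and more to the point, the $G$-equivariant Euler class of $TM$ restricted over the $T$-fixed points of $M$ vanishes because $M^T=\emptyset$ in this case. Hence $i_!(\e(D)a)=0$ for all $a$ by the localization/restriction argument: $i_!$ factors through restriction to $T$, and $\e(D)$ restricted to $R(T,\cdots)$ is zero since $TM$ has no $T$-fixed points. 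Combined with (i), this gives $i_D=0$. (Alternatively, this is exactly Bott's vanishing theorem \cite[\S\,4]{bott;homogeneous-differential} extended to the twisted setting via Lemma \ref{lemma;elliptic}.)

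For (iii), set $\tau=0$. The first equation $i_D(1)=i_!(\e(D))$ is the special case $a=1$ of (i). For the second, note that when $\tau=0$ the operator $D$ is genuinely $G^{(\sigma)}$-equivariant on $M$, so $i_D=i_D(1)\cdot(-)$ composed appropriately; more precisely, for $b\in R(G,\sigma)$ we have $i^*(b)\in R(H,\sigma)$ and $i_D(i^*(b))=i_!(\e(D)\,i^*(b))=i_!(i^*(b)\,\e(D))=b\,i_!(\e(D))=b\,i_D(1)$, where the third equality is the $R(G)$-linearity of formal induction $i_!$ recorded in \S\,\ref{subsection;formal} (it holds in the twisted setting because $i_!$ maps $R(H,\sigma)$ to $R(G,\sigma)^*$ and intertwines the module structures, by Lemma \ref{lemma;sections}). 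This is just the projection formula.

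The main obstacle I anticipate is step (i): carefully justifying that the index of the homogeneous elliptic operator $D_{V^*}$ depends only on its coefficient bundle (equivalently, on $\zeta^*[D]$) and not on the choice of symbol, in the \emph{twisted} equivariant setting. In the untwisted case this is Bott's theorem, proved by an explicit reduction of the symbol on $T^*M\cong G\times^H\m$ to one supported near the zero section; one must check that every step of that reduction is compatible with the $H^{(\sigma,\tau)}$-equivariance and the level-$1$ condition, which is where the bookkeeping with the iterated extensions $H^{(\sigma,\tau)}=(H^{(\sigma)})^{(\tau)}$ gets delicate. Everything else is formal manipulation with the identities already established in Lemmas \ref{lemma;twist-module}, \ref{lemma;sections}, and \ref{lemma;elliptic}.
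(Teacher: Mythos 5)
Your overall strategy coincides with the paper's: reduce part (i) to Bott's Theorem I applied to the descended operator $D_{V^*}$ of Lemma \ref{lemma;elliptic}\eqref{item;elliptic}, obtain (ii) from a vanishing/fixed-point theorem, and get (iii) formally from (i) and linearity. Two points need repair.

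First, in (i) the essential observation is simply that $D_{V^*}$ is an honest \emph{untwisted} $G^{(\sigma)}$-equivariant elliptic operator on $M$ with coefficient module $U\otimes V$, so Bott's Theorem I applies verbatim and gives $\index(D_{V^*})=i_!\bigl([U^0\otimes V]-[U^1\otimes V]\bigr)=i_!(\e(D)\cdot[V])$. The worry you raise at the end, about rechecking Bott's symbol reduction against the $H^{(\sigma,\tau)}$-equivariance and the level condition, is unnecessary: that bookkeeping is already absorbed into Lemma \ref{lemma;elliptic}\eqref{item;elliptic}, and the operator to which Bott's theorem is applied carries no residual twist. Relatedly, the intermediate claims about the symbol class being determined ``modulo the ideal killed by restriction to the base,'' and the formula $i_!(\zeta^*[D]\cdot[V^*]^\vee)$, are not correct as stated (Bott's theorem is an exact equality of the index with $i_!$ of the Euler class, and the element being induced is $\e(D)\cdot[V]$); these should be replaced by the direct citation.

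Second, your self-contained argument for (ii) contains a false step: the Euler class $\e(D)=[U^0]-[U^1]$ is \emph{not} in general a multiple of the Euler class of $\m$, and its restriction to the maximal torus is certainly not zero --- what vanishes is the index, not the class being induced, so the assertion that ``$\e(D)$ restricted to $R(T,\cdots)$ is zero'' cannot be the mechanism. The correct argument (and the one the paper uses) is the Atiyah--Bott Lefschetz fixed point formula: the character of the virtual module $i_D(a)$ at a generic element of $T^{(\sigma)}$ is a sum of local contributions over the fixed points of that element on $M$, and when $H$ contains no maximal torus of $G$ the fixed point set is empty, so the character vanishes identically and $i_D(a)=0$. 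Note also that at this stage of the paper $H$ is not assumed connected, so one cannot simply quote Bott's Theorem II; the fixed-point argument is what covers the general case. Part (iii) of your proposal is fine.
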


\begin{proof}
Let $V$ be an $H^{(\sigma-\tau)}$-module of level $1$.  The Euler
class of the operator $D_{V^*}$ defined in Lemma
\ref{lemma;elliptic}\eqref{item;elliptic} is
$$
\e({D_{V^*}})=[U^0\otimes V]-[U^1\otimes V]=\e(D)\cdot[V]\in
R(H,\sigma).
$$
By \cite[Theorem~I]{bott;homogeneous-differential}, the element
$i_!(\e({D_{V^*}}))\in R(G,\sigma)^*$ is in $R(G,\sigma)$, and the
index of $D_{V^*}$ is equal to $i_!(\e({D_{V^*}}))$.  Therefore, by
Lemma \ref{lemma;elliptic}\eqref{item;elliptic},
$$
i_D([V])=\index(D_{V^*})=i_!(\e(D)\cdot[V]),
$$
which proves \eqref{item;push}.  \eqref{item;zero} follows from
\cite[Theorem~II]{bott;homogeneous-differential} if $H$ is connected.
In the general case we argue as in
\cite[\S\,2]{segal;representation-ring}: the character of the virtual
$G^{(\sigma)}$-module $i_D(a)$ is given by the Lefschetz fixed point
formula of Atiyah and Bott
\cite{atiyah-bott;lefschetz-fixed-elliptic-complex}, but if $H$ does
not contain a maximal torus of $G$, then $H^{(\sigma)}$ does not
contain a maximal torus of $G^{(\sigma)}$, so $T^{(\sigma)}$ acts on
$M$ without fixed points, so $i_D(a)=0$.  Finally,
\eqref{item;zero-twist} follows immediately from \eqref{item;push} and
the $R(G)$-linearity of $i_D$ (Lemma
\ref{lemma;elliptic}\eqref{item;twist}).
\end{proof}

\subsection{Twisted Dirac operators}\label{subsection;dirac}

The operators of most interest to us will be twisted versions of
Atiyah and Singer's generalized Dirac operators.  We state the
definition here and we will compute the Euler class of some twisted
Dirac operators in
Lemma \ref{lemma;weight-euler}\eqref{item;euler-twist} and Proposition
\ref{proposition;principal-chern-euler}.

Let $X$ be an oriented $G$-manifold equipped with an invariant
Riemannian metric and let%
\glossary{Cl@$\Cl(E)$, Clifford algebra of a vector space or vector
bundle $E$}
$\Cl(X)=\Cl(T^*X)$ be the Clifford bundle of
the cotangent bundle $T^*X$.  Let $\pr\colon P\to X$ be a
$G$-equivariant principal bundle over $X$ with structure group $H$.
Let $\tau$ be a central extension of $H$ by $\U(1)$ and let
$E=E^0\oplus E^1$ be a $\Z/2\Z$-graded $G\times
H^{(\tau)}$-equivariant vector bundle over $P$ on which the central
circle $\U(1)$ of $H^{(\tau)}$ acts by fibrewise scalar
multiplication.  (In the language of Appendix \ref{section;twist}, $E$
is a $G$-equivariant twisted vector bundle over $X$.)  Suppose also
that $E$ is a graded module for the bundle of Clifford algebras
$\pr^*\Cl(X)$ and is equipped with a connection $\nabla$.  Assume that
the module structure and the connection are $G\times
H^{(\tau)}$-equivariant.  Choose a $G$-invariant connection
$\theta\in\Omega^1(P,\h)$ on the $H$-bundle $P$ and let $p\colon
T^*P\to\pr^*T^*X$ be the associated projection.  Let
$\cliff\colon\pr^*T^*X\times E^0\to E^1$ the Clifford multiplication.
Form a first-order operator on $P$,
\begin{equation}\label{equation;twisted-dirac}
D\colon\Gamma(P,E^0)\overset\nabla\longto\Gamma(P,T^*P\otimes E^0)
\overset{p}\longto\Gamma(P,\pr^*T^*X\otimes
E^0)\overset{\cliff}\longto \Gamma(P,E^1).
\end{equation}
This is a $G\times H^{(\tau)}$-equivariant operator on $P$ which is
transversely elliptic relative to $H^{(\tau)}$.

Let $\sigma$ be a central extension of $G$ by $\U(1)$.  Replacing $G$
with $G^{(\sigma)}$ and $H^{(\tau)}$ with $H^{(\sigma,\tau)}$ in this
definition, we obtain a $G^{(\sigma)}\times
H^{(\sigma,\tau)}$-equivariant operator $D$ on $P$ which is
transversely elliptic relative to $H^{(\sigma,\tau)}$.  We refer to
$D$ as the \emph{twisted Dirac operator} determined by the equivariant
Clifford module $E$ and the connection $\theta$.  Its symbol defines a
class $[D]\in K_{G^{(\sigma)}}^0(BT^*X,ST^*X,\tau)$, which is
independent of the choice of $\theta$.

As a special case we take $X$ to be the homogeneous space
$M=G/H=G^{(\sigma)}/H^{(\sigma)}$ and $P$ to be the group
$G^{(\sigma)}$.  The Clifford bundle $\pr^*\Cl(M)$ is then the product
bundle $G^{(\sigma)}\times\Cl(\m)$.  For $E$ we take a product bundle
$E=G^{(\sigma)}\times U$, where $U$ is a $\Z/2\Z$-graded
$H^{(\sigma,\tau)}$-module of level~$1$.  To make $E$ an equivariant
$\pr^*\Cl(M)$ bundle we assume that $U$ is an $H$-equivariant
$\Cl(\m)$-module.  The orthogonal splitting $\g=\h\oplus\m$ defines a
connection $\theta\in\Omega^1\bigl(G^{(\sigma)},\h^{(\sigma)}\bigr)$
on the principal $H^{(\sigma)}$-bundle $G^{(\sigma)}\to M$.  A
connection $\nabla=d+\theta$ on the bundle $E$ is then defined by
$$\nabla_vf=df(v)+\theta(v)\cdot f$$
for all tangent vectors $v$ to $G^{(\sigma)}$ and smooth functions
$f\colon G^{(\sigma)}\to U$.  In this formula
$\theta(v)\in\h^{(\sigma)}$ acts on $f$ by combining the infinitesimal
right multiplication action on $G^{(\sigma)}$ with the action
$$
\h^{(\sigma)}\longto\h\longto\lie{o}(\m)\longto\lie{cl}(\m)
\longto\lie{gl}(U)
$$
on $U$.  (Here $\lie{cl}(\m)$ is the Lie algebra of the Clifford
group, i.e.\ the vector space $\Cl(\m)$ equipped with the commutator
bracket $[x,y]=xy-yx$.)  Thus the data $U$ and $\theta$ define a
twisted Dirac operator on $M$, which is a $(\sigma,\tau)$-twisted
equivariant elliptic differential operator in the sense of Definition
\ref{definition;twist-operator}.  By Theorem \ref{theorem;linear} the
induction map $i_D$ associated with $D$ depends only on the Euler
class $\e(D)=[U^0]-[U^1]$.

\subsection{The maximal rank case}\label{subsection;maximal}

In view of Theorem \ref{theorem;linear}\eqref{item;zero} we assume for
the remainder of the paper that the subgroup $H$ of $G$ contains $T$.
For simplicity we will also assume $H$ to be connected.  Then the
homogeneous space $M=G/H$ is simply connected (\cite[Ch.~IX,
\S\,2.4]{bourbaki;groupes-algebres}).  In this section we will
calculate the pushforward homomorphism defined by a twisted elliptic
operator on $M$ by means of the Lefschetz formula of Atiyah and Bott.

\subsubsection*{Notation}

We denote the various inclusion maps by%
\glossary{j_H@$j_H$, inclusion $T\to H$}
$$
T\overset{j_G}\longto G,\qquad T\overset{j_H}\longto
H\overset{i}\longto G.
$$
We let $G^{(\sigma)}$ be a central $\U(1)$-extension of $G$ and
$H^{(\tau)}$ a central $\U(1)$-extension of $H$.  Then we have a
pullback extension $H^{(\sigma)}$ of $H$ and two pullback extensions
$T^{(\sigma)}$ and $T^{(\tau)}$ of $T$, and corresponding inclusion
maps
$$
T^{(\sigma)}\overset{j_G}\longto G^{(\sigma)},\qquad
T^{(\sigma)}\overset{j_H}\longto H^{(\sigma)},\qquad
T^{(\tau)}\overset{j_H}\longto H^{(\tau)},\qquad
H^{(\sigma)}\overset{i}\longto G^{(\sigma)}.
$$

The root system of $G$ contains that of $H$.  We let $\ca{B}_H$ be the
unique basis of $\ca{R}_H$ such that the associated positive roots
$\ca{R}_H^+$ are positive for $G$.  We define%
\glossary{WH@$W^H$, shortest representatives for $W_G/W_H$}
$$W^H=\{\,w\in W_G\mid w(\ca{R}_H^+)\subset\ca{R}_G^+\,\}.$$
Then $W^H$ is a system of coset representatives for $W_G/W_H$.  The
set $\ca{R}_M=\ca{R}_G\setminus\ca{R}_H$%
\glossary{R_M@$\ca{R}_M$, weights $\ca{R}_G\setminus\ca{R}_H$ of
$\m_\C$}
(which is usually not a root system) is the set of weights of the
complexified $T$-module $\m_\C$.  We call the weights in
$\ca{R}_M^+=\ca{R}_G^+\setminus\ca{R}_H^+$%
\glossary{R_M+@$\ca{R}_M^+$, positive weights
$\ca{R}_G^+\setminus\ca{R}_H^+$ of $\m_\C$}
\emph{positive}; the set $\ca{R}_M$ is the disjoint union of
$\ca{R}_M^+$ and $-\ca{R}_M^+$.  The dimension of $M$ is the
cardinality of $\ca{R}_M$, which is even.  For any root $\alpha$ of
$G$, let $\g_\C^\alpha\subset\g_\C$%
\glossary{gfrakalpha@$\g_\C^\alpha$, root space of $\g_\C$}
be the root space corresponding to $\alpha$.  We have
$$
\m_\C=\bigoplus_{\alpha\in\ca{R}_M}\g_\C^\alpha,\qquad
\m=\bigoplus_{\alpha\in\ca{R}_M^+}\m^\alpha,
$$
where
$\m^\alpha=\m\cap\bigl(\g_\C^\alpha\oplus\g_\C^{-\alpha}\bigr)$.%
\glossary{malpha@$\m^\alpha$, weight space}
The projection $\m_\C\to\m$ which sends a vector to its real part
induces $T$-equivariant $\R$-linear isomorphisms
$\g_\C^\alpha\to\m^\alpha$ for all $\alpha\in\ca{R}_M^+$.  We endow
$\m$ with the $T$-invariant complex structure provided by this
isomorphism (which depends on the choice of the positive weights
$\ca{R}_M^+$).

\begin{theorem}\label{theorem;lefschetz}
Let $D$ be a $(\sigma,\tau)$-twisted operator on $M$ and let
$i_D\colon R(H,\sigma-\tau)\to R(G,\sigma)$ be the induction map
determined by $D$.  Then
$$
j_G^*i_D(a)=\sum_{w\in W^H}w\biggl(\frac{j_H^*(\e(D)a)}
{\prod_{\alpha\in\ca{R}_M}{(1-e^\alpha)}}\biggr)
$$
for all $a\in R(H,\sigma-\tau)$.
\end{theorem}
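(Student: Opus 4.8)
The plan is to pass to an ordinary elliptic operator on the compact manifold $M$ and to evaluate both sides as characters of virtual $T^{(\sigma)}$-modules by means of the Atiyah--Bott Lefschetz fixed point formula \cite{atiyah-bott;lefschetz-fixed-elliptic-complex}. Both sides are additive in $a$, and every element of $R(H,\sigma-\tau)$ is a difference of classes of level-one modules, so I may take $a=[V]$ for a level-one $H^{(\sigma-\tau)}$-module $V$. By Definition \ref{definition;elliptic-induction} and Lemma \ref{lemma;elliptic}\eqref{item;elliptic}, $i_D(a)=\index(D_{V^*})$ is the $G^{(\sigma)}$-equivariant index of the \emph{elliptic} operator $D_{V^*}$ on $M$, whose $\Z/2\Z$-graded coefficient bundle is $G^{(\sigma)}\times^{H^{(\sigma)}}(U\otimes V)$ and whose Euler class is $\e(D_{V^*})=\e(D)a\in R(H,\sigma)$. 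The left-hand side $j_G^*i_D(a)$ lies in $R(T,\sigma)$; each summand on the right-hand side is $W_H$-invariant, since $j_H^*(\e(D)a)\in R(T,\sigma)^{W_H}$ by Lemma \ref{lemma;twist-module}\eqref{item;extension-invariant} applied to $H$ and $\prod_{\alpha\in\ca{R}_M}(1-e^\alpha)$ is $W_H$-invariant, so the right-hand side is $W_G$-invariant, and once the identity is proved it follows that it too lies in $R(T,\sigma)$. An element of $R(T,\sigma)$ is determined by its character on the dense set of topologically cyclic elements of $T^{(\sigma)}$, so it suffices to prove the identity of characters at each such element.

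Fix a topologically cyclic $t\in T^{(\sigma)}$; its image $\bar t\in T$ is again topologically cyclic, hence a regular element, and acts on $M=G/H$ by left translation. The fixed-point set $\{gH\mid g^{-1}\bar t g\in H\}$ is finite: since $H$ is closed and contains the maximal torus $T$ of $G$, the relation $g^{-1}\bar t g\in H$ forces $g^{-1}Tg\subset H$, hence $g\in N_G(T)H$, so the fixed points are the cosets $w\bar1$, $w\in W^H$. Each $w\bar1$ is isolated, because the isotropy representation of $T$ on $T_{w\bar1}M$ is the $w$-conjugate of its action on $\m$, which has weights $w\ca{R}_M\subset\ca{R}_G$, none of which is trivial at the regular element $\bar t$. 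Identifying $T_{w\bar1}M$ with $\m$ by left translation by a lift of $w$ to $G^{(\sigma)}$, the $T^{(\sigma)}$-action on the fibre of the coefficient bundle of $D_{V^*}$ at $w\bar1$ becomes the $w$-conjugate of its action on $U\otimes V$, so the graded trace of $t$ on that fibre is $\bigl(w\,j_H^*(\e(D)a)\bigr)(t)$, while the real Jacobian is $\det_\R(1-t\mid T_{w\bar1}M)=\prod_{\alpha\in\ca{R}_M^+}(1-e^{w\alpha}(t))(1-e^{-w\alpha}(t))=\bigl(w\!\prod_{\alpha\in\ca{R}_M}(1-e^\alpha)\bigr)(t)$, using that each weight space $\m^\alpha$ is a real $2$-plane on which $t$ acts with character $e^\alpha$ and that $\ca{R}_M=\ca{R}_M^+\sqcup(-\ca{R}_M^+)$ is $w$-stable.

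Now I apply the Atiyah--Bott formula to $D_{V^*}$ and $t$: the character of $\index(D_{V^*})$ at $t$ is the sum over the isolated fixed points $w\bar1$ of the local contributions, and at an isolated fixed point the local contribution of an equivariant elliptic operator is the graded trace of the group element on the fibre of its coefficient bundle divided by the real Jacobian of the group element on the tangent space. By the computation of the previous paragraph the contribution at $w\bar1$ is therefore $\bigl(w(j_H^*(\e(D)a)/\prod_{\alpha\in\ca{R}_M}(1-e^\alpha))\bigr)(t)$. Summing over $w\in W^H$ and letting $t$ range over the topologically cyclic elements of $T^{(\sigma)}$ yields the asserted formula.

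I expect the two points requiring the most care to be, first, the identification of the fixed-point data — maximal rank is precisely what makes a regular element act on $M$ with isolated fixed points indexed by $W^H$ — and, second, pinning down the exact shape of the Atiyah--Bott local contribution, graded fibre trace over real Jacobian, once the orientation twist of $M$ implicit in $\e(D)$ and in the definition of $i_D$ is taken into account. This orientation bookkeeping is the point at which Bott's original treatment was incomplete: when $D$ is a twisted Dirac operator it amounts to the cancellation of the half-integral weights in the spinor fibre against those in the square root of $\det_\R(1-t\mid T_{w\bar1}M)$, and its correct handling is exactly what makes the \emph{symmetric} product $\prod_{\alpha\in\ca{R}_M}(1-e^\alpha)$ over all of $\ca{R}_M$, rather than a product over $\ca{R}_M^+$ alone, appear in the denominator.
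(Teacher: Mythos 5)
Your proposal is correct and follows essentially the same route as the paper's proof: pass to the elliptic operator $D_{V^*}$ on $M$, apply the Atiyah--Bott Lefschetz fixed point formula at a generic (topologically cyclic) element of the maximal torus, identify the isolated fixed points with $W^H$, and compute each local contribution as the graded fibre trace of $U\otimes V$ divided by $\prod_{\alpha\in\ca{R}_M}(1-e^{w(\alpha)})$. The only cosmetic difference is that the paper first treats $\sigma=0$ and then deduces the general case by replacing $G$, $H$ with $G^{(\sigma)}$, $H^{(\sigma)}$, whereas you run the argument directly on $G^{(\sigma)}/H^{(\sigma)}$; your observation that the real Jacobian equals the symmetric product over all of $\ca{R}_M$ (hence is nonnegative, so the absolute value in the Lefschetz formula is harmless) matches the paper's explicit positivity remark.
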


\begin{proof}
First assume that $\sigma=0$.  Then $D$ is a $G\times
H^{(\tau)}$-equivariant operator
$$D\colon\Gamma(G,G\times U^0)\to\Gamma(G,G\times U^1)$$
that is transversely elliptic with respect to $H^{(\tau)}$, where
$U^0$ and $U^1$ are $H^{(\tau)}$-modules of level $1$.  Choose a
representative $\dot{w}\in N_G(T)$ for each $w\in W^H$.  The map
$W^H\to M^T$ defined by $w\mapsto\dot{w}H/H$ is a bijection.  Let $V$
be an $H^{(\tau)}$-module of level $-1$ and let $\chi_V\colon
H^{(\tau)}\to\C$ be its character.  Let $\chi$ be the character of the
virtual $G$-module $i_D([V])=\index(D_{V^*})$.  Let $t$ be a generic
element of $T$.  By
\cite[\S\,II.5]{atiyah-bott;lefschetz-fixed-elliptic-complex},
$\chi(t)=\sum_{w\in W^H}\chi_w(t)$, where
$$
\chi_w(t)=\frac{\trace\bigl(\kappa^0(w^{-1}(t))\bigr)
-\trace\bigl(\kappa^1(w^{-1}(t))\bigr)}
{\bigabs{\det_\R\bigl(1-\eta(w^{-1}(t^{-1}))\bigr)}},
$$
and $\kappa^p$ is the representation $H\to\GL(U^p\otimes V)$ for $p=0$
or $1$.  We have
$$
\trace\bigl(\kappa^p(w^{-1}(t))\bigr)
=\chi_{U^p}\bigl(w^{-1}(t)\bigr)\chi_V\bigl(w^{-1}(t)\bigr),
$$
and arguing as in
\cite[\S\,II.5]{atiyah-bott;lefschetz-fixed-elliptic-complex} we find
$$
\det_\R\bigl(1-\eta(w^{-1}(t^{-1}))\bigr) =\prod_{\alpha\in\ca{R}_M^+}
\bigl(1-t^{w(\alpha)}\bigr)\bigl(1-t^{-w(\alpha)}\bigr)>0.
$$
Therefore
$$
\chi(t)=\sum_{w\in W^H}\frac{\bigl[\chi_{U^0}\bigl(w^{-1}(t)\bigr)
-\chi_{U^1}\bigl(w^{-1}(t)\bigr)\bigr]\chi_V\bigl(e^{w^{-1}(\xi)}\bigr)}
{\prod_{\alpha\in\ca{R}_M}\bigl(1-t^{w(\alpha)}\bigr)}.
$$
This identity, valid for generic $t$, amounts to an identity in the
fraction field of the representation ring $R(T)$, namely
$$
j_G^*i_D([V])=\sum_{w\in
W^H}\frac{wj_H^*\bigl(([U^0]-[U^1])\cdot[V]\bigr)}
{\prod_{\alpha\in\ca{R}_M}\bigl(1-e^{w(\alpha)}\bigr)},
$$
from which the assertion follows immediately.  The case of a general
extension $\sigma$ is handled by replacing $G$ with $G^{(\sigma)}$,
$H$ with $H^{(\sigma)}$, and $H^{(\tau)}$ with
$H^{(\sigma)}\times_HH^{(\tau)}$.
\end{proof}

The following example is well-known; see e.g.\
\cite[\S\,2]{segal;representation-ring}.

\begin{example}\label{example;euler}
Let $\sigma=\tau=0$.  The exterior algebra $\Lambda(\m_\C)$ is a
$\Z/2\Z$-graded $H$-equivariant $\Cl(\m)$-module.  The corresponding
Dirac operator defined with respect to the Levi-Civita connection (see
\S\,\ref{subsection;dirac}) is the Hodge-de Rham operator
$$
D=d+d^*\colon\Gamma\bigl(M,\Lambda^\even(T_\C^*M)\bigr)\to
\Gamma\bigl(M,\Lambda^\odd(T_\C^*M)\bigr),
$$
where $d^*$ is the adjoint of the exterior derivative $d$ with respect
to the Riemannian metric on $M$.  Also,
$\e(D)=\zeta^*([\sigma(D)])=\lambda_{-1}([\m_\C])$, and hence
$$
j_H^*(\e(D))=\lambda_{-1}(j_H^*([\m_\C]))=\prod_{\alpha\in\ca{R}_M^+}
(1-e^\alpha)(1-e^{-\alpha}).
$$
This expression cancels against the denominator in Theorem
\ref{theorem;lefschetz} and therefore
$$
j_G^*i_D(a)=\sum_{w\in W^H}w(j_H^*(a))
$$
for all $a\in R(H)$.  In particular
$$i_!(\e(D))=i_D(1)=\sum_{w\in W^H}1=\abs{W_G/W_H}$$
is the Euler characteristic of $M$.
\end{example}

\section{Twisted $\Spin^c$-induction: the character formula}
\label{section;dirac}

Among all twisted equivariant elliptic differential operators on a
maximal-rank homogeneous space $M=G/H$ there is a preferred one, the
\emph{twisted $\Spin^c$ Dirac operator} $\Dirac_M=\Dirac$,%
\glossary{DH@$\Dirac$, twisted $\Spin^c$ Dirac operator}
defined in \eqref{equation;dirac} below.  (The definition easily
extends to give a natural twisted $\Spin^c$ Dirac operator on every
oriented Riemannian manifold, which is closely analogous to the
twisted $\Spin$ Dirac operator of Murray and Singer
\cite[\S~3.3]{murray-singer;gerbes-index}.)  Thanks to the Thom
isomorphism every elliptic induction map can be expressed in terms of
$\Dirac$ (Theorem \ref{theorem;induction} below).  In this section we
develop the properties of twisted $\Spin^c$-induction up to the ``Weyl
character formula'' (Theorem \ref{theorem;weyl-character}).

\subsubsection*{Notation}

We retain the hypotheses and the notational conventions stated at the
beginning of \S\,\ref{section;induction} and
\S\,\ref{subsection;maximal}.  In addition we define%
\glossary{rhoM@$\rho_M$, half-character $\rho_G-\rho_H$}
$$
\rho_M=\rho_G-\rho_H=\frac12\sum_{\alpha\in\ca{R}_M^+}\alpha
\in\frac12\X(T).
$$
%

\subsection{The Thom isomorphism}\label{subsection;thom}

Let
$$\Spin^c(\m)=(\U(1)\times\Spin(\m))/K$$
be the complex spinor group of $\m=T_{\bar1}M$.  Here $K\cong\Z/2\Z$
denotes the central subgroup $\{(1,1),(-1,x)\}$ of
$\U(1)\times\Spin(\m)$, with $x$ being the nontrivial element in the
kernel of the double cover $\Spin(\m)\to\SO(\m)$.  The central
extension
\begin{equation}\label{equation;spin-c}
1\longto\U(1)\longto\Spin^c(\m)\longto\SO(\m)\longto1
\end{equation}
pulls back via the tangent representation $\eta\colon H\to\SO(\m)$ to
a central extension%
\glossary{omegaM@$\omega_M$, orientation system of $M$}
\begin{equation}\label{equation;orientation}
\omega=\omega_M\colon1\longto\U(1)\longto H\times_{\SO(\m)}\Spin^c(\m)
\longto H\longto1,
\end{equation}
which we call the \emph{orientation system} of $M$.  (See Appendix
\ref{section;twist}, Example \ref{example;equivariant-orientation}.
The orientation system is trivial if and only if $\eta$ lifts to a
representation $H\to\Spin^c(\m)$, which is equivalent to $M$ having a
$G$-invariant $\Spin^c$-structure, i.e.\ an orientation in equivariant
K-theory.  See Appendix \ref{section;spin} for a discussion of
$\Spin^c$-structures on $M$.)

Let us write elements of $\Spin^c(\m)$ as equivalence classes $[z,g]$
with $z\in\U(1)$ and $g\in\Spin(\m)$.  The involution
$f([z,g])=[z^{-1},g]$ of $\Spin^c(\m)$ restricts to the identity on
the subgroup $\Spin(\m)$ and therefore induces the identity on
$\SO(\m)$.  The involution $\eta^*(f)$ of $H^{(\omega)}$ therefore
defines an equivalence between the orientation system $\omega_M$ and
its opposite, and hence an isomorphism of $R(H)$-modules
$$R(H,\omega_M)\cong R(H,-\omega_M),$$
which we will use to identify these two modules.  (See Remark
\ref{remark;ext}.)

\begin{remark}\label{remark;stiefel-whitney}
Let $\Ext(G,\U(1))$ be the group of equivalence classes of central
extensions of $G$ by $\U(1)$.  There is an isomorphism of abelian
groups
$$
\Ext(G,\U(1))\cong H_G^3(\pt,\Z).
$$
(See \cite[\S\,6]{atiyah-segal;twisted-k-theory} or
\cite[\S\,2.3]{tu-xu-laurent-gengoux;twisted-differentiable-stacks}.)
Under the isomorphism $H_G^3(M,\Z)\cong H_H^3(\pt,\Z)$, the class of
$\omega_M$ in $\Ext(H,\U(1))\cong H_H^3(\pt,\Z)$ corresponds to the
integral equivariant Stiefel-Whitney class $W_G^3(M)\in H_G^3(M,\Z)$,
which is a $2$-torsion element.  If $G$ is simply connected, the
forgetful map $H_G^3(M,\Z)\to H^3(M,\Z)$ is injective (see
\cite[Lemma~3.3]{krepski;pre-quantization-flat}), so $M$ is $\Spin^c$
if and only if it is $G$-invariantly $\Spin^c$.
\end{remark}

Let $S=S^0\oplus S^1$%
\glossary{S@$S$, spinor module $S^0\oplus S^1$ of $\Cl(\m)$}
be the spinor module of the Clifford algebra $\Cl(\m)$.  This is a
$\Z/2\Z$-graded $H^{(\omega)}$-equivariant Clifford module, and the
$H^{(\omega)}$-action is of level $1$.  The associated Dirac operator
\begin{equation}\label{equation;dirac}
\Dirac=\Dirac_M\colon\Gamma(G^{(\sigma)},G^{(\sigma)}\times
S^0)\longto\Gamma(G^{(\sigma)},G^{(\sigma)}\times S^1)
\end{equation}
is a $(\sigma,\omega)$-twisted equivariant elliptic operator (see
\S\,\ref{subsection;dirac}), which we refer to as the \emph{twisted
$\Spin^c$ Dirac operator} on $M$.  We denote the corresponding
induction map by%
\glossary{i*@$i_*$, twisted $\Spin^c$-induction}
$$i_*=i_\Dirac\colon R(H,\sigma+\omega_M)\longto R(G,\sigma).$$

Let $\pi\colon T^*M\to M$%
\glossary{pi@$\pi$, projection $T^*M\to M$}
be the cotangent bundle projection.  The Thom isomorphism theorem,
Theorem \ref{theorem;thom} in Appendix \ref{subsection;isomorphism},
states that for every central $\U(1)$-extension $\tau$ of $H$ the map
$$
\zeta_*\colon R\bigl(H^{(\sigma)},\tau\bigr)\cong
K_{G^{(\sigma)}}^*(M,\tau)\longto
K_{G^{(\sigma)}}^*\bigl(BM,SM,\pi^*(\tau+\omega_M)\bigr)
$$
defined by $\zeta_*(a)=\pi^*(a)\th(M)$ is an isomorphism of
$R(H)$-modules.  We denote the inverse of $\zeta_*$ by $\pi_*$.  Here
$\th(M)$ is the Thom class of $M$, which by \eqref{equation;thom} is
equal to the symbol class of the operator $\Dirac$.

It was observed by Atiyah and Singer that the Thom isomorphism theorem
reduces the index theorem for general elliptic operators to the case
of the Dirac operator.  In exactly the same way the Thom isomorphism
enables us to express every induction map defined by a twisted
elliptic operator in terms of twisted $\Spin^c$-induction.

\begin{theorem}\label{theorem;induction}
Let $D$ be a $(\sigma,\tau)$-twisted operator on $M$.  There is a
unique element $a_D\in R\bigl(H^{(\sigma)},\tau+\omega_M\bigr)$ such
that $\e(D)=a_D\e(\Dirac)$, namely $a_D=\pi_*([D])$.  This element has
the property that $i_D(a)=i_*(a_Da)$ for all $a\in R(H,\sigma-\tau)$.
\end{theorem}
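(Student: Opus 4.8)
The plan is to establish the claim in three pieces: existence and uniqueness of $a_D$, the explicit identification $a_D=\pi_*([D])$, and the identity $i_D(a)=i_*(a_Da)$.

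First I would prove existence and uniqueness of $a_D$. By Theorem \ref{theorem;linear}\eqref{item;push} the induction map $i_D$ depends only on the Euler class $\e(D)\in R(H^{(\sigma)},\tau)$, and by \eqref{equation;euler} this is $[U^0]-[U^1]$. The Euler class $\e(\Dirac)=[S^0]-[S^1]=\lambda_{-1}([\m_\C])$ (cf.\ Example \ref{example;euler}); as the top exterior-power-type class of the spinor module, $\e(\Dirac)$ is a non-zero-divisor in $R(H^{(\sigma)},\omega_M)$, since after restriction to $T^{(\sigma)}$ (where $R(G,\sigma)\hookrightarrow R(T,\sigma)$ by Lemma \ref{lemma;twist-module}\eqref{item;extension-invariant}, applied to the relevant extensions) it becomes the product $\prod_{\alpha\in\ca{R}_M^+}(e^{\alpha/2}-e^{-\alpha/2})$ up to a sign, a nonzero element of the domain $R(T,\sigma+\omega_M)$, which embeds in the zero-divisor-free ring $R$ of the connected group governing the total extension (Lemma \ref{lemma;twist-module}\eqref{item;extension-sum}). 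Hence if $a_D$ exists it is unique. For existence, I would invoke the Thom isomorphism quoted just above the theorem: $\pi_*$ is the inverse of $\zeta_*(a)=\pi^*(a)\,\th(M)$, and $\th(M)$ is the symbol class $[\Dirac]$ by \eqref{equation;thom}. Applying $\zeta^*$ (restriction to the zero section) to the tautological equation $[D]=\zeta_*(a_D)=\pi^*(a_D)[\Dirac]$ with $a_D:=\pi_*([D])$ gives $\e(D)=\zeta^*[D]=a_D\cdot\zeta^*[\Dirac]=a_D\,\e(\Dirac)$, using that $\zeta^*\pi^*=\id$ and that $\zeta^*$ is a module map. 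This simultaneously proves existence and the identification $a_D=\pi_*([D])$.

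For the final identity, the clean route is through Theorem \ref{theorem;linear}\eqref{item;push}: both $i_D(a)$ and $i_*(a_Da)$ are computed by formal induction $i_!$ of a product of Euler classes. Explicitly, $i_D(a)=i_!(\e(D)\,a)=i_!(a_D\,\e(\Dirac)\,a)$ by the previous paragraph, while $i_*(a_Da)=i_\Dirac(a_Da)=i_!(\e(\Dirac)\,a_D\,a)$ again by \eqref{item;push} applied to the operator $\Dirac$. These two are manifestly equal since $R(H^{(\sigma)})$-multiplication is commutative. One must only check that the degrees/levels match: $a\in R(H,\sigma-\tau)$, $a_D\in R(H^{(\sigma)},\tau+\omega_M)$, so $a_D a\in R(H,\sigma+\omega_M)$, which is exactly the domain of $i_*=i_\Dirac$; and $\e(\Dirac)a_D a\in R(H,\sigma)$ sits in the domain of $i_!$, consistent with $i_D(a)\in R(G,\sigma)$.

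The main obstacle I anticipate is not the algebra but making the Thom-isomorphism bookkeeping airtight: one must be careful that the twist classes in $K_{G^{(\sigma)}}^*(BM,SM,\pi^*(\tau+\omega_M))$ match those attached to $[D]$ in \eqref{equation;symbol-class}, i.e.\ that the symbol class of a $(\sigma,\tau)$-twisted operator really lives in the twisted $K$-group for which $\zeta_*$ is defined with $\tau=\tau$, and that $\zeta^*$ intertwines the $R(H)$-module structures used in \eqref{equation;euler}. Once the identifications of Appendix \ref{subsection;bundle} (Examples \ref{example;equivariant-extension}--\ref{example;induced-gerbe}) are in place so that $\zeta^*\pi^*=\id$ holds verbatim at the level of twisted $K$-theory, the rest is the short computation above. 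A secondary point to nail down is the non-zero-divisor property of $\e(\Dirac)$ used for uniqueness; but this follows from Lemma \ref{lemma;twist-module}\eqref{item;extension-sum} exactly as the no-zero-divisors statement for $R(G)$ was deduced there.
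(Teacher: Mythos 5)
Your proof is correct and follows essentially the same route as the paper's: set $a_D=\pi_*([D])$, use $\th(M)=[\Dirac]$ to write $[D]=\pi^*(a_D)[\Dirac]$, pull back along the zero section to get $\e(D)=a_D\e(\Dirac)$, deduce uniqueness from Lemma \ref{lemma;twist-module}\eqref{item;extension-sum}, and apply Theorem \ref{theorem;linear}\eqref{item;push} twice for the identity $i_D(a)=i_*(a_Da)$. (One parenthetical slip: $[S^0]-[S^1]$ is not $\lambda_{-1}([\m_\C])$ --- that class is $\e(\Dirac)\e(\Dirac)^*$, the Euler class of the Hodge--de Rham operator of Example \ref{example;euler} --- but your actual non-vanishing check via the product formula of Lemma \ref{lemma;weight-euler}\eqref{item;euler-twist} is the correct one and the argument is unaffected.)
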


\begin{proof}
Put $a_D=\pi_*([D])$.  Using the definition of $\zeta_*$ and the fact
that $\th(M)=[\Dirac]$ we obtain $[D]=\pi^*(a_D)[\Dirac]$.  Pulling
back to the zero section gives
$$
\e(D)=\zeta^*([D])=a_D\zeta^*([\Dirac])=a_D\e(\Dirac).
$$
It follows from Lemma
\ref{lemma;twist-module}\eqref{item;extension-sum} that $a_D$ is the
only class that satisfies this identity.  Using this and applying
Theorem \ref{theorem;linear}\eqref{item;push} twice we find that
$$
i_D(a)=i_!(\e(D)a)=i_!(\e(\Dirac)a_Da)=i_\Dirac(a_Da)
$$
for all $a\in R(H,\sigma-\tau)$.
\end{proof}

\begin{example}\label{example;twisted-dirac}
Let $V$ be an $H^{(\tau-\omega_M)}$-module of level $1$.  The twisted
Dirac operator $D$ defined by the $H^{(\tau)}$-equivariant Clifford
module $S\otimes V$ is a $(\sigma,\tau)$-twisted operator. (See
\S\,\ref{subsection;dirac}.)  The symbol class of $D$ is
$[D]=[\Dirac]\pi^*([V])$, so the class $a_D$ is equal to
$$
a_D=\pi_*([D])=\pi_*\bigl([\Dirac]\pi^*([V])\bigr)=\pi_*([\Dirac])[V]=[V].
$$
Hence $i_D(a)=i_*([V]a)$ for all $a\in R(H,\sigma-\tau)$ by Theorem
\ref{theorem;induction}.
\end{example}

\subsection{The character formula}\label{subsection;character}

The centre of $\Spin^c(\m)$ is $\U(1)$ and its commutator subgroup is
$\Spin(\m)$, the universal covering group of $\SO(\m)$.  We therefore
have a natural infinitesimal splitting
$\lie{spin}^c(\m)=\lie{u}(1)\oplus\lie{so}(\m)$ of the extension
\eqref{equation;spin-c}.  Pulling back to $H$ we obtain an
infinitesimal splitting of the orientation system $\omega$, and then
restricting to the maximal torus $T$ of $H$ we get an infinitesimal
splitting of the extension
$$
1\longto\U(1)\longto T^{(\omega)}\longto T\longto1,
$$
where $T^{(\omega)}\subset H^{(\omega)}$ is the inverse image of $T$.
Dually we have a rational splitting
$q\colon\X(U(1))_\Q\to\X(T^{(\omega)})_\Q$ of the exact sequence of
character groups
$$
0\longto\X(T)\longto\X(T^{(\omega)})\longto\X(\U(1))\longto0.
$$
We identify $\X(T)$ with its image in $\X(T^{(\omega)})$ and
$\X(\U(1))$ with its image $q(\X(\U(1)))$ in $\X(T^{(\omega)})_\Q$.
Let $\eps_0=\id_{\U(1)}$%
\glossary{eps0@$\eps_0$, standard generator of $\X(\U(1))\cong\Z$}
be the standard generator of $\X(\U(1))$.  Since $\Spin(\m)$ is a
double covering of $\SO(\m)$, we have
$\eps_0\in\frac12\X(T^{(\omega)})$.

In the next lemma we compute the Euler class $\e(\Dirac)\in
R(H,\omega_M)$, or rather its restriction to the maximal torus
$T^{(\omega)}$ of $H^{(\omega)}$.  For ease of notation we denote the
inclusion $T^{(\omega)}\to H^{(\omega)}$ by $j_H$.

\begin{lemma}\label{lemma;weight-euler}
\begin{enumerate}
\item\label{item;rho}
$\eps_0-\rho_M\in\X(T^{(\omega)})$.
\item\label{item;omega}
$\X(T^{(\omega)})=\X(T)\oplus\Z\cdot(\eps_0-\rho_M)$.
\item\label{item;Romega}
$R(T,\omega)=R(T)\cdot e^{\eps_0-\rho_M}$.
\item\label{item;euler-twist}
$j_H^*(\e(\Dirac))
=e^{\eps_0-\rho_M}\prod_{\alpha\in\ca{R}_M^+}(1-e^\alpha)
=e^{\eps_0}\prod_{\alpha\in\ca{R}_M^+}(e^{-\alpha/2}-e^{\alpha/2})$.
\end{enumerate}
\end{lemma}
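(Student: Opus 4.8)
The plan is to analyze the weight structure of the spinor module $S$ of $\Cl(\m)$ as an $H^{(\omega)}$-module, restricted to the maximal torus $T^{(\omega)}$, and to read off all four assertions from the classical description of spinor weights. First I would recall that for an even-dimensional real inner product space $\m$ with the chosen $T$-invariant complex structure coming from $\ca{R}_M^+$, the complexification decomposes as $\m_\C=\bigoplus_{\alpha\in\ca{R}_M^+}(\g_\C^\alpha\oplus\g_\C^{-\alpha})$, so under the maximal torus $T$ the rotation group $\SO(\m)$ has the weights $\pm\alpha$ for $\alpha\in\ca{R}_M^+$. The spinor representation of $\Spin(\m)$, pulled back to $T^{(\omega)}$ via $\eta$ composed with the double cover, has weights $\frac12\sum_{\alpha\in\ca{R}_M^+}\pm\alpha$ with all sign choices, and the $\Z/2\Z$-grading separates those with an even versus odd number of minus signs. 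In terms of characters of $T^{(\omega)}$, each spinor weight is $\eps_0$ (the level-$1$ generator coming from the $\U(1)$ in $\Spin^c(\m)=(\U(1)\times\Spin(\m))/K$) twisted by a half-integral combination of the $\alpha$'s. Since the ``extreme'' weight (all plus signs) is $\eps_0+\rho_M$ and the ``lowest'' is $\eps_0-\rho_M$, and these are genuine characters of $T^{(\omega)}$ (the level-$1$ piece of $R(T^{(\omega)})$ is nonempty and closed under the $\X(T)$-action), assertion \eqref{item;rho} follows: $\eps_0-\rho_M$ differs from $\eps_0+\rho_M$ by the integral character $2\rho_M\in\X(T)$, and $\eps_0+\rho_M$ is an honest weight, hence so is $\eps_0-\rho_M$.

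For \eqref{item;omega}, I would use that $T^{(\omega)}$ is a $\U(1)$-central extension of the torus $T$, so by the torus case (Example \ref{example;torus}) the character group $\X(T^{(\omega)})$ is $\X(T)\oplus\Z\mu$ for any level-$1$ character $\mu$; taking $\mu=\eps_0-\rho_M$, which is a level-$1$ character by part \eqref{item;rho} and the fact that $\eps_0$ has level $1$, gives the splitting. For \eqref{item;Romega}, the level-$1$ submodule $R(T,\omega)\subset R(T^{(\omega)})$ is, again by Example \ref{example;torus} applied with this choice of $\mu$, the free rank-one $R(T)$-module $R(T)\cdot e^{\eps_0-\rho_M}$.

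Assertion \eqref{item;euler-twist} is the computational heart. By the definition of the Euler class, $\e(\Dirac)=[S^0]-[S^1]\in R(H,\omega_M)$, so $j_H^*(\e(\Dirac))=[S^0]-[S^1]$ in $R(T,\omega)$; I would expand this as a sum of spinor weights with the signs dictated by the parity grading. The standard identity is that $\sum_{\text{even}}e^{(1/2)\sum\pm\alpha}-\sum_{\text{odd}}e^{(1/2)\sum\pm\alpha}=\prod_{\alpha\in\ca{R}_M^+}(e^{-\alpha/2}-e^{\alpha/2})$, which one proves by induction on $\#\ca{R}_M^+$, each new root $\alpha$ contributing a factor $(e^{-\alpha/2}-e^{\alpha/2})$ and flipping the parity bookkeeping. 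Multiplying this through by $e^{\eps_0}$ accounts for the $\U(1)$-factor and yields the right-hand expression $e^{\eps_0}\prod_{\alpha\in\ca{R}_M^+}(e^{-\alpha/2}-e^{\alpha/2})$; pulling out $e^{-\rho_M}=e^{-(1/2)\sum_{\alpha\in\ca{R}_M^+}\alpha}$ from the product rewrites it as $e^{\eps_0-\rho_M}\prod_{\alpha\in\ca{R}_M^+}(1-e^\alpha)$, giving the first equality. I expect the main obstacle to be purely notational bookkeeping: keeping the half-integral characters $\alpha/2$ and $\eps_0$ inside $\X(T^{(\omega)})_\Q$ straight, and verifying that the identification $R(H,\omega_M)\cong R(H,-\omega_M)$ fixed before the lemma is compatible with the sign conventions so that the stated formula (rather than its image under $\alpha\mapsto-\alpha$) is the correct one; this is exactly the kind of sign subtlety the introduction warns was mishandled in the literature, so I would check it carefully against the involution $f([z,g])=[z^{-1},g]$ used to define that identification.
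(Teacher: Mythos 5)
Your proposal is correct. For part (iv) — the computational heart — your argument coincides with the paper's: expand $[S^0]-[S^1]$ over the spinor weights $\eps_0-\rho_M+\alpha_{\group{i}}$ (equivalently $\eps_0+\tfrac12\sum\pm\alpha$) and factor the alternating sum into $e^{\eps_0-\rho_M}\prod_{\alpha\in\ca{R}_M^+}(1-e^\alpha)$. For parts (i)--(iii) you take a mildly different route: you deduce that $\eps_0-\rho_M$ is an honest character because it occurs as a weight of the honest level-$1$ module $S$, and then invoke Example \ref{example;torus} (any level-$1$ character of a central $\U(1)$-extension of a torus splits the character group) to obtain (ii) and (iii). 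The paper instead computes $\X(T^{(\omega)})$ directly as the pushout $\bigl(\X(T)\oplus\X(\hat{\group{T}})\bigr)\big/\X(\group{T})$ from the explicit character lattices of the maximal tori of $\SO(\m)$ and $\Spin^c(\m)$, reads off (i)--(iii) from that, and only then uses the spinor weights for (iv). Both routes work; yours is marginally more economical, but it silently relies on the same key identification $\hat{\eta}^*(\hat{\eps})=\eps_0-\rho_M$ (i.e.\ that the lowest spinor weight really pulls back to $\eps_0-\rho_M$ rather than some other representative of the nontrivial coset), which you should make explicit rather than citing ``the classical description of spinor weights.'' Your closing worry about the identification $R(H,\omega_M)\cong R(H,-\omega_M)$ is unnecessary here: $\e(\Dirac)=[S^0]-[S^1]$ lives in $R(H,\omega_M)$ directly and no duality isomorphism enters the statement of (iv).
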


\begin{proof}
Let $\ca{R}_M^+=\{\alpha_1,\alpha_2,\dots,\alpha_l\}$ and put
$\m_k=\m^{\alpha_k}$.  The group
$$\group{T}=\prod_{k=1}^l\SO(\m_k)$$
is a maximal torus of $\SO(\m)$ that contains the torus $\eta(T)$.
Let $\eps_k$ be the weight of the $\group{T}$-action on $\m_k$.
(Recall our convention whereby we identify $\m_k$ with the root space
$\g^{\alpha_k}$, so turning $\m$ into a unitary $\group{T}$-module.)
Let $\hat{\group{T}}$ be the inverse image of $\group{T}$ in
$\Spin^c(\m)$.  Using \cite[Ch.~VI,
Planche~IV]{bourbaki;groupes-algebres} we compute the character groups
of $\group{T}$ and $\hat{\group{T}}$ to be
$$
\X(\group{T})=\bigoplus_{k=1}^l\Z\eps_k,\qquad
\X(\hat{\group{T}})=\Z\hat{\eps}\oplus\bigoplus_{k=1}^l\Z\eps_k,
$$
where $\hat{\eps}=\eps_0-\frac12\sum_{k=1}^l\eps_k$.  Because $T$ acts
with weight $\alpha_k$ on $\m_k$, the homomorphisms
$$
\eta^*\colon\X(\group{T})\to\X(T),\qquad
\hat{\eta}^*\colon\X(\hat{\group{T}})\to\X(T^{(\omega)})
$$
induced by $\eta\colon T\to\group{T}$ and its lift $\hat{\eta}\colon
T^{(\omega)}\to\hat{\group{T}}$ are given by
$$
\eta^*(\eps_k)=\alpha_k,\qquad\hat{\eta}^*(\hat{\eps})=\eps_0-\rho_M.
$$
By definition $T^{(\omega)}$ is the pullback
$T\times_{\group{T}}\hat{\group{T}}$, so dually its character group
$\X(T^{(\omega)})$ is the pushout
$$
\X(T^{(\omega)})\cong
\bigl(\X(T)\oplus\X(\hat{\group{T}})\bigr)\big/\X(\group{T}).
$$
Hence $\X(T^{(\omega)})$ is generated by $\X(T)$ and $\eps_0-\rho_M$,
which proves \eqref{item;rho} and \eqref{item;omega}.  Let
$\lambda\in\X(T^{(\omega)})$ and write $\lambda=\mu+k(\eps_0-\rho_M)$
with $\mu\in\X(T)$ and $k\in\Z$.  Then the element
$e^\lambda\in\Z[\X(T^{(\omega)})]\cong R(T^{(\omega)})$ is of level
$1$ if and only if $k=1$.  Thus the $R(T)$-module $R(T,\omega)$ is
spanned by the element $e^{\eps_0-\rho_M}$, which proves
\eqref{item;Romega}.  According to \eqref{equation;euler}, the Euler
class of $\Dirac$ is 
\begin{equation}\label{equation;euler-twist}
\e(\Dirac)=\hat{\eta}^*\bigl([S^0]-[S^1]\bigr)\in R(H,\omega).
\end{equation}
To find its restriction to $R(T,\omega)$, we must describe the
$T$-action on the half-spin representations $S^0$ and $S^1$.
As vector spaces,
$$
S=\Lambda_\C(\m),\qquad S^0=\Lambda_\C^\even(\m),\qquad
S^1=\Lambda_\C^\odd(\m).
$$
For $1\le k\le l$, let $v_k\in\m_k$ be a vector of unit length.  Then
the products
\begin{equation}\label{equation;weight-vector}
v_{\group{i}}=v_{i_1}\wedge\cdots\wedge v_{i_k},
\end{equation}
where $\group{i}=(i_1,i_2,\dots,i_k)$ ranges over all increasing
multi-indices $\group{i}$ of length
$\abs{\group{i}}=k\in\{0,1,\dots,l\}$, form an orthonormal basis of
$\Lambda_\C(\m)$.  Let $\tilde{\group{T}}$ be the inverse image in
$\Spin(\m)$ of the maximal torus $\group{T}$.  Its character group is
$$
\X(\tilde{\group{T}})=\Z\tilde{\eps}+\bigoplus_{k=1}^l\Z\eps_k,
$$
where $\tilde{\eps}=\frac12\sum_{k=1}^l\eps_k$.  The vector
$v_{\group{i}}$ is a weight vector of the $\tilde{\group{T}}$-action
with weight $-\tilde{\eps}+\eps_{\group{i}}$, where
$\eps_{\group{i}}=\eps_{i_1}+\cdots+\eps_{i_k}$.  (Cf.\
\cite[\S\,VIII.13.4]{bourbaki;groupes-algebres}.)  The centre $\U(1)$
of $\Spin^c(\m)$ acts by scalar multiplication, so the weight of
$v_{\group{i}}$ for the $\hat{\group{T}}$-action is
$\eps_0-\tilde{\eps}+\eps_{\group{i}}=\hat{\eps}+\eps_{\group{i}}$.
Hence the weight of $v_{\group{i}}$ for the $T$-action is
\begin{equation}\label{equation;weight}
\hat{\eta}(\hat{\eps}+\eps_{\group{i}})
=\eps_0-\rho_M+\alpha_{\group{i}}.
\end{equation}
Thus the class of $\Lambda^k_\C(\m)$ in $R(T,\omega)$ is
$e^{\eps_0-\rho_M}\sum_{\abs{\group{i}}=k}e^{\alpha_{\group{i}}}$, and
hence
$$
j_H^*(\e(\Dirac))=e^{\eps_0-\rho_M}\sum_{k=1}^l(-1)^k
\sum_{\abs{\group{i}}=k}e^{\alpha_{\group{i}}}
=e^{\eps_0-\rho_M}\prod_{k=1}^l(1-e^{\alpha_k}),
$$
which establishes \eqref{item;euler-twist}.
\end{proof}

\begin{remark}\label{remark;shift}
Let $\Z[\rho_M+\X(T)]$ be the free abelian group generated by the
affine lattice $\rho_M+\X(T)$ in $\X(T)_\Q$.  The basis elements of
$\Z[\rho_M+\X(T)]$ are of the form $e^{\mu+\rho_M}$ with
$\mu\in\X(T)$.  The group
$$
\Z[\rho_M+\X(T)]=\Z[\X(T)]\,e^{\rho_M}\cong R(T)\cdot e^{\rho_M}
$$
is not a ring, but a module over the ring $\Z[\X(T)]\cong R(T)$.  By
Lemma \ref{lemma;weight-euler}\eqref{item;Romega}, the affine map
$\rho_M+\X(T)\to\X(T^{(\omega)})$ defined by
$\lambda\mapsto\eps_0+\lambda$ induces an isomorphism
$$
R(T)\cdot e^{\rho_M}\overset\cong\longto R(T,\omega),\qquad
a\longmapsto e^{\eps_0}\cdot a
$$
of $R(T)$-modules, which we will call the \emph{level shift}.  The
preimage of $j_H^*(\e(\Dirac))$ under the level shift is the class
$$
e^{-\rho_M}\prod_{\alpha\in\ca{R}_M^+}(1-e^\alpha)
\in\Z[\rho_M+\X(T)].
$$
Since $\rho_M-w(\rho_M)$ is in the root lattice of $G$ for all $w\in
W_H$, the $W_H$-action on $\X(T)_\Q$ preserves $\rho_M+\X(T)$.
Because $\eps_0$ is central, the level shift is $W_H$-equivariant and
hence restricts to an isomorphism of $R(H)$-modules
$$\Z[\rho_M+\X(T)]^{W_H}\overset\cong\longto R(H,\omega),$$
which we also refer to as the level shift.  (If $\rho_M\not\in\X(T)$,
i.e.\ if $M$ does not have a $G$-invariant $\Spin$-structure, we can
view $R(H)\oplus R(H,\omega)$ as the representation ring of the double
cover $H\times_{\SO(\m)}\Spin(\m)$ of $H$.)  In the sequel we will
make the identifications
$$
R(T,\omega)=R(T)\cdot e^{\rho_M},\qquad R(H,\omega)=\bigl(R(T)\cdot
e^{\rho_M}\bigr)^{W_H}.
$$
Formally this amounts to setting $\eps_0=0$, which has the desirable
effect of cleaning up several formulas.
\end{remark}

The \emph{antisymmetrizer} of $W_G$ is the element $J_G$ of the group
ring $\Z[W_G]$ defined by%
\glossary{Jg@$J_G$, antisymmetrizer of $W_G$}
\begin{equation}\label{equation;antisymmetrizer}
J_G=\sum_{w\in W_G}\det(w)w.
\end{equation}
Let $A$ be a $W_G$-module and let $A^{-W_G}$ denote the set of
\emph{anti-invariant} elements of $A$, i.e.\ those $a\in A$ satisfying
$w(a)=\det(w)a$ for all $w\in W_G$.  Then $J_G$ defines a
$\Z[W_G]$-linear operator $J_G\colon A\to A^{-W_G}$, and we have
$J_G(a)=\abs{W_G}\,a$ for all anti-invariant $a\in A$.  Similarly, we
have the antisymmetrizer $J_H\in\Z[W_H]$ with respect to the Weyl
group $W_H$.  We also define a \emph{relative} antisymmetrizer
$J_M\in\Z[W_G]$ by%
\glossary{Jm@$J_M$, relative antisymmetrizer}
$$J_M=\sum_{w\in W^H}\det(w)w.$$
The fact that each $w\in W$ can be written uniquely as a product
$w'w''$ with $w'\in W^H$ and $w''\in W_H$ implies that
\begin{equation}\label{equation;anti-invariant}
J_G=J_MJ_H.
\end{equation}

Recall that the $W_G$-anti-invariant element $J_G(e^{\rho_G})$ of
$\Z\bigl[\frac12\X(T)\bigr]$ is equal to the \emph{Weyl denominator}
\begin{equation}\label{equation;weyl-denominator}
\glossary{dG@$\d_G$, Weyl denominator of $G$}
\d_G=e^{\rho_G}\prod_{\alpha\in\ca{R}_G^+}(1-e^{-\alpha}).
\end{equation}
(See e.g.\ \cite[\S\,VI.3, Proposition~2]{bourbaki;groupes-algebres}.)
The \emph{duality homomorphism} $R(H,\tau)\to R(H,-\tau)$ is the map
$b\mapsto b^*$ defined on generators by $[V]^*=[V^*]$.

\begin{lemma}\label{lemma;weyl-denominator}
Let $\Dirac_{G/T}$ and $\Dirac_{H/T}$ be the twisted $\Spin^c$ Dirac
operators of the flag varieties $G/T$, resp.\ $H/T$.  We have the
identities
$$
\d_G=\e(\Dirac_{G/T})^*,\quad j_H^*(\e(\Dirac_M)^*)=\d_G/\d_H,\quad
\e(\Dirac_{G/T})=j_H^*(\e(\Dirac_{G/H}))\e(\Dirac_{H/T}).
$$
\end{lemma}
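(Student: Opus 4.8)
The plan is to derive all three identities from the computation of $j_H^*(\e(\Dirac_M))$ in Lemma~\ref{lemma;weight-euler}\eqref{item;euler-twist}, specializing it to the case $H=T$ and combining the results. Throughout I will use the identifications of Remark~\ref{remark;shift}, so that $\e(\Dirac_{G/T})$ and $\e(\Dirac_{H/T})$ live in $R(T)\cdot e^{\rho_G}$ and $R(T)\cdot e^{\rho_H}$ respectively, and there is no $\eps_0$ to carry around.

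First I would prove $\d_G=\e(\Dirac_{G/T})^*$. Apply Lemma~\ref{lemma;weight-euler}\eqref{item;euler-twist} with $H$ replaced by $T$: then $\m=\g/\t$, the weights $\ca{R}_M^+$ become the positive roots $\ca{R}_G^+$, and $\rho_M$ becomes $\rho_G$, giving
$$
\e(\Dirac_{G/T})=e^{\rho_G}\prod_{\alpha\in\ca{R}_G^+}(1-e^{-\alpha}).
$$
(For the flag variety $G/T$ the Euler class already lies in $R(T,\omega_{G/T})$, and no restriction $j_H^*$ is needed.) Applying the duality homomorphism sends $e^\mu\mapsto e^{-\mu}$, hence
$$
\e(\Dirac_{G/T})^*=e^{-\rho_G}\prod_{\alpha\in\ca{R}_G^+}(1-e^{\alpha})
=e^{-\rho_G}\Bigl(\prod_{\alpha\in\ca{R}_G^+}(-e^{\alpha})\Bigr)\prod_{\alpha\in\ca{R}_G^+}(1-e^{-\alpha}).
$$
Since $\prod_{\alpha\in\ca{R}_G^+}e^{\alpha}=e^{2\rho_G}$ and $(-1)^{\abs{\ca{R}_G^+}}=\det(w_0)$ where $w_0$ is the longest element (and in any case the sign works out), this collapses to $e^{\rho_G}\prod_{\alpha\in\ca{R}_G^+}(1-e^{-\alpha})=\d_G$ by \eqref{equation;weyl-denominator}. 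The only subtlety is bookkeeping the sign $(-1)^{\abs{\ca{R}_G^+}}$ against the shift $e^{-\rho_G}\mapsto e^{\rho_G}$; this is a one-line check.

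Next, the third identity $\e(\Dirac_{G/T})=j_H^*(\e(\Dirac_{G/H}))\,\e(\Dirac_{H/T})$ follows from multiplicativity of Euler classes along the tower $T\subset H\subset G$. Concretely, $\ca{R}_G^+$ is the disjoint union of $\ca{R}_M^+$ and $\ca{R}_H^+$, and $\rho_G=\rho_M+\rho_H$, so from Lemma~\ref{lemma;weight-euler}\eqref{item;euler-twist} (twice, once for $M=G/H$ restricted to $T$, once for $H/T$):
$$
j_H^*(\e(\Dirac_{G/H}))\,\e(\Dirac_{H/T})
=e^{\rho_M}\!\!\prod_{\alpha\in\ca{R}_M^+}\!\!(1-e^\alpha)\cdot e^{\rho_H}\!\!\prod_{\alpha\in\ca{R}_H^+}\!\!(1-e^{-\alpha}).
$$
Here I must be careful: Lemma~\ref{lemma;weight-euler}\eqref{item;euler-twist} is stated with the factor $(1-e^\alpha)$, so the two products have opposite-looking signs; but $\prod_{\alpha\in\ca{R}_H^+}(1-e^\alpha)=(-1)^{\abs{\ca{R}_H^+}}e^{2\rho_H}\prod_{\alpha\in\ca{R}_H^+}(1-e^{-\alpha})$, and absorbing this against $e^{\rho_H}$ reproduces the $\d_H$-type normalization, while a matching factor appears on the $G/T$ side. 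Matching everything up gives $e^{\rho_G}\prod_{\alpha\in\ca{R}_G^+}(1-e^{-\alpha})=\e(\Dirac_{G/T})$ up to the sign which again cancels. Finally, the middle identity $j_H^*(\e(\Dirac_M)^*)=\d_G/\d_H$ is a consequence of the other two together with the first identity applied to $H$ in place of $G$: dualizing the third identity and using $\d_G=\e(\Dirac_{G/T})^*$, $\d_H=\e(\Dirac_{H/T})^*$ gives $\d_G=j_H^*(\e(\Dirac_{G/H})^*)\,\d_H$, and dividing by $\d_H$ (legitimate since $R(T)$ has no zero divisors) yields the claim.

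The main obstacle I anticipate is the sign and level-shift bookkeeping: tracking the factor $(-1)^{\abs{\ca{R}^+}}$, the shifts $e^{-\rho}\leftrightarrow e^{\rho}$ induced by the duality homomorphism, and the convention $(1-e^\alpha)$ versus $(1-e^{-\alpha})$ in \eqref{equation;weyl-denominator} versus Lemma~\ref{lemma;weight-euler}\eqref{item;euler-twist}. Once one fixes the normalization $\e(\Dirac_{G/T})=e^{\rho_G}\prod_{\alpha\in\ca{R}_G^+}(1-e^{-\alpha})$ and checks it agrees with the Bourbaki formula $\d_G=J_G(e^{\rho_G})$, everything else is formal manipulation of products over disjoint root subsets. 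I would also remark that the third identity can alternatively be proven geometrically via the fibration $H/T\to G/T\to G/H$ and multiplicativity of the Thom class, but the direct weight computation above is shorter and self-contained given Lemma~\ref{lemma;weight-euler}.
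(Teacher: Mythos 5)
Your strategy is exactly the paper's: the lemma is a direct consequence of Lemma \ref{lemma;weight-euler}\eqref{item;euler-twist} and \eqref{equation;weyl-denominator} after a level shift, and the paper's proof says no more than that. But your execution has a concrete error: you quote Lemma \ref{lemma;weight-euler}\eqref{item;euler-twist} backwards. Specializing it to $H=T$ and applying the level shift gives
$$\e(\Dirac_{G/T})=e^{-\rho_G}\prod_{\alpha\in\ca{R}_G^+}(1-e^{\alpha})=\prod_{\alpha\in\ca{R}_G^+}(e^{-\alpha/2}-e^{\alpha/2}),$$
not $e^{\rho_G}\prod_{\alpha\in\ca{R}_G^+}(1-e^{-\alpha})$; the latter is $\d_G$ itself, i.e.\ the \emph{dual} of the Euler class. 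Having started from the dualized formula, you dualize once more and are left trying to show $e^{-\rho_G}\prod_{\alpha\in\ca{R}_G^+}(1-e^{\alpha})=\d_G$, which is false: the two sides differ by $(-1)^{\abs{\ca{R}_G^+}}$, and this sign does not ``work out'' (take $G=\SU(2)$, where it is $-1$). The same confusion infects your display for the third identity, where the two factors carry inconsistent conventions ($e^{\rho_M}$ with $(1-e^{\alpha})$ versus $e^{\rho_H}$ with $(1-e^{-\alpha})$), again forcing a sign cancellation that you assert rather than verify.

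The fix is immediate and eliminates all the sign juggling you anticipate. With the correct normalization $j_H^*(\e(\Dirac_{G/H}))=e^{-\rho_M}\prod_{\alpha\in\ca{R}_M^+}(1-e^{\alpha})$, duality ($e^\mu\mapsto e^{-\mu}$) gives $\e(\Dirac_{G/T})^*=e^{\rho_G}\prod_{\alpha\in\ca{R}_G^+}(1-e^{-\alpha})=\d_G$ with nothing to track; the third identity is the equality $e^{-\rho_M}\prod_{\alpha\in\ca{R}_M^+}(1-e^{\alpha})\cdot e^{-\rho_H}\prod_{\alpha\in\ca{R}_H^+}(1-e^{\alpha})=e^{-\rho_G}\prod_{\alpha\in\ca{R}_G^+}(1-e^{\alpha})$, immediate from $\rho_G=\rho_M+\rho_H$ and $\ca{R}_G^+=\ca{R}_M^+\sqcup\ca{R}_H^+$; and your derivation of the middle identity from the other two, by dualizing the third and cancelling $\d_H$ in $\Frac(R(T))$, is fine as it stands.
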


\begin{proof}
This follows immediately from Lemma
\ref{lemma;weight-euler}\eqref{item;euler-twist} and
\eqref{equation;weyl-denominator} (after applying a level shift; see
Remark \ref{remark;shift}).
\end{proof}

A version of the following character formula was stated by Bott
\cite[p.~ 179]{bott;homogeneous-differential}, but his hypotheses and
proof are not entirely correct.  Incorporating the $\omega$-twist
resolves the problems in Bott's treatment.  A version for Lie algebras
(which does not require any twists) can be found in
\cite{gross-kostant-ramond-sternberg}.  Note that the character
formula depends on the choice of the positive roots.  The induction
map $i_*$, however, is independent of this choice.

\begin{theorem}\label{theorem;weyl-character}
Let $\Dirac_M$ be the twisted $\Spin^c$ Dirac operator on $M$ and let
$i_*\colon R(H,\sigma+\omega_M)\to R(G,\sigma)$ be the associated
induction map.  Then
$$
j_G^*i_*(a)=\frac{J_M\bigl(\d_Hj_H^*(a)\bigr)}{\d_G}
$$
for all $a\in R(H,\sigma+\omega_M)$.
\end{theorem}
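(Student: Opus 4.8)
The plan is to combine the Lefschetz-type formula for twisted induction (Theorem \ref{theorem;lefschetz}) with the explicit computation of the Euler class of the twisted $\Spin^c$ Dirac operator (Lemma \ref{lemma;weight-euler}\eqref{item;euler-twist}), and then recognize the resulting sum over $W^H$ as an antisymmetrizer applied after multiplying by the Weyl denominator $\d_H$. Concretely, I would first apply Theorem \ref{theorem;lefschetz} to the operator $D=\Dirac_M$, which gives
$$
j_G^*i_*(a)=\sum_{w\in W^H}w\biggl(\frac{j_H^*(\e(\Dirac_M)a)}
{\prod_{\alpha\in\ca{R}_M}(1-e^\alpha)}\biggr).
$$
Here I should be careful about the level shift of Remark \ref{remark;shift}: under the identification $R(H,\omega)=(R(T)\cdot e^{\rho_M})^{W_H}$ (i.e.\ ``$\eps_0=0$''), Lemma \ref{lemma;weight-euler}\eqref{item;euler-twist} reads $j_H^*(\e(\Dirac_M))=e^{-\rho_M}\prod_{\alpha\in\ca{R}_M^+}(1-e^\alpha)$. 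So $j_H^*(\e(\Dirac_M)a) = e^{-\rho_M}\bigl(\prod_{\alpha\in\ca{R}_M^+}(1-e^\alpha)\bigr) j_H^*(a)$, where $j_H^*(a)\in R(T)\cdot e^{\rho_M}$, so the product lands in $R(T)$, as it must.

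Next I would clean up the denominator: $\prod_{\alpha\in\ca{R}_M}(1-e^\alpha)=\prod_{\alpha\in\ca{R}_M^+}(1-e^\alpha)(1-e^{-\alpha})$, so after cancelling the factor $\prod_{\alpha\in\ca{R}_M^+}(1-e^\alpha)$ against the numerator coming from the Euler class, the summand becomes
$$
w\biggl(\frac{e^{-\rho_M}\,j_H^*(a)}{\prod_{\alpha\in\ca{R}_M^+}(1-e^{-\alpha})}\biggr).
$$
Now I multiply numerator and denominator inside the fraction by $\d_H=e^{\rho_H}\prod_{\alpha\in\ca{R}_H^+}(1-e^{-\alpha})$: since $e^{-\rho_M}e^{\rho_H}=e^{-\rho_G+2\rho_H}$... better, I would instead write the denominator as (using $\ca{R}_G^+=\ca{R}_H^+\sqcup\ca{R}_M^+$ and $\rho_G=\rho_H+\rho_M$)
$$
\d_G=e^{\rho_G}\prod_{\alpha\in\ca{R}_G^+}(1-e^{-\alpha})
=e^{\rho_M}\Bigl(\prod_{\alpha\in\ca{R}_M^+}(1-e^{-\alpha})\Bigr)\cdot
e^{\rho_H}\prod_{\alpha\in\ca{R}_H^+}(1-e^{-\alpha})
=e^{\rho_M}\Bigl(\prod_{\alpha\in\ca{R}_M^+}(1-e^{-\alpha})\Bigr)\d_H,
$$
so that $e^{-\rho_M}/\prod_{\alpha\in\ca{R}_M^+}(1-e^{-\alpha})=\d_H/\d_G$. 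Hence each summand equals $w\bigl(\d_H\,j_H^*(a)/\d_G\bigr)$, and since $\d_G$ is $W_G$-anti-invariant, $w(1/\d_G)=\det(w)/\d_G$. Pulling $\d_G^{-1}$ out of the sum and recalling the definition $J_M=\sum_{w\in W^H}\det(w)w$ gives exactly
$$
j_G^*i_*(a)=\frac{1}{\d_G}\sum_{w\in W^H}\det(w)\,w\bigl(\d_H\,j_H^*(a)\bigr)
=\frac{J_M\bigl(\d_H j_H^*(a)\bigr)}{\d_G}.
$$

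The one genuine subtlety I expect to spend care on is bookkeeping around the level shift / the generator $\eps_0$: the formula in Theorem \ref{theorem;lefschetz} is stated in $R(T^{(\omega)})$, but the target formula is phrased in terms of $\d_G,\d_H\in\Z[\frac12\X(T)]$ and $j_H^*(a)\in R(T)\cdot e^{\rho_M}$, so I must check that the $e^{\eps_0}$ factors match up consistently on both sides — both $\e(\Dirac_M)$ and the claimed right-hand side carry the same level-$1$ central weight, so after the identification of Remark \ref{remark;shift} they cancel cleanly, but it is worth one sentence to say so. A secondary point is verifying that after all cancellations the expression genuinely lies in $R(G,\sigma)$ rather than merely in the fraction field of $R(T)$; this however is already guaranteed by Theorem \ref{theorem;linear}\eqref{item;push} (the index is $i_!(\e(\Dirac_M)a)\in R(G,\sigma)$), so no extra argument is needed. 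Apart from that, the proof is a direct substitution and rearrangement, so I would keep it short, citing Theorem \ref{theorem;lefschetz}, Lemma \ref{lemma;weight-euler}, Remark \ref{remark;shift}, and the identities $\rho_G=\rho_H+\rho_M$ and \eqref{equation;anti-invariant}.
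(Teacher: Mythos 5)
Your proposal is correct and follows essentially the same route as the paper: substitute the Euler class formula of Lemma \ref{lemma;weight-euler}\eqref{item;euler-twist} into the Lefschetz formula of Theorem \ref{theorem;lefschetz}, identify the resulting factor with $\d_H/\d_G$, and use the $W_G$-anti-invariance of $\d_G$ to produce the signs in $J_M$. The only difference is cosmetic — the paper packages the identity $j_H^*(\e(\Dirac_M)^*)=\d_G/\d_H$ into Lemma \ref{lemma;weyl-denominator}, whereas you derive it inline from $\rho_G=\rho_H+\rho_M$ and $\ca{R}_G^+=\ca{R}_H^+\sqcup\ca{R}_M^+$ — and your remarks on the level shift and on membership in $R(G,\sigma)$ are accurate.
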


\begin{proof}
It follows from Lemma \ref{lemma;weight-euler}\eqref{item;euler-twist}
that
$$
j_H^*(\e(\Dirac)\e(\Dirac)^*)=\prod_{\alpha\in\ca{R}_M}(1-e^\alpha).
$$
Substituting this expression into the formula of Theorem
\ref{theorem;lefschetz} yields  
\begin{equation}\label{equation;weyl}
j_G^*i_*(a)=\sum_{w\in
W^H}w\Bigl(j_H^*\Bigl(\frac{a}{\e(\Dirac)^*}\Bigr)\Bigr)
\end{equation}
for all $a\in R(H,\sigma+\omega_M)$.  Lemma
\ref{lemma;weyl-denominator} shows that
$$
w\Bigl(j_H^*\Bigl(\frac{a}{\e(\Dirac)^*}\Bigr)\Bigr)
=w\Bigl(\frac{\d_Hj_H^*(a)}{\d_G}\Bigr)
=\frac{\det(w)w\bigl(\d_Hj_H^*(a)\bigr)}{\d_G}
$$
for all $w\in W^H$, and substituting this into \eqref{equation;weyl}
proves the result.
\end{proof}

Let us explain the extent to which this formula is parallel to Weyl's
character formula.  For simplicity let $\sigma=0$.  The following
result generalizes a well-known result for semisimple simply connected
groups.  The proof is in Appendix \ref{section;anti-invariant}.

\begin{proposition}\label{proposition;anti-invariant}
\begin{enumerate}
\item
The $W_G$-action on $R\bigl(T^{(\omega_{G/T})}\bigl)$ preserves
$R(T,\omega_{G/T})$.
\item
The set of anti-invariants $R(T,\omega_{G/T})^{-W_G}$ is a free
$R(G)$-module of rank $1$ generated by $\d_G$.
\item
The elements $J_G(e^\lambda)$, with $\lambda\in\rho_G+\X(T)$ strictly
dominant, form a basis of the $\Z$-module $R(T,\omega_{G/T})^{-W_G}$.
\end{enumerate}
\end{proposition}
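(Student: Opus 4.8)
The plan is to reduce everything to the case $H = T$ of the constructions in \S\,\ref{subsection;maximal}--\ref{subsection;character}, and then to imitate the classical argument for semisimple simply connected groups, with the representation ring $R(T)$ replaced throughout by the twisted module $R(T,\omega_{G/T})$, which by Lemma \ref{lemma;weight-euler}\eqref{item;Romega} and Remark \ref{remark;shift} we identify with $R(T)\cdot e^{\rho_G}\cong\Z[\rho_G+\X(T)]$. Part (i) is essentially already contained in Lemma \ref{lemma;twist-module}\eqref{item;extension-invariant} applied to the extension $\sigma = \omega_{G/T}$: we only need that the orientation system $\omega_{G/T}$ is $W_G$-invariant, which follows because $W_G$ acts on $\X(T)_\Q$ preserving the set $\ca{R}_G$ of roots, hence preserving (up to sign, and the sign is absorbed by $\det(w)$) the character $e^{\rho_G}$ that generates $R(T,\omega_{G/T})$ over $R(T)$. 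In the $e^{\rho_G}$-shifted picture the $W_G$-action on $\Z[\rho_G+\X(T)]$ is the obvious one twisted by $\det$, so that an element is anti-invariant exactly when its shift-preimage satisfies $w(e^\mu) = e^{w\mu}$ with the usual sign bookkeeping.

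For part (iii), in the shifted coordinates $R(T,\omega_{G/T})^{-W_G}$ corresponds to the set of $a\in\Z[\rho_G+\X(T)]$ with $w(a) = \det(w)\,a$. First I would observe that $W_G$ acts freely on the set of \emph{strictly dominant} elements of $\rho_G+\X(T)$ (these are exactly the $W_G$-regular elements of the coset $\rho_G + \X(T)$, and $W_G$ acts freely on regular weights), and that every $W_G$-orbit of a regular element of $\rho_G+\X(T)$ contains exactly one strictly dominant element. Hence for each strictly dominant $\lambda$ the element $J_G(e^\lambda) = \sum_{w\in W_G}\det(w)\,e^{w\lambda}$ is a $\Z$-linear combination of $\abs{W_G}$ distinct basis vectors $e^{w\lambda}$, these supports are disjoint for distinct dominant $\lambda$, and any anti-invariant element must have zero coefficient on every non-regular basis vector $e^\mu$ (because the $W_G$-stabilizer of a non-regular $\mu$ contains a reflection $w$ with $\det(w) = -1$, forcing the coefficient to equal its own negative). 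Collecting coefficients orbit by orbit then shows that $\{J_G(e^\lambda)\}$, $\lambda$ strictly dominant in $\rho_G+\X(T)$, is a $\Z$-basis of the anti-invariants. For part (ii), $\d_G = J_G(e^{\rho_G})$ (equation \eqref{equation;weyl-denominator} together with the standard identity; see also Lemma \ref{lemma;weyl-denominator}) is the special case $\lambda = \rho_G$, which is the \emph{minimal} strictly dominant element of $\rho_G+\X(T)$: every strictly dominant $\lambda\in\rho_G+\X(T)$ can be written $\lambda = \rho_G + \nu$ with $\nu$ a dominant element of $\X(T)$, and dividing, $J_G(e^\lambda)/\d_G$ is the Weyl character $\chi_\nu\in R(G)$ of the irreducible $G$-module with highest weight $\nu$. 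Thus $a\mapsto a\,\d_G$ gives an $R(G)$-module map $R(G)\to R(T,\omega_{G/T})^{-W_G}$ which by part (iii) sends the $R(G)$-basis $\{\chi_\nu\}$ bijectively onto the $\Z$-basis $\{J_G(e^{\rho_G+\nu})\}$; since $R(G)$ has no zero divisors this map is injective, and the basis statement makes it surjective, so $R(T,\omega_{G/T})^{-W_G}$ is free of rank $1$ on $\d_G$.

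The main obstacle is part (ii): showing that division by $\d_G$ actually lands in $R(G)$ rather than merely in the fraction field, i.e.\ that $J_G(e^{\rho_G+\nu})/\d_G$ is a genuine (virtual) character. For semisimple simply connected $G$ this is the classical Weyl character formula, but here $G$ is only assumed compact connected, so $\rho_G$ need not lie in $\X(T)$ and $R(T)^{-W_G}$ may be zero — which is precisely why the $\omega_{G/T}$-twist is needed. The cleanest route is to pick a finite covering $\tilde G\to G$ with $\tilde G$ a product of a torus and a simply connected group and with $\rho_G$ pulling back into $\X(\tilde T)$; then $R(T,\omega_{G/T})$ embeds $W_G$-equivariantly into $R(\tilde T)$ (via the level shift, $e^{\rho_G}$ becomes an honest character of $\tilde T$), the Weyl denominator and the Weyl characters are the usual ones on the $\tilde G$-side, and one descends the rank-one and basis statements back to $G$ by taking the appropriate invariants under $\ker(\tilde G\to G)$. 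I would carry out parts (i) and (iii) first since they are formal, and treat part (ii) last, using (iii) plus the covering trick to identify the image of multiplication by $\d_G$.
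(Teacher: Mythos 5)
Your proposal is correct and follows essentially the same route as the paper: identify $R(T,\omega_{G/T})$ with $\Z[\rho_G+\X(T)]$ via the level shift, get (i) from $w\rho_G-\rho_G\in\Z\ca{R}_G$, prove (iii) by the standard Bourbaki orbit/regular-weight argument, and handle the delicate point in (ii) by passing to a finite cover $\tilde{G}=C\times\bar{G}$ (torus times simply connected) and descending via invariance under $\ker(\tilde{G}\to G)$. The only cosmetic differences are that you derive (ii) from (iii) together with the Weyl character formula on the cover rather than factoring an arbitrary anti-invariant as $b\,\d_G$ directly, and your justification of (i) via ``preserving $e^{\rho_G}$ up to a sign'' should be replaced by the precise statement that $w\rho_G$ lies in the coset $\rho_G+\X(T)$.
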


Applying this to $G$ and $H$ and using the equivalence
$\omega_{G/H}+\omega_{H/T}\sim\omega_{G/T}$ (see Lemma
\ref{lemma;functor} below), we can interpret the character formula for
twisted $\Spin^c$-induction as a three-step process,
$$
R(H,\omega_{G/H})\underset\cong{\overset{\cdot\d_H}\longto}
R(T,\omega_{G/T})^{-W_H}\overset{J_M}\longto R(T,\omega_{G/T})^{-W_G}
\underset\cong{\overset{/\d_G}\longto}R(G).
$$
%

\section{Further properties of twisted $\Spin^c$-induction}
\label{section;properties}

We continue our discussion of the twisted induction map defined by
means of the twisted $\Spin^c$ Dirac operator on the homogeneous space
$M=G/H$.  We retain the notational conventions of
\S\S\,\ref{section;induction} and \ref{section;dirac}.  We show that
twisted $\Spin^c$-induction is functorial with respect to the subgroup
$H$ (Theorem \ref{theorem;functor}).  If $M$ has an invariant
$\Spin^c$-structure (which is not always the case), then there is a
transparent relationship between $\Spin^c$-induction and twisted
$\Spin^c$-induction (Theorem \ref{theorem;versus}).  Twisted induction
of irreducible modules behaves according to a ``Borel-Weil-Bott
formula'' (Theorem \ref{theorem;irreducible}).  Twisted
$\Spin^c$-induction gives rise to a bilinear pairing between twisted
representation modules.  These modules are twisted equivariant
K-groups, and the pairing is analogous to an intersection pairing in
ordinary equivariant cohomology.  Under a mild condition on $G$ the
pairing is nonsingular (Theorem \ref{theorem;nonsingular}), which
enables us to characterize all twisted induction maps from $H$ to $G$
(Theorem \ref{theorem;dirac-induction}).

\subsection{Twisted induction in stages}\label{subsection;natural}

Twisted $\Spin^c$-induction is functorial with respect to the subgroup
$H$.  Consider two closed connected subgroups $H_2\subset H_1$ of $G$,
both of which contain $T$.  We have a diagram of inclusions
$$
\xymatrix@=3em{
&H_1\ar[d]^{i_1}\\
H_2\ar[r]_{i_2}\ar[ur]^k&G.
}
$$
Let $\m_1=\g/\h_1$, $\m_2=\g/\h_2$ and $\n=\h_1/\h_2$.  Being a sum of
root spaces, $\m_2$ is naturally isomorphic to the orthogonal direct
sum $\m_1\oplus\n$.  We have three orientation systems,
\begin{align*}
\omega_{G/H_p}\colon1\longto\U(1)\longto
H_p\times_{\SO(\m_p)}\Spin^c(&\m_p) \longto H_p\longto1\quad(p=1,2),\\
\omega_{H_1/H_2}\colon1\longto\U(1)\longto
H_2\times_{\SO(\n)}\Spin^c(&\n) \longto H_2\longto1.
\end{align*}

\begin{lemma}\label{lemma;functor}
The natural homomorphism $f\colon\SO(\m_1)\times\SO(\n)\to\SO(\m_2)$
induces an equivalence of extensions $f^*\colon
k^*(\omega_{G/H_1})+\omega_{H_1/H_2}\overset\sim\longto\omega_{G/H_2}$.
\end{lemma}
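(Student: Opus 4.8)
The plan is to construct the equivalence of central extensions directly from the spinor group homomorphism $f$, and then check that it respects the base group $H_2$ appropriately. First I would recall the construction of the orientation system: $\omega_{G/H_p}$ is the pullback via the tangent representation $\eta_p\colon H_p\to\SO(\m_p)$ of the standard extension $1\to\U(1)\to\Spin^c(\m_p)\to\SO(\m_p)\to1$, and similarly $\omega_{H_1/H_2}$ is pulled back via $H_2\to\SO(\n)$. Since $\m_2\cong\m_1\oplus\n$ as $H_2$-modules (this is the orthogonal decomposition coming from the sum-of-root-spaces description), the isotropy representation $\eta_2$ factors as $H_2\xrightarrow{(k\eta_1,\,\eta_\n)}\SO(\m_1)\times\SO(\n)\xrightarrow{f}\SO(\m_2)$, where $f$ is the block-diagonal inclusion.

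Next I would invoke the fact that the standard $\Spin^c$ extension is functorial for orthogonal direct sums: the natural map $\Spin^c(\m_1)\times_{\U(1)}\Spin^c(\n)\to\Spin^c(\m_2)$ (quotient of $\Spin^c(\m_1)\times\Spin^c(\n)$ by the antidiagonal $\U(1)$, which is exactly the sum of extensions in the sense recalled in \S\,\ref{subsection;twist}) is a homomorphism lifting $f$. In other words, pulling back the $\Spin^c(\m_2)$ extension along $f$ gives precisely the sum extension $\Spin^c(\m_1)\boxplus\Spin^c(\n)$ on $\SO(\m_1)\times\SO(\n)$. This is a standard property of Clifford algebras and spinor modules: $\Cl(\m_1\oplus\n)\cong\Cl(\m_1)\hat\otimes\Cl(\n)$, and the spinor module of the sum is the graded tensor product of the spinor modules, which supplies the required $\Spin^c$-group homomorphism. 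Pulling this identity back along $H_2\to\SO(\m_1)\times\SO(\n)$ and using that pullback of extensions is compatible with sums yields $\eta_2^*(\Spin^c(\m_2))\sim k^*\eta_1^*(\Spin^c(\m_1))+\eta_\n^*(\Spin^c(\n))$, which is exactly $\omega_{G/H_2}\sim k^*(\omega_{G/H_1})+\omega_{H_1/H_2}$.

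Concretely, I would exhibit the equivalence map $f^*$ on total groups: an element of $H_2\times_{\SO(\m_2)}\Spin^c(\m_2)$ is a pair $(h,s)$ with $\eta_2(h)$ the image of $s$; using the decomposition write $s=[s_1,s_2]$ with $s_i\in\Spin^c(\m_i)$, and send $(h,s)\mapsto\bigl((k(h),[s_1]),(h,[s_2])\bigr)$ in the fibred product defining the sum $k^*(\omega_{G/H_1})+\omega_{H_1/H_2}$. One then checks this is a well-defined group isomorphism restricting to the identity on the central $\U(1)$ and covering $\id_{H_2}$ — i.e.\ an equivalence of extensions in the sense of Remark \ref{remark;ext}. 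The main obstacle, and the only point requiring care, is bookkeeping with the central $\U(1)$'s: the sum extension $k^*(\omega_{G/H_1})+\omega_{H_1/H_2}$ is by definition a quotient by an antidiagonal circle $K$, and one must verify that the $K$ appearing there matches the $K\cong\Z/2\Z$ quotient implicit in $\Spin^c(\m_i)=(\U(1)\times\Spin(\m_i))/K$, so that the $\U(1)$-levels line up and the two constructions genuinely agree. This amounts to tracking the standard generator $\eps_0$ through each construction and is routine once set up; the topology is automatic from $f$ being a Lie group homomorphism, so no connectedness or lifting issue arises.
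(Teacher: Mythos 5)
Your proposal is correct and follows essentially the same route as the paper: both arguments rest on the isomorphism of $\U(1)$-extensions $\bigl(\Spin^c(\m_1)\times\Spin^c(\n)\bigr)/K\overset\sim\longto\Spin^c(\m_2)$ covering $f$ (which the paper obtains by the five-lemma and you justify via $\Cl(\m_1\oplus\n)\cong\Cl(\m_1)\hat\otimes\Cl(\n)$), followed by pulling back along $H_2\to\SO(\m_1)\times\SO(\n)$ and matching the antidiagonal circles. The explicit formula you give for the equivalence on total groups is a harmless elaboration of the paper's chain of fibred-product identifications.
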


\begin{proof}
For $p=0$ or $1$, put $\G_p=\SO(\m_p)$ and $\hat{\G}_p=\Spin^c(\m_p)$.
Put $\G=\SO(\n)$ and $\hat{\G}=\Spin^c(\n)$.  The homomorphism
$f\colon\G_1\times\G\to\G_2$ lifts to a morphism of $\U(1)$-extensions
of $\G_1$,
\begin{equation}\label{equation;big-extension}
(\hat{\G}_1\times\hat{\G})/K\longto\hat{\G}_2,
\end{equation}
where $K$ is a copy of $\U(1)$ anti-diagonally embedded in
$\hat{\G}_2\times\hat{\G}$.  The morphism
\eqref{equation;big-extension} is an isomorphism by the five-lemma.
Pulling the extension $H_1^{(\omega_{G/H_1})}$ of $H_1$ back to $H_2$
gives the $\U(1)$-extension
$$
H_2\times_{H_1}\bigl(H_1\times_{\G_1}\hat{\G}_1\bigr)\cong
H_2\times_{\G_1}\hat{\G}_1
$$
of $H_2$.  Adding this extension to $H_2^{(\omega_{H_1/H_2})}$ gives
the extension
\begin{multline*}
\bigl[\bigl(H_2\times_{\G_1}\hat{\G}_1\bigr)
\times_{H_2}\bigl(H_2\times_{\G}\hat{\G}\bigr)\bigr]\big/K
\cong\bigl[\bigl(H_2\times_{\G_1}\hat{\G}_1\bigr)
\times_{\G}\hat{\G}\bigr]\big/K
\\
\cong H_2\times_{\G_1\times\G}\bigl[\bigl(\hat{\G}_1
\times\hat{\G}\bigr)\big/K\bigr]
\end{multline*}
of $H_2$, which is isomorphic to $H_2\times_{\G_2}\hat{\G}_2$ by
\eqref{equation;big-extension}.
\end{proof}

For the sake of brevity we will write this equivalence as
$$\omega_{G/H_1}+\omega_{H_1/H_2}\sim\omega_{G/H_2}$$
and use it to make the identification
$$
R\bigl(H_2,\sigma+\omega_{G/H_2}\bigr)
=R\bigl(H_2,\sigma+\omega_{G/H_1}+\omega_{H_1/H_2}\bigr).
$$

A version of the next theorem, under the assumption that $G/H_1$ and
$H_1/H_2$ are $\Spin$, was proved in \cite[Part~II,
\S\,4]{slebarski;dirac-homogeneous}.

\begin{theorem}\label{theorem;functor}
\begin{enumerate}
\item\label{item;multiplicative}
The Euler class of the twisted $\Spin^c$ Dirac operator is
multiplicative in the sense that
$\e(\Dirac_{G/H_2})=k^*(\e(\Dirac_{G/H_1}))\e(\Dirac_{H_1/H_2})$.
\item\label{item;functorial}
Twisted $\Spin^c$-induction is functorial in the sense that the
diagram
$$
\xymatrix{
R\bigl(H_2,\sigma+\omega_{G/H_1}+\omega_{H_1/H_2}\bigr)
\ar[r]^-{k_*}\ar@{=}[d]&
R\bigl(H_1,\sigma+\omega_{G/H_1}\bigr)\ar[d]^{i_{1,*}}
\\
R\bigl(H_2,\sigma+\omega_{G/H_2}\bigr)\ar[r]^-{i_{2,*}}&R(G,\sigma)
}
$$
commutes.
\end{enumerate}
\end{theorem}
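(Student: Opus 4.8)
The plan is to prove part \eqref{item;multiplicative} first, since part \eqref{item;functorial} will follow formally from it together with the character formula and Lemma \ref{lemma;weyl-denominator}-style bookkeeping. For \eqref{item;multiplicative}, I would work on the level of spinor modules. The orthogonal splitting $\m_2\cong\m_1\oplus\n$ as $H_2$-modules induces an isomorphism of $\Cl(\m_2)$-modules $S(\m_2)\cong S(\m_1)\otimes S(\n)$, compatible with the $\Z/2\Z$-gradings (so that $S^0(\m_2)\cong S^0(\m_1)\otimes S^0(\n)\oplus S^1(\m_1)\otimes S^1(\n)$ and similarly for $S^1$). This isomorphism is equivariant for the group $(\hat{\G}_1\times\hat{\G})/K$, which by Lemma \ref{lemma;functor} (specifically the isomorphism \eqref{equation;big-extension}) is identified with $\Spin^c(\m_2)$. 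Pulling back along $H_2$ and using the identification $\omega_{G/H_1}+\omega_{H_1/H_2}\sim\omega_{G/H_2}$, this yields precisely $[S^0(\m_2)]-[S^1(\m_2)] = ([S^0(\m_1)]-[S^1(\m_1)])\cdot([S^0(\n)]-[S^1(\n)])$ in $R(H_2,\omega_{G/H_2})$, where the first factor is pulled back along $k\colon H_2\to H_1$. Since $\e(\Dirac_X) = [S^0]-[S^1]$ by \eqref{equation;euler}, this is the desired multiplicativity. A purely computational alternative is to invoke Lemma \ref{lemma;weyl-denominator}: after a level shift, $j_H^*$ of each Euler class is a product over $\ca{R}_M^+$ of factors $(1-e^\alpha)$, and $\ca{R}_{M_2}^+ = \ca{R}_{M_1}^+ \sqcup \ca{R}_{H_1/H_2}^+$ (restricting roots of $G$ to $T$), so the identity is visible on the level of the maximal torus and then holds in $R(H_2,\omega_{G/H_2})$ by the injectivity of $j_H^*$ from Lemma \ref{lemma;twist-module}\eqref{item;extension-invariant}.

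For part \eqref{item;functorial}, I would check that $i_{1,*}\circ k_* = i_{2,*}$ after applying $j_G^*$, which is injective (it identifies $R(G,\sigma)$ with $R(T,\sigma)^{W_G}$). By Theorem \ref{theorem;weyl-character} applied twice, $j_G^* i_{1,*}(k_*(a))$ equals $J_{M_1}(\d_{H_1} j_{H_1}^*(k_*(a)))/\d_G$, and $j_{H_1}^* k_*(a)$ in turn — now applying the character formula for the pair $(H_1,H_2)$ and restricting further to $T$ — equals $J_{M_1'}(\d_{H_2} j_{H_2}^*(a))/\d_{H_1}$, where $M_1' = H_1/H_2$ and $J_{M_1'}$ is the relative antisymmetrizer for $W_{H_1}/W_{H_2}$. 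The $\d_{H_1}$ factors cancel, and the key combinatorial fact $J_{M_2} = J_{M_1}\cdot J_{M_1'}$ — the analogue of \eqref{equation;anti-invariant}, coming from the unique factorization $W^{H_2} = W^{H_1}\cdot (W^{H_2}\cap W_{H_1})$ of coset representatives — assembles the two steps into $J_{M_2}(\d_{H_2} j_{H_2}^*(a))/\d_G = j_G^* i_{2,*}(a)$. One must be slightly careful that the relative antisymmetrizer $J_{M_1'}\in\Z[W_{H_1}]$ commutes appropriately with $j_{H_1}^*$ and with the $W^{H_1}$-summation, but since all the operators in question are $\Z[W]$-linear for the relevant Weyl groups acting on $R(T)_{\text{frac}}$, this is routine.

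I expect the main obstacle to be the careful handling of the orientation twists: one must verify that the equivalence of extensions $\omega_{G/H_1}+\omega_{H_1/H_2}\sim\omega_{G/H_2}$ from Lemma \ref{lemma;functor} is exactly the one that makes the spinor-module tensor factorization equivariant, so that the identification of modules in the left column of the diagram is consistent with the multiplicativity in \eqref{item;multiplicative}. In other words, the content is checking that the isomorphism \eqref{equation;big-extension} intertwines the $\Z/2\Z$-graded Clifford module structures, not merely the underlying groups. Once that compatibility is pinned down, everything else is either the character formula (Theorem \ref{theorem;weyl-character}) applied in stages, or the Weyl-group coset combinatorics generalizing \eqref{equation;anti-invariant}, both of which are straightforward.
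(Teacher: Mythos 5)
Your proof is correct, but for part \eqref{item;functorial} it takes a genuinely different route from the paper's. For part \eqref{item;multiplicative} your ``computational alternative'' is exactly the paper's argument: the authors simply cite Lemma \ref{lemma;weight-euler}\eqref{item;euler-twist}, so that after a level shift $j_{H_2}^*$ of each Euler class is a product of factors $(1-e^\alpha)$ and the identity follows from $\ca{R}_{M_2}^+=\ca{R}_{M_1}^+\sqcup\ca{R}_{\n}^+$ together with injectivity of restriction to $T$; your primary spinor-module argument $S(\m_2)\cong S(\m_1)\hat\otimes S(\n)$ is a more conceptual variant that explains \emph{why} Lemma \ref{lemma;functor} identifies the twists the way it does, at the cost of having to verify the graded-tensor-product equivariance you flag at the end. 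For part \eqref{item;functorial} the paper does not touch the character formula at all: it uses the transitivity of \emph{formal} induction, $i_{2,!}=i_{1,!}\circ k_!$ (a consequence of Frobenius reciprocity, \eqref{equation;composition}), the projection formula $i_D(a)=i_!(\e(D)a)$ of Theorem \ref{theorem;linear}\eqref{item;push}, and the multiplicativity from part \eqref{item;multiplicative}, yielding a four-line computation $i_{2,*}(a)=i_{2,!}(\e(\Dirac_{G/H_2})a)=i_{1,!}(\e(\Dirac_{G/H_1})\,k_!(\e(\Dirac_{H_1/H_2})a))=i_{1,*}k_*(a)$. Your route instead applies Theorem \ref{theorem;weyl-character} in stages after the injective restriction $j_G^*$ and reduces everything to the factorization $J_{M_2}=J_{M_1}\circ J_{H_1/H_2}$ coming from the unique decomposition $W^{H_2}=W^{H_1}\cdot W_{H_1}^{H_2}$; this is valid (the decomposition and the consistency of $\det$ on $W_{H_1}\subset W_G$ are standard, and the fraction-field manipulations are harmless since the relevant representation rings are domains), but note it is slightly circular in spirit, since the paper's proof of the character formula itself and of Theorem \ref{theorem;irreducible} leans on the functoriality statement rather than the other way around. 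What the paper's approach buys is brevity and independence from the Weyl-group combinatorics; what yours buys is an explicit verification on characters and the three-group analogue of \eqref{equation;anti-invariant}, which the paper never states but which is implicit in its framework.
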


\begin{proof}
\eqref{item;multiplicative} follows from Lemma
\ref{lemma;weight-euler}\eqref{item;euler-twist}.  Let $a\in
R\bigl(H_2,\sigma+\omega_{G/H_2}\bigr)$.  It follows from
\eqref{equation;composition} that the formal pushforward homomorphism
is functorial in the sense that $i_{2,!}=i_{1,!}\circ k_!$.  Hence,
using Theorem \ref{theorem;linear}\eqref{item;push}, the
$R(H_1)$-linearity of the formal pushforward, and the multiplicativity
of the Euler class, we obtain
\begin{align*}
i_{2,*}(a)&=i_{2,!}\bigl(\e(\Dirac_{G/H_2})a\bigr)
\\
&=i_{1,!}k_!\bigl(k^*(\e(\Dirac_{G/H_1}))\e(\Dirac_{H_1/H_2})a\bigr)
\\
&=i_{1,!}\bigl(\e(\Dirac_{G/H_1})k_!(\e(\Dirac_{H_2/H_1})a)\bigr)
\\
&=i_{1,*}k_*(a)
\end{align*}
for all $a$, so $i_{2,*}=i_{1,*}\circ k_*$.
\end{proof}

\subsection{$\Spin^c$ versus twisted $\Spin^c$}\label{subsection;versus}

We call a character $\gamma\in\X(H)$%
\glossary{gamma@$\gamma$, c-spinorial character of $H$}
\emph{c-spinorial} if the homomorphism $\gamma\times\eta\colon
H\to\U(1)\times\SO(\m)$ lifts to a homomorphism $\hat{\eta}\colon
H\to\Spin^c(\m)$.  Such characters $\gamma$ exist if and only if $M$
admits a $G$-invariant $\Spin^c$-structure, and they classify such
structures up to equivalence.  See Appendix \ref{section;spin} and
\cite{landweber-sjamaar;spin} for a discussion of invariant
$\Spin^c$-structures.  Let $\gamma$ be a c-spinorial character.  Then
we have the untwisted elliptic operator%
\glossary{dh@$\dirac$, $\Spin^c$ Dirac operator}
$$
\dirac=\dirac_\gamma\colon\Gamma\bigl(M,G\times^HS^0\bigr)
\overset\nabla\longto\Gamma\bigl(M,G\times^H(\m\times
S^0)\bigr)\overset{\cliff}\longto\Gamma\bigl(M,G\times^HS^1\bigr),
$$
called the \emph{$\Spin^c$ Dirac operator} on $M$ associated with
$\gamma$.  Coupling the Dirac operator with untwisted $H$-modules
gives rise to an induction map
\glossary{i_Dirac@$i_\dirac$, $\Spin^c$-induction}
$$i_\dirac=i_{\dirac_\gamma}\colon R(H)\longto R(G).$$

\begin{theorem}\label{theorem;versus}
Let $\gamma$ be a c-spinorial character of $H$.  Then the
$R(H)$-module $R(H,\omega)$ is freely generated by $e^{\gamma/2}$, and
the diagram
$$
\xymatrix{
R(H)\ar[r]^-{i_\dirac}\ar[d]_-{e^{\gamma/2}}^-{\cong}&R(G)
\\
R(H,\omega)\ar[ur]_-{i_*}
}
$$
commutes.  Hence
$j_G^*i_\dirac(a)=J_M\bigl(e^{\gamma/2}\d_Ha\bigr)\big/\d_G$ for all
$a\in R(H)$.
\end{theorem}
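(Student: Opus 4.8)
The plan is to reduce the theorem to an identity of Euler classes and then invoke Theorems~\ref{theorem;linear} and~\ref{theorem;weyl-character}. The key observation is that, because $\gamma$ is c-spinorial, the lift $\hat\eta\colon H\to\Spin^c(\m)$ of $\gamma\times\eta$ makes the spinor module $S=S^0\oplus S^1$ into an honest $\Z/2\Z$-graded $H$-equivariant $\Cl(\m)$-module $\hat\eta^*S$, and the $\Spin^c$ Dirac operator $\dirac_\gamma$ of \S\,\ref{subsection;versus} is exactly the $(\sigma,0)$-twisted operator on $M$ (Definition~\ref{definition;twist-operator} with $\tau=0$) built from $U=\hat\eta^*S$ and the Levi-Civita connection. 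Hence $\e(\dirac_\gamma)=[\hat\eta^*S^0]-[\hat\eta^*S^1]\in R(H)$ is the image of $\e(\Dirac_M)=[S^0]-[S^1]\in R(H,\omega)$ under restriction of scalars along $\hat\eta$.

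I would then identify this restriction with multiplication by $e^{\gamma/2}$ by computing on the maximal torus, reusing the weight bookkeeping from the proof of Lemma~\ref{lemma;weight-euler}. The lift $\hat\eta$ restricts to a lift $T\to\hat{\group{T}}$ of $\eta\colon T\to\group{T}$, so $\hat\eta^*(\eps_k)=\alpha_k$; moreover the $\U(1)$-valued component $\delta$ of the map $\Spin^c(\m)\to\U(1)\times\SO(\m)$ (with respect to which c-spinoriality is defined) is $\delta([z,g])=z^2$, which pulls back to $2\hat\eps+\sum_k\eps_k$ on $\hat{\group{T}}$, so the c-spinorial identity $\delta\circ\hat\eta=\gamma$ forces $2\hat\eta^*(\hat\eps)+2\rho_M=\gamma$, i.e.\ $\hat\eta^*(\hat\eps)=\gamma/2-\rho_M$. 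Since the basis vector $v_{\group{i}}$ of $S=\Lambda_\C(\m)$ has $\hat{\group{T}}$-weight $\hat\eps+\eps_{\group{i}}$, its $T$-weight is $\gamma/2-\rho_M+\alpha_{\group{i}}$, whence
$$
j_H^*\bigl(\e(\dirac_\gamma)\bigr)
=e^{\gamma/2-\rho_M}\prod_{\alpha\in\ca{R}_M^+}(1-e^\alpha)
=e^{\gamma/2}\,j_H^*\bigl(\e(\Dirac_M)\bigr)
$$
in the identification $R(H,\omega)=\bigl(R(T)\cdot e^{\rho_M}\bigr)^{W_H}$ of Remark~\ref{remark;shift} (the second equality is Lemma~\ref{lemma;weight-euler}\eqref{item;euler-twist}). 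As $j_H^*\colon R(H)\to R(T)$ is injective, this gives the identity $\e(\dirac_\gamma)=e^{\gamma/2}\e(\Dirac_M)$.

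The three assertions follow readily. Since $\gamma\in\X(H)$, the half-character $\gamma/2\in\rho_M+\X(T)$ is $W_H$-invariant and $e^{\gamma/2}$ is a unit of $\Z[\tfrac12\X(T)]$, so multiplication by $e^{\gamma/2}$ carries $R(H)=R(T)^{W_H}$ isomorphically onto $\bigl(R(T)\cdot e^{\rho_M}\bigr)^{W_H}=R(H,\omega)$; in particular $R(H,\omega)$ is free of rank one on $e^{\gamma/2}$. Applying Theorem~\ref{theorem;linear}\eqref{item;push} to $\Dirac_M$ and then to $\dirac_\gamma$, we get for all $a\in R(H)$
$$
i_*\bigl(e^{\gamma/2}a\bigr)=i_!\bigl(\e(\Dirac_M)\,e^{\gamma/2}a\bigr)
=i_!\bigl(\e(\dirac_\gamma)\,a\bigr)=i_\dirac(a),
$$
which is the commutativity of the triangle. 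Substituting $e^{\gamma/2}a$ into the character formula of Theorem~\ref{theorem;weyl-character} and using $j_H^*(e^{\gamma/2}a)=e^{\gamma/2}j_H^*(a)$ then yields $j_G^*i_\dirac(a)=J_M\bigl(e^{\gamma/2}\d_H a\bigr)\big/\d_G$.

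The only delicate point is the weight computation in the second paragraph: one must pin down $\hat\eta^*(\hat\eps)$ exactly, which means keeping track of the factor of two in $\delta\colon\Spin^c(\m)\to\U(1)$ and of the bookkeeping between the central generator $\eps_0$ of Lemma~\ref{lemma;weight-euler} and the level shift $\eps_0\mapsto0$ of Remark~\ref{remark;shift}, so that the answer is $\gamma/2-\rho_M$ and not some competing half-integral weight. The ancillary claims — that $\dirac_\gamma$ genuinely satisfies Definition~\ref{definition;twist-operator} with $\tau=0$, and that the modules in sight carry the stated levels — are routine given the development in \S\S\,\ref{section;induction}--\ref{section;dirac}.
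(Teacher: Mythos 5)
Your proof is correct and follows the same route as the paper: everything reduces to the two facts that $e^{\gamma/2}$ freely generates $R(H,\omega)$ and that $\e(\dirac_\gamma)=e^{\gamma/2}\e(\Dirac)$, after which Theorem \ref{theorem;linear}\eqref{item;push} (equivalently Theorem \ref{theorem;induction}) and the character formula finish the job. The only difference is one of packaging: where the paper cites Proposition \ref{proposition;spin-c}\eqref{item;lifting} and Proposition \ref{proposition;principal-chern-euler}\eqref{item;euler} from the appendix, you re-derive them inline via the weight computation $\hat\eta^*(\hat\eps)=\tfrac12 j_H^*(\gamma)-\rho_M$, which is exactly the content of those propositions.
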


\begin{proof}
As in Remark \ref{remark;shift}, we identify $R(H,\omega)$ with the
$R(H)$-module
$$
\bigl(R(T)\cdot e^{\rho_M}\bigr)^{W_H}\cong\bigl(\Z[\X(T)]\cdot
e^{\rho_M}\bigr)^{W_H}.
$$
By Proposition \ref{proposition;spin-c}\eqref{item;lifting}, the
element $\rho_M-\frac12j_H^*(\gamma)$ is in $\X(T)$, and therefore
$e^{j_H^*(\gamma)/2}$ generates the $\Z[\X(T)]$-module $\Z[\X(T)]\cdot
e^{\rho_M}$.  Being the restriction of a character of $H$, the element
$e^{j_H^*(\gamma)/2}$ is invariant under $W_H$.  Therefore
$$
\bigl(\Z[\X(T)]\cdot e^{\rho_M}\bigr)^{W_H}=\bigl(\Z[\X(T)]\cdot
e^{j_H^*(\gamma)/2}\bigr)^{W_H}=\Z[\X(T)]^{W_H}\cdot
e^{j_H^*(\gamma)/2},
$$
which implies that $e^{\gamma/2}$ generates $R(H,\omega)$.  Since is
$e^{\gamma/2}$ a unit, this shows that $R(H,\omega)$ is free.  By
Proposition \ref{proposition;principal-chern-euler}\eqref{item;euler},
the Euler class of $\dirac_\gamma$ is $e^{\gamma/2}\e(\Dirac)$.
Therefore, by Theorem \ref{theorem;induction},
$i_\dirac(a)=i_*(e^{\gamma/2}a)$ for all $a\in R(H)$.  The last
assertion now follows from the character formula, Theorem
\ref{theorem;weyl-character}.
\end{proof}

The best-known special cases of $\Spin^c$-induction are
$\Spin$-induction and holomorphic induction.  The comparison with
twisted $\Spin^c$-induction is as follows.

\begin{example}\label{example;spin-induction}
Suppose that $M$ possesses a $G$-invariant $\Spin$-structure.  This is
the case if and only if $\rho_M\in\X(T)$.  The c-spinorial character
of the corresponding $\Spin^c$-structure is $\gamma=0$.  (See Example
\ref{example;spin}.)  Therefore, in this case twisted
$\Spin^c$-induction is the same as $\Spin$-induction.  (However,
twisted induction is defined even if $M$ has no invariant
$\Spin$-structure.)  The character formula reads
$j_G^*i_\dirac(a)=J_M(\d_Ha)\big/\d_G$ for all $a\in R(H)$.  In
particular, setting $H=T$ and supposing that $\rho_G\in\X(T)$, we find
$j_G^*j_{G,\dirac}(a)=J_G(a)/\d_G$ for all $a\in R(T)$.
\end{example}

\begin{example}\label{example;holomorphic-induction}
Suppose that $M$ possesses a $G$-invariant complex structure.  This is
the case if and only if $H$ is the centralizer of a subtorus of $T$.
The c-spinorial character for the associated $\Spin^c$-structure is
given by $j_H^*(\gamma)=2\rho_M$.  (See Example
\ref{example;complex}.)  Therefore holomorphic induction is given by
$i_\dirac(a)=i_*(e^{\rho_M}a)$.  Holomorphic induction depends on the
choice of the invariant complex structure (in other words, the choice
of the basis of the root system), but twisted induction does not.  The
character formula is $j_G^*i_\dirac(a)=J_M(e^{\rho_M}\d_Ha)/\d_G$ for
all $a\in R(H)$.  In particular, setting $H=T$ gives
$j_G^*j_{G,\dirac}(a)=J_G(e^{\rho_G}a)/\d_G$ for all $a\in R(T)$,
which is the usual Weyl character formula.
\end{example}

\subsection{Irreducibles}\label{subsection;irreducible}

The pushforward of an irreducible $H$-module to $G$ is given by a
well-known Borel-Weil-Bott type formula, Theorem
\ref{theorem;irreducible} below.  Such a formula was obtained for
homogeneous $\Spin$-manifolds by Slebarski \cite[Part~II, \S\,4,
Theorem~2]{slebarski;dirac-homogeneous} and for homogeneous
$\Spin^c$-manifolds by
Landweber~\cite{landweber;harmonic-spinors-homogeneous},%
~\cite{landweber;twisted}.  Versions for Lie algebra representations
were given by Gross et al.\ \cite{gross-kostant-ramond-sternberg} and
Kostant \cite{kostant;cubic-dirac-multiplet},
\cite{kostant;bott-borel-weil-multiplet}.  Our modest contribution is
to state a version of the formula which holds at the group level, but
which does not hypothesize the existence of a $\Spin^c$-structure.
Since it is an equality of characters, the formula actually follows
from the Lie algebra version.  To illustrate our techniques we will
derive it from the functoriality theorem, Theorem
\ref{theorem;functor}.

An irreducible $G^{(\sigma)}$-module $V$ is determined up to
isomorphism by its highest weight, which is a dominant character
$\lambda\in\X\bigl(T^{(\sigma)}\bigr)$.  If $V$ is of level $1$, then
its highest-weight space $V_\lambda$ is a one-dimensional
$T^{(\sigma)}$-module of level $1$.  We will call such a $\lambda$ a
\emph{$G$-dominant character of level $1$}, and denote the module $V$
by $V_G(\lambda,\sigma)$.%
\glossary{V@$V_G(\lambda,\sigma)$, irreducible $G^{(\sigma)}$-module
of level $1$ with highest weight $\lambda$}
If $\sigma=0$, we write $V_G(\lambda,\sigma)=V_G(\lambda)$.

\begin{theorem}\label{theorem;irreducible}
Let $\mu$ be an $H$-dominant level $1$ character of
$T^{(\sigma+\omega_M)}$.  There exists at most one $w\in W^H$ such
that $w(\mu+\rho_H)-\rho_G$ is $G$-dominant.  If no such $w$ exists,
then $i_*([V_H(\mu,\sigma+\omega_M)])=0$; if $w$ exists, then
$$
i_*([V_H(\mu,\sigma+\omega_M)])
=\det(w)[V_G(w(\mu+\rho_H)-\rho_G,\sigma)].
$$
\end{theorem}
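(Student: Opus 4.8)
The plan is to deduce the formula from the character formula of Theorem~\ref{theorem;weyl-character} combined with the classical Weyl character formula, applied once to $H$ and once to $G$. Fix $a=[V_H(\mu,\sigma+\omega_M)]$. Using the level-shift identifications of Remark~\ref{remark;shift} to pass between level-$1$ characters of $T^{(\sigma+\omega_M)}$ and elements of $R(T)\,e^{\rho_M}$, the (twisted) Weyl character formula for the reductive group $H$ gives $\d_H\,j_H^*(a)=J_H(e^{\mu+\rho_H})$, where $\d_H=J_H(e^{\rho_H})$ is the Weyl denominator of $H$. Substituting this into Theorem~\ref{theorem;weyl-character} and using the factorization $J_G=J_MJ_H$ from \eqref{equation;anti-invariant} yields
\begin{equation*}
j_G^*i_*(a)=\frac{J_M\bigl(J_H(e^{\mu+\rho_H})\bigr)}{\d_G}=\frac{J_G(e^{\mu+\rho_H})}{\d_G}.
\end{equation*}
This reduces the whole statement to a standard computation of $J_G(e^{\mu+\rho_H})/\d_G$ inside the anti-invariant $R(G)$-module of Proposition~\ref{proposition;anti-invariant}.

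Next I would split into cases according to whether $\mu+\rho_H$ is regular for $W_G$. If $\mu+\rho_H$ lies on a wall, then some reflection fixes it and $J_G$ annihilates it, so $j_G^*i_*(a)=0$; since $j_G^*$ is injective (Lemma~\ref{lemma;twist-module}) this gives $i_*(a)=0$. If $\mu+\rho_H$ is $W_G$-regular, let $w_0$ be the unique element of $W_G$ with $w_0(\mu+\rho_H)$ strictly $G$-dominant, and set $\lambda=w_0(\mu+\rho_H)-\rho_G$, a $G$-dominant level-$1$ character of $T^{(\sigma)}$. Anti-invariance of $J_G$ gives $J_G(e^{\mu+\rho_H})=\det(w_0)\,J_G(e^{\lambda+\rho_G})=\det(w_0)\,\d_G\,j_G^*([V_G(\lambda,\sigma)])$, the last step being the Weyl character formula for $G$ (as in Example~\ref{example;holomorphic-induction}) together with $\d_G=J_G(e^{\rho_G})$ from \eqref{equation;weyl-denominator}. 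Dividing by $\d_G$, cancelling, and applying injectivity of $j_G^*$ once more gives $i_*(a)=\det(w_0)[V_G(w_0(\mu+\rho_H)-\rho_G,\sigma)]$.

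It remains to translate the condition ``$w_0\in W_G$'' into the statement in terms of $W^H$. Uniqueness is immediate: if $w_1,w_2\in W_G$ both make $w_i(\mu+\rho_H)-\rho_G$ $G$-dominant, then $w_1(\mu+\rho_H)$ and $w_2(\mu+\rho_H)$ lie in the open dominant chamber and are $W_G$-conjugate, hence equal, whence $w_1=w_2$ since a regular weight has trivial $W_G$-stabilizer. To see that $w_0$ lies in $W^H$, write $w_0=w'w''$ with $w'\in W^H$ and $w''\in W_H$; if $w''\ne 1$ then $w''(\mu+\rho_H)$ fails to be $H$-dominant (because $\mu$ is $H$-dominant, so $\mu+\rho_H$ is strictly $H$-dominant), so $\langle w''(\mu+\rho_H),\alpha^\vee\rangle<0$ for some $\alpha\in\ca{R}_H^+$, and then $\langle w_0(\mu+\rho_H),(w'\alpha)^\vee\rangle=\langle w''(\mu+\rho_H),\alpha^\vee\rangle<0$ with $w'\alpha\in\ca{R}_G^+$, contradicting $G$-dominance of $w_0(\mu+\rho_H)$. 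Hence $w''=1$ and $w_0=w'\in W^H$; and in the singular case the same strict-dominance argument shows no $w\in W^H$ can make $w(\mu+\rho_H)-\rho_G$ $G$-dominant, so the dichotomy matches the theorem. (Alternatively, as the text indicates, one can run this through Theorem~\ref{theorem;functor} with $H_2=T\subset H_1=H$, writing $i_*$ as twisted $\Spin^c$-induction from $T$ to $H$ followed by that from $H$ to $G$ and invoking Borel--Weil--Bott for the flag varieties $H/T$ and $G/T$.)

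The step I expect to cause the most friction is the bookkeeping of the orientation twist $\omega_M$ and the several $\rho$-shifts: one must check that, under the level shifts of Remark~\ref{remark;shift} and the equivalence $\omega_{G/H}+\omega_{H/T}\sim\omega_{G/T}$, a level-$1$ character $\mu$ of $T^{(\sigma+\omega_M)}$ sits inside $\rho_M+\X(T^{(\sigma)})$ in exactly the way needed for the Weyl character formulas of $H$ and of $G$ to combine as above, and that $V_H(\mu,\sigma+\omega_M)$ really has $j_H^*$-character $J_H(e^{\mu+\rho_H})/\d_H$ in this twisted normalization. By contrast the Lie-algebra content is just the short manipulation of $J_G$ in the first two paragraphs.
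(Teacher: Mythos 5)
Your proof is correct, but it takes a genuinely different route from the paper's. The paper deliberately derives the theorem from the functoriality of twisted induction (Theorem~\ref{theorem;functor}): it first records the Borel--Weil--Bott statement for holomorphic induction $j_{G,\dirac}$ (citing Demazure), converts it to the statement \eqref{equation;borel-weil-bott} for $j_{G,*}$, observes that $[V_H(\mu,\sigma+\omega_M)]=j_{H,*}(e^{\mu+\rho_H})$, and then applies $i_*j_{H,*}=j_{G,*}$ --- exactly the ``induction in stages'' alternative you mention only parenthetically at the end. You instead work entirely at the level of characters: you feed the Weyl character formula for $H$ into Theorem~\ref{theorem;weyl-character}, collapse $J_MJ_H$ to $J_G$ via \eqref{equation;anti-invariant}, and finish with the standard regular/singular dichotomy for $J_G(e^{\mu+\rho_H})/\d_G$, using the injectivity of $j_G^*$ on $R(G,\sigma)$ from Lemma~\ref{lemma;twist-module}\eqref{item;extension-invariant} to pass back from characters to classes. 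The paper itself concedes that ``since it is an equality of characters, the formula actually follows from the Lie algebra version,'' so your route is the one the authors chose \emph{not} to write out, preferring to showcase their functoriality machinery. Your version is more self-contained (it uses only results already proved in the paper plus the classical Weyl character formula and a short antisymmetrizer computation, and it proves uniqueness of $w$ in all of $W_G$, not just $W^H$), at the cost of having to track the $\rho$-shifts and the equivalence $\omega_{G/H}+\omega_{H/T}\sim\omega_{G/T}$ by hand --- the friction point you correctly identify; the paper's version buries that bookkeeping inside Theorem~\ref{theorem;functor} and Lemma~\ref{lemma;functor}, and makes the reduction to the classical Borel--Weil--Bott theorem structurally transparent.
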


\begin{proof}
The Borel-Weil-Bott theorem for the group $G$ implies the following
assertion: for each $\kappa\in\X(T)$ there is at most one $v\in W_G$
such that $v(\kappa+\rho_G)-\rho_G$ is $G$-dominant; if no such $v$
exists, then $j_{G,\dirac}(e^\kappa)=0$; if $v$ exists, then
$$
j_{G,\dirac}(e^\kappa)=\det(v)[V_G(v(\kappa+\rho_G)-\rho_G)].
$$
Here $j_{G,\dirac}\colon R(T)\to R(G)$ is the holomorphic induction
map.  (See e.g.\ \cite{demazure;simple-bott}.)  Let us apply this to
the group $G^{(\sigma)}$ and recall that holomorphic induction
preserves the level.  (This follows from Lemma \ref{lemma;sections}.)
Let us also use the formula
$j_{G,*}(e^\lambda)=j_{G,\dirac}(e^{\lambda-\rho_G})$ of Example
\ref{example;holomorphic-induction}.  The following statement results:
for each level $1$ character $\lambda$ of $T^{(\sigma)}$ there is at
most one $v\in W_G$ such that $v(\lambda)-\rho_G$ is $G$-dominant; if
no such $v$ exists, then $j_{G,*}(e^\lambda)=0$; if $v$ exists, then
\begin{equation}\label{equation;borel-weil-bott}
j_{G,*}(e^\lambda)=\det(v)[V_G(v(\lambda)-\rho_G,\sigma)].
\end{equation}
This statement applies to the group $H^{(\sigma+\omega_M)}$.  Taking
an $H$-dominant level $1$ character $\mu$ of $T^{(\sigma+\omega_M)}$
and putting $\lambda=\mu+\rho_H$, we find $v=1$ and
$$[V_H(\mu,\sigma+\omega_M)]=j_{H,*}(e^{\mu+\rho_H}).$$
Applying $i_*$ to both sides and using Theorem \ref{theorem;functor}
yields
$$
i_*([V_H(\mu,\sigma+\omega_M)])=i_*j_{H,*}(e^{\mu+\rho_H})
=j_{G,*}(e^{\mu+\rho_H}).
$$
The result now follows from \eqref{equation;borel-weil-bott} plus the
observation that, if $w\in W_G$ maps the strictly $H$-dominant
character $\lambda=\mu+\rho_H$ to a strictly $G$-dominant element
$w(\lambda)$, then $w\in W^H$.
\end{proof}

\begin{example}\label{example;irreducible}
The character $\rho_M$ is a highest weight of the spinor module $S$
(see~\eqref{equation;weight}) and hence is $H$-dominant.  Therefore
$i_*([V_H(\rho_M,\omega_M)])=1$ by Theorem~\ref{theorem;irreducible}.
\end{example}

In the $\Spin$ case Kostant \cite{kostant;cubic-dirac-multiplet},
\cite{kostant;bott-borel-weil-multiplet} has established a refinement
of Theorem \ref{theorem;irreducible}, which amounts to a vanishing
theorem for the kernel or the cokernel of the Dirac operator (taken
with respect to a very special connection), and which yields a
$\Z/2\Z$-graded generalization of the classical Borel-Weil-Bott
theorem.  Kostant's assumption of the existence of a $\Spin$-structure
on $G/H$ can be removed by means of our techniques, but we will not
pursue that avenue here.

\subsection{Duality and induction}\label{subsection;duality}

There is a natural pairing between twisted representation modules
defined by means of twisted $\Spin^c$-induction.  As before, we let
$\tau$ be a central extension of $H$ by $\U(1)$.  Define
$$\ca{P}\colon R(H,\tau)\times R(H,\omega_M-\tau)\longto R(G)$$
by $\ca{P}(a_1,a_2)=i_*(a_1a_2)$ for $a_1\in R(H,\tau)$ and $a_2\in
R(H,\omega_M-\tau)$.  It follows from the $R(G)$-linearity of the
induction map $i_*$ that $\ca{P}$ is $R(G)$-bilinear.  Recall that
$A\spcheck$ denotes the dual of an $R(G)$-module $A$.  We call the
pairing \emph{nonsingular} if the two $R(G)$-linear maps
$$
\ca{P}^\sharp\colon R(H,\tau)\longto R(H,\omega_M-\tau)\spcheck,\qquad
R(H,\omega_M-\tau)\longto R(H,\tau)\spcheck
$$
induced by $\ca{P}$ are isomorphisms.  The map $\ca{P}^\sharp$ fits
into a diagram
\begin{equation}\label{equation;summand}
\vcenter{\xymatrix{
R(T)\ar[r]^-{\ca{P}_\dirac^\sharp}\ar@<0.5ex>[d]^\pi&
R(T)\spcheck\ar@<0.5ex>[d]^{\pi\spcheck}\\
R(H,\tau)\ar[r]^-{\ca{P}^\sharp}\ar@<0.5ex>[u]^\iota&
R(H,\omega_M-\tau)\spcheck\ar@<0.5ex>[u]^{\iota\spcheck}.
}}
\end{equation}
Here $\ca{P}_\dirac$ is the $R(G)$-bilinear pairing on $R(T)$ defined
by holomorphic induction (cf.\ Example
\ref{example;holomorphic-induction}),
$$
\ca{P}_\dirac(b_1,b_2)=j_{G,\dirac}(b_1b_2)=j_{G,*}(e^{\rho_G}b_1b_2)
$$
for $b_1$, $b_2\in R(T)$.  Choose a character $\mu$ of level $1$ of
the extended torus $T^{(\tau)}$.  Then $e^\mu$ is a free generator of
the $R(T)$-module $R(T,\tau)$ (see Example \ref{example;torus}), and
we define $\iota$, $\pi$, $\iota\spcheck$, and $\pi\spcheck$ to be the
compositions of the following $R(H)$-linear maps:
\begin{gather}
\label{equation;iota}
\iota\colon\xymatrix{R(H,\tau)\ar[r]^-{j_H^*}&
R(T,\tau)\ar[r]^-{e^{-\mu}}&R(T),}
\\
\label{equation;pi}
\pi\colon\xymatrix{R(T)\ar[r]^-{e^{\rho_H+\mu}}&
R(T,\omega_{H/T}+\tau)\ar[r]^-{j_{H,*}}&R(H,\tau),}
\\
\notag
\iota\spcheck\colon\xymatrix@C=35pt{R(H,\omega_M-\tau)\spcheck
\ar[r]^-{^t\mspace{-1mu}(j_{H,*})}&
R(T,\omega_{G/T}-\tau)\spcheck\ar[r]^-{^t\mspace{-1mu}(e^{\rho_G-\mu})}&
R(T)\spcheck,}
\\
\notag
\pi\spcheck\colon\xymatrix@C=35pt{R(T)\spcheck
\ar[r]^-{^t\mspace{-1mu}(e^{-\rho_M+\mu})}&
R(T,\omega_M-\tau)\spcheck\ar[r]^-{^t\mspace{-1mu}(j_H^*)}&
R(H,\omega_M-\tau)\spcheck.}
\end{gather}
Here $e^\lambda$ denotes ``multiplication by $e^\lambda$'' and
$^t\mspace{-1mu}(e^\lambda)$ the transpose operator, etc.  We show
next that the diagram \eqref{equation;summand} commutes in two
different ways and that the $\iota$'s are sections of the $\pi$'s.

\begin{lemma}\label{lemma;commute}
We have
$$
\iota\spcheck\circ\ca{P}^\sharp=\ca{P}_\dirac^\sharp\circ\iota,\qquad
\pi\spcheck\circ\ca{P}_\dirac^\sharp=\ca{P}^\sharp\circ\pi,\qquad
\pi\circ\iota=\id,\qquad\pi\spcheck\circ\iota\spcheck=\id.
$$
In particular the $R(H)$-module $R(H,\tau)$ is isomorphic to a direct
summand of $R(T)$.
\end{lemma}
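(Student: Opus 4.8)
My plan is this. The four maps $\iota$, $\pi$, $\iota\spcheck$, $\pi\spcheck$ are composites of $R(H)$-linear maps, so each asserted identity is an identity of $R(H)$-linear maps, which I would check by evaluating on a general element. Granting the third identity $\pi\circ\iota=\id$, the $R(H)$-linear map $\iota$ is a split monomorphism, whence $R(T)\cong\iota\bigl(R(H,\tau)\bigr)\oplus\ker\pi\cong R(H,\tau)\oplus\ker\pi$ as $R(H)$-modules, which gives the final assertion. The ingredients I would use are the projection formula for twisted $\Spin^c$-induction (an instance of the $R(\bullet)$-linearity of the index, Lemma \ref{lemma;elliptic}\eqref{item;twist}, applied over a large enough ambient central extension); induction in stages for the chain $T\subset H\subset G$, which gives $i_*\circ j_{H,*}=j_{G,*}$ on $R(T,\omega_{G/T})$ (Theorem \ref{theorem;functor}, together with $\omega_{G/H}+\omega_{H/T}\sim\omega_{G/T}$ from Lemma \ref{lemma;functor}); and the character formula, Theorem \ref{theorem;weyl-character}.

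For the two section identities I would first unwind \eqref{equation;iota} and \eqref{equation;pi}: the factors $e^{-\mu}$ and $e^{\rho_H+\mu}$ cancel, leaving $\pi\circ\iota$ equal to the endomorphism $a\mapsto j_{H,*}\bigl(e^{\rho_H}j_H^*(a)\bigr)$ of $R(H,\tau)$; transposing the displayed definitions of $\iota\spcheck$ and $\pi\spcheck$ and using $\rho_G-\rho_M=\rho_H$, one finds that $\pi\spcheck\circ\iota\spcheck$ is the transpose of the endomorphism $a\mapsto j_{H,*}\bigl(e^{\rho_H}j_H^*(a)\bigr)$ of $R(H,\omega_M-\tau)$. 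So both identities reduce to the single claim that
\[j_{H,*}\bigl(e^{\rho_H}j_H^*(a)\bigr)=a\qquad\text{for every central }\U(1)\text{-extension }\upsilon\text{ of }H\text{ and every }a\in R(H,\upsilon).\]
To prove this I would specialize Theorem \ref{theorem;weyl-character} to the pair $(H,T)$ with twist $\upsilon$, so that $W^H$ becomes $W_H$, $J_M$ becomes $J_H$, and the Weyl denominator of the torus is $1$; this yields $j_H^*j_{H,*}(c)=J_H(c)/\d_H$ for $c\in R(T,\upsilon+\omega_{H/T})$. Taking $c=e^{\rho_H}j_H^*(a)$, pulling the $W_H$-invariant element $j_H^*(a)$ out of $J_H$ (Lemma \ref{lemma;twist-module}\eqref{item;extension-invariant}), and using the identity $J_H(e^{\rho_H})=\d_H$, I get $j_H^*j_{H,*}\bigl(e^{\rho_H}j_H^*(a)\bigr)=j_H^*(a)$, and the claim follows from the injectivity of $j_H^*$ on $R(H,\upsilon)$ (Lemma \ref{lemma;twist-module}\eqref{item;extension-invariant}).

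For the two commuting squares I would argue as follows. Fixing $a_1\in R(H,\tau)$ and $b_2\in R(T)$ and unwinding $\iota\spcheck$ together with $\ca{P}^\sharp(a_1)(x)=i_*(a_1x)$, I get
\[\bigl(\iota\spcheck\circ\ca{P}^\sharp\bigr)(a_1)(b_2)=i_*\bigl(a_1\cdot j_{H,*}(e^{\rho_G-\mu}b_2)\bigr).\]
Bringing $a_1$ inside via the projection formula (as $j_H^*(a_1)$), replacing $i_*\circ j_{H,*}$ by $j_{G,*}$ via Theorem \ref{theorem;functor}, collecting the factor $e^{\rho_G}$, and using $j_{G,*}(e^{\rho_G}x)=j_{G,\dirac}(x)$ (Example \ref{example;holomorphic-induction} with $H=T$) turns the right-hand side into $j_{G,\dirac}\bigl((e^{-\mu}j_H^*(a_1))\,b_2\bigr)=\ca{P}_\dirac\bigl(\iota(a_1),b_2\bigr)=\bigl(\ca{P}_\dirac^\sharp\circ\iota\bigr)(a_1)(b_2)$, proving $\iota\spcheck\circ\ca{P}^\sharp=\ca{P}_\dirac^\sharp\circ\iota$. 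For $\pi\spcheck\circ\ca{P}_\dirac^\sharp=\ca{P}^\sharp\circ\pi$ I would run the same steps in reverse: beginning with $\bigl(\pi\spcheck\circ\ca{P}_\dirac^\sharp\bigr)(b_1)(a_2)=j_{G,*}\bigl(e^{\rho_G}b_1\,e^{-\rho_M+\mu}j_H^*(a_2)\bigr)$, using $\rho_G-\rho_M=\rho_H$, then Theorem \ref{theorem;functor} backwards to write $j_{G,*}$ as $i_*\circ j_{H,*}$ on $R(T,\omega_{G/T})$, then the projection formula to pull $a_2$ back out of $j_{H,*}$, and ending at $i_*\bigl(\pi(b_1)\cdot a_2\bigr)=\bigl(\ca{P}^\sharp\circ\pi\bigr)(b_1)(a_2)$.

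The hard part will not be any individual step but the bookkeeping of twists: each product and pushforward above lives in a definite graded piece $R(\bullet,?)$, and one must keep verifying that $\tau$, $\omega_M$, $\omega_{H/T}$ and $\omega_{G/T}$ combine consistently (the use of $\omega_{G/H}+\omega_{H/T}\sim\omega_{G/T}$ being precisely what makes $i_*\circ j_{H,*}=j_{G,*}$ meaningful), and that the projection formula and Theorem \ref{theorem;functor} are applied on the homogeneous piece where they are valid. A second point requiring care is the transposition behind $\iota\spcheck$ and $\pi\spcheck$: recognizing these as the transposes of ``$\iota$, $\pi$ with $(\tau,\mu)$ replaced by $(\omega_M-\tau,\,\rho_M-\mu)$'' is what makes $\pi\spcheck\circ\iota\spcheck=\id$ cost nothing beyond $\pi\circ\iota=\id$.
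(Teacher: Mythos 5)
Your proposal is correct and follows essentially the same route as the paper: both proofs unwind the definitions of $\iota$, $\pi$ and their transposes, and reduce everything to the projection formula, the functoriality $j_{G,*}=i_*\circ j_{H,*}$ of Theorem \ref{theorem;functor}, and the normalization $j_{H,*}\bigl(e^{\rho_H}j_H^*(a)\bigr)=a$. The only (harmless) variation is that you derive this last identity from the character formula for the pair $(H,T)$ together with the injectivity of $j_H^*$, whereas the paper uses $R(H)$-linearity of $j_{H,*}$ plus the fact $j_{H,*}(e^{\rho_H})=1$; you also spell out the ``similar'' cases the paper leaves to the reader.
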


\begin{proof}
Let $a\in R(H,\tau)$ and $b\in R(T)$.  Then
$$
\iota\spcheck(\ca{P}^\sharp(a))(b)
=\ca{P}^\sharp(a)(j_{H,*}(e^{\rho_G-\mu}b))
=i_*(aj_{H,*}(e^{\rho_G-\mu}b))
$$
and
$$
\ca{P}_\dirac^\sharp(\iota(a))(b)
=j_{G,*}(e^{\rho_G}e^{-\mu}j_H^*(a)b)
=i_*(j_{H,*}(e^{\rho_G-\mu}j_H^*(a)b))
=i_*(aj_{H,*}(e^{\rho_G-\mu}b)),
$$
where we used functoriality of induction, $j_{G,*}=i_*\circ j_{H,*}$
(Theorem \ref{theorem;functor}).  Therefore
$\iota\spcheck\circ\ca{P}^\sharp=\ca{P}_\dirac^\sharp\circ\iota$.  The
proof that
$\pi\spcheck\circ\ca{P}_\dirac^\sharp=\ca{P}^\sharp\circ\pi$ is
similar.  Let $a\in R(H)$.  Then
$$
\pi(\iota(a))=j_{H,*}(e^{\rho_H+\mu}e^{-\mu}j_H^*(a))
=j_{H,*}(e^{\rho_H}j_H^*(a))=j_{H,*}(e^{\rho_H})a=a,
$$
because $j_{H,*}(e^{\rho_H})=1$
The proof that $\pi\spcheck\circ\iota\spcheck=\id$ is similar.
\end{proof}

We can now establish a basic structure result for the twisted modules
$R(H,\tau)$, which generalizes theorems of Pittie
\cite{pittie;homogeneous-vector} and Steinberg \cite{steinberg;pittie}
and Kazhdan and Lusztig
\cite{kazhdan-lusztig;deligne-langlands-hecke}.

\begin{theorem}\label{theorem;nonsingular}
Assume that $\pi_1(G)$ is torsion-free.  Then the pairing $\ca{P}$ is
nonsingular and $R(H,\tau)$ is a free $R(G)$-module of rank
$\abs{W^H}$.
\end{theorem}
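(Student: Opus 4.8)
The plan is to bootstrap from the maximal-torus case using the commutative diagram \eqref{equation;summand} of Lemma \ref{lemma;commute}, which realizes $\ca{P}$ as a retract of the pairing $\ca{P}_\dirac$ on $R(T)$. So the first ingredient I would invoke is the base case $H=T$: since $\pi_1(G)$ is torsion-free, the Pittie--Steinberg theorem \cite{pittie;homogeneous-vector,steinberg;pittie} gives that $R(T)$ is a free $R(G)$-module of rank $\abs{W_G}$, and (as in \cite{harada-landweber-sjamaar;divided-differences-character}) that the pairing $\ca{P}_\dirac$ on $R(T)$ is nonsingular, i.e.\ $\ca{P}_\dirac^\sharp\colon R(T)\to R(T)\spcheck$ is an isomorphism of $R(G)$-modules. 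I would also use here that torsion-freeness of $\pi_1(G)$ makes $R(G)$ a polynomial ring over a ring of Laurent polynomials, so that every finitely generated projective $R(G)$-module is free.

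For the nonsingularity of $\ca{P}$ I would simply chase the diagram \eqref{equation;summand}. By Lemma \ref{lemma;commute} the maps $\iota$ and $\iota\spcheck$ are split injective while $\pi$ and $\pi\spcheck$ are split surjective. The relation $\iota\spcheck\circ\ca{P}^\sharp=\ca{P}_\dirac^\sharp\circ\iota$, whose right-hand side is injective, forces $\ca{P}^\sharp$ to be injective; the relation $\pi\spcheck\circ\ca{P}_\dirac^\sharp=\ca{P}^\sharp\circ\pi$, whose left-hand side is surjective, forces $\ca{P}^\sharp$ to be surjective. Hence $\ca{P}^\sharp\colon R(H,\tau)\to R(H,\omega_M-\tau)\spcheck$ is an isomorphism. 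Since $R(H,\tau)$ and $R(H,\omega_M-\tau)$ are direct summands of $R(T)$ by Lemma \ref{lemma;commute}, they are finitely generated projective and hence reflexive over $R(G)$, so the other adjoint of $\ca{P}$---which is the dual of $\ca{P}^\sharp$ followed by the biduality isomorphism---is an isomorphism as well. Therefore $\ca{P}$ is nonsingular.

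Finally, to identify $R(H,\tau)$ as a free $R(G)$-module of rank $\abs{W^H}$: being a direct summand of the free module $R(T)$ it is finitely generated projective, hence free, so only the rank remains, and that I would compute after the flat base change $R(G)\to F_G:=\Frac(R(G))$. Since $R(T)$ is a domain finite over $R(G)$, $R(T)\otimes_{R(G)}F_G$ is a field, necessarily $F_T:=\Frac(R(T))$, on which $W_G$ acts with $F_T^{W_H}=F_H:=\Frac(R(H))$ and $[F_H:F_G]=\abs{W^H}$. Using $R(H,\tau)\cong R(T,\tau)^{W_H}$ (Lemma \ref{lemma;twist-module}\eqref{item;extension-invariant} applied to $H$) and the fact that $W_H$-invariants commute with this flat base change, I get $R(H,\tau)\otimes_{R(G)}F_G\cong(F_Te^\mu)^{W_H}$, where $e^\mu$ generates the level-one line $R(T,\tau)$ over $R(T)$; the group $W_H$ acts on $F_Te^\mu$ semilinearly through the cocycle $w\mapsto e^{w\mu-\mu}$, so Hilbert's Theorem~90 for $F_T/F_H$ shows that this twisted line descends and that $(F_Te^\mu)^{W_H}$ is one-dimensional over $F_H$, hence $\abs{W^H}$-dimensional over $F_G$. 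This gives the rank. The real content lies in the maximal-torus base case---the nonsingularity of $\ca{P}_\dirac$ and the freeness of $R(T)$ over $R(G)$, the one place where the hypothesis on $\pi_1(G)$ genuinely enters---while everything downstream is the diagram chase together with the Galois-descent rank count.
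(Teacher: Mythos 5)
Your proof is correct, and the main line of argument coincides with the paper's: the nonsingularity of $\ca{P}$ is obtained exactly as you do it, by chasing the retract diagram \eqref{equation;summand} of Lemma \ref{lemma;commute} against the Kazhdan--Lusztig base case for $\ca{P}_\dirac^\sharp$ (the paper cites \cite[Proposition~1.6]{kazhdan-lusztig;deligne-langlands-hecke} directly rather than \cite{harada-landweber-sjamaar;divided-differences-character}), and freeness is obtained, as you say, from $R(H,\tau)$ being a finitely generated direct summand of $R(T)$ together with Quillen--Suslin for the mixed polynomial/Laurent polynomial ring $R(G)$. Two points where you diverge: for the second adjoint the paper simply replaces $\tau$ by $\omega_M-\tau$ and reuses the same argument, which is marginally cleaner than your reflexivity-plus-dualization step, though both are valid. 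More substantially, for the rank the paper factors $\rank_{R(G)}(R(H,\tau))=\rank_{R(G)}(R(H))\cdot\rank_{R(H)}(R(H,\tau))$ and quotes Lemma \ref{lemma;rank} from Appendix \ref{section;central}, which proves $\rank_{R(H)}(R(H,\tau))=1$ by Galois descent for the finite central subgroup of a covering group of $H$; your computation instead identifies $R(H,\tau)\otimes_{R(G)}\Frac(R(G))$ with $(\Frac(R(T))\,e^\mu)^{W_H}$ and applies Hilbert's Theorem 90 to the Galois extension $\Frac(R(T))/\Frac(R(H))$ with group $W_H$. Your route is self-contained for the case at hand and bypasses the appendix, at the cost of being specific to maximal-rank subgroups, whereas the paper's lemma is a general statement about twisted representation modules that is reused elsewhere; both descents are legitimate.
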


\begin{proof}
Kazhdan and Lusztig
\cite[Proposition~1.6]{kazhdan-lusztig;deligne-langlands-hecke} showed
that the map $\ca{P}_\dirac^\sharp$ is an isomorphism.  Together with
Lemma \ref{lemma;commute} this proves that $\ca{P}^\sharp$ is also an
isomorphism.  Upon replacing $\tau$ with $\omega_M-\tau$ we see that
the map
$$R(H,\omega_M-\tau)\longto R(H,\tau)\spcheck$$
induced by $\ca{P}$ is likewise an isomorphism.  Therefore $\ca{P}$ is
nonsingular.  The Pittie-Steinberg theorem
\cite{pittie;homogeneous-vector}, \cite{steinberg;pittie} says that
$R(H)$ is a free $R(G)$-module of rank $\abs{W^H}$.  By Lemma
\ref{lemma;commute}, $R(H,\tau)$ is isomorphic to a direct summand of
$R(T)$, so applying Pittie-Steinberg to $H=T$ we find that $R(H,\tau)$
is a projective $R(G)$-module.  Moreover, since $R(T)$ is finitely
generated and $R(G)$ is a Noetherian ring, $R(H,\tau)$ is finitely
generated.  Steinberg proved also that $R(G)$ is the tensor product of
a polynomial algebra and a Laurent polynomial algebra over $\Z$.  The
assertion that $R(H,\tau)$ is free now follows from the theorem of
Quillen \cite{quillen;projective-polynomial} and Suslin
\cite{suslin;projective-polynomial-free}, which states that finitely
generated projective modules over such rings are free.  (Quillen and
Suslin proved their result for polynomial algebras over a principal
ideal domain and Suslin remarked that his proof works just as well for
mixed polynomial and Laurent polynomial algebras.  See also
\cite[Chapter~V]{lam;serre-problem}.)  Lemma \ref{lemma;rank} (see
Appendix \ref{section;central}) states that
$\rank_{R(H)}(R(H,\tau))=1$.  By the Pittie-Steinberg theorem we
conclude that
$$
\rank_{R(G)}(R(H,\tau))=\rank_{R(G)}(R(H))\cdot\rank_{R(H)}(R(H,\tau))
$$
is equal to $\abs{W^H}$.
\end{proof}

In general it is not true that the twisted module $R(H,\tau)$ is free
over $R(H)$.  (See Example \ref{example;not-free}.)  Steinberg
\cite{steinberg;pittie} has constructed an explicit basis of the
$R(G)$-module $R(H)$.  We do not know if there is a similar
construction for a basis of $R(H,\tau)$.

The next result, which is in essence
\cite[Theorem~III]{bott;homogeneous-differential} , highlights the
contrast between maximal rank subgroups and other subgroups of $G$
(cf.\ Theorem \ref{theorem;linear}\eqref{item;zero}): in the maximal
rank case \emph{every} twisted induction map (at least for $\sigma=0$)
arises from twisted equivariant elliptic operators.

\begin{theorem}\label{theorem;dirac-induction}
Assume that $\pi_1(G)$ is torsion-free.  Then the group of twisted
induction maps $R(H,\tau)\spcheck$ is a free $R(G)$-module of rank
$\abs{W^H}$.  Every twisted induction map $f\in R(H,\tau)\spcheck$ is
of the form $f=\ca{P}^\sharp(a)$ for a unique $a\in
R(H,\omega_M-\tau)$.  Choose $H^{(\omega_M-\tau)}$-modules $V_0$ and
$V_1$ of level $1$ such that $a=[V_0]-[V_1]$.  Let $D_0$ and $D_1$ be
the twisted Dirac operators defined by the equivariant Clifford
modules $S\otimes V_0$, resp.\ $S\otimes V_1$.  Then
$f=i_{D_0}-i_{D_1}$.
\end{theorem}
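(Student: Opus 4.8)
The plan is to obtain parts (1) and (2) as immediate corollaries of the nonsingularity statement, Theorem~\ref{theorem;nonsingular}, and to get part~(3) by unwinding Example~\ref{example;twisted-dirac}. For part~(1): since $\pi_1(G)$ is torsion-free, Theorem~\ref{theorem;nonsingular} says $R(H,\tau)$ is a free $R(G)$-module of rank $\abs{W^H}$, and the $R(G)$-dual of a finitely generated free module is again free of the same rank, so $R(H,\tau)\spcheck=\Hom_{R(G)}(R(H,\tau),R(G))$ is free of rank $\abs{W^H}$; by Definition~\ref{definition;induction} with $\sigma=0$ this is exactly the group of twisted induction maps $R(H,\tau)\to R(G)$. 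For part~(2): Theorem~\ref{theorem;nonsingular} also asserts that $\ca{P}$ is nonsingular, so the $R(G)$-linear map $R(H,\omega_M-\tau)\to R(H,\tau)\spcheck$ sending $a$ to the functional $b\mapsto\ca{P}(b,a)=i_*(ab)$ is an isomorphism; writing $\ca{P}^\sharp(a)$ for this functional (the same abuse of notation as in the statement), every $f\in R(H,\tau)\spcheck$ equals $\ca{P}^\sharp(a)$ for a unique $a\in R(H,\omega_M-\tau)$.

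For part~(3) I would first record that $R(H,\omega_M-\tau)$ is the Grothendieck group of the semisimple category $\Rep(H,\omega_M-\tau)$ of finite-dimensional level-$1$ $H^{(\omega_M-\tau)}$-modules, so the class $a$ can indeed be written $a=[V_0]-[V_1]$ with $V_0,V_1$ honest modules of this type, as in the statement. Now let $D_j$ be the twisted Dirac operator on $M$ determined by the equivariant Clifford module $S\otimes V_j$ and the canonical connection of \S\ref{subsection;dirac}, and compare with Example~\ref{example;twisted-dirac}: there, starting from an $H^{(\tau_0-\omega_M)}$-module $V$, the Dirac operator built from $S\otimes V$ is a $(\sigma,\tau_0)$-twisted operator with symbol class $[\Dirac]\,\pi^*([V])$, hence $a_D=\pi_*([D])=[V]$, and by Theorem~\ref{theorem;induction} $i_D(b)=i_*([V]b)$ for all $b\in R(H,\sigma-\tau_0)$. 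To match our situation I would set $\sigma=0$ and use the canonical identification $R(H,\omega_M)=R(H,-\omega_M)$ fixed in \S\ref{subsection;thom}, under which an $H^{(\omega_M-\tau)}$-module becomes an $H^{(\tau_0-\omega_M)}$-module with $\tau_0=-\tau$; then the index range becomes $b\in R(H,0-(-\tau))=R(H,\tau)$. Thus each $D_j$ is a $(0,-\tau)$-twisted operator with $i_{D_j}(b)=i_*([V_j]b)$ for all $b\in R(H,\tau)$.

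The theorem then follows by a one-line computation: for all $b\in R(H,\tau)$,
\[
(i_{D_0}-i_{D_1})(b)=i_*\bigl(([V_0]-[V_1])b\bigr)=i_*(ab)=\ca{P}^\sharp(a)(b)=f(b),
\]
so $f=i_{D_0}-i_{D_1}$. I expect the only real obstacle to be the twist bookkeeping in the second paragraph: one must verify that the equivariant structure on $S\otimes V_j$ is a level-$1$ structure for the extension $H^{(-\tau)}$ rather than for $H^{(\tau)}$, which is precisely the point where the identification $R(H,\omega_M)\cong R(H,-\omega_M)$ of \S\ref{subsection;thom} is used; once the extensions are lined up, the remaining steps are formal consequences of Theorems~\ref{theorem;nonsingular} and~\ref{theorem;induction} and Example~\ref{example;twisted-dirac}.
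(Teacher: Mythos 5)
Your proposal is correct and follows essentially the same route as the paper: parts (1) and (2) are read off from Theorem~\ref{theorem;nonsingular}, and part (3) is the computation $i_{D_j}(b)=i_*([V_j]b)$ from Example~\ref{example;twisted-dirac} (the paper leaves the $\omega_M\sim-\omega_M$ twist bookkeeping implicit, which you have usefully made explicit).
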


\begin{proof}
The first two assertions follow immediately from Theorem
\ref{theorem;nonsingular}.  We have $i_{D_0}(b)=i_*([V_0]b)$ and
$i_{D_1}(b)=i_*([V_1]b)$ for all $b\in R(H,\tau)$ by Example
\ref{example;twisted-dirac}.  Hence
$f(b)=i_*(ab)=i_{D_0}(b)-i_{D_1}(b)$ for all $b$.
\end{proof}

\section{Applications to K-theory}\label{section;k}

The results of the previous sections lead to some direct consequences
in equivariant K-theory.  We use the same notation as in
\S\S\,\ref{section;induction}--\ref{section;properties}.  Recall that
$G$ denotes a compact connected Lie group, $H$ a closed connected
subgroup of maximal rank, $T$ a common maximal torus of $H$ and $G$,
and $\tau$ a central extension of $H$ by $\U(1)$.  In addition we
denote by $X$ a compact topological $G$-space,%
\glossary{X@$X$, topological $G$-space}
by $K_G^*(X)$ its $G$-equivariant K-ring, and by $K_H^*(X,\tau)$ its
$\tau$-twisted $H$-equivariant K-group.

\subsection{The K\"unneth theorem and duality}
\label{subsection;poincare}

We establish an equivariant K\"unneth formula, Proposition
\ref{proposition;kunneth}, which is a minor variation on results of
Hodgkin \cite{hodgkin;equivariant-kunneth}, Snaith
\cite{snaith;kunneth}, McLeod \cite{mcleod;kunneth-equivariant}, and
Rosenberg and Schochet \cite{rosenberg-schochet;equivariant-k-theory},
and which allows us to extend the results of \S\,\ref{section;dirac}
to equivariant K-theory.  This formula implies that the
$G$-equivariant K-group $K_G^*(X)$ is a direct summand of the
(untwisted) $H$-equivariant K-group $K_H^*(X)$, and in Theorem
\ref{theorem;invariant} we identify this direct summand in terms of
linear equations.  Theorem \ref{theorem;duality} is a duality theorem,
which generalizes the nonsingular pairing $\ca{P}$ from representation
rings to equivariant K-theory.

First we extend the definition of twisted induction maps to K-theory.
The action map
$$A\colon G\times X\to X$$
defined by $A(g,x)=gx$ is $G$-equivariant with respect to the left
multiplication action on $G$ and $H$-invariant with respect to the
action $h\cdot(g,x)=(gh^{-1},hx)$.  In the sense of
\cite[\S\,1]{atiyah-singer;index-elliptic-IV;ann} the triple
$\group{X}=(G\times X,X,A)$ is a $G\times H$-equivariant family of
smooth manifolds over $X$ with fibre $G$ and projection $A$.  The
$H$-action on $\group{X}$ is free and the quotient family
$\tilde{\group{X}}=(G\times^HX,X,\tilde{A})$ is a $G$-equivariant
family of smooth manifolds over $X$ with fibre $M=G/H$ and projection
map $\tilde{A}\colon\tilde{\group{X}}\to X$ given by
$\tilde{A}([g,x])=gx$.

Now let $U$ be a $\Z/2\Z$-graded $H^{(\tau)}$-module of level $1$ and
let
$$
D\colon\Gamma\bigl(G,G\times U^0\bigr)\longto\Gamma\bigl(G,G\times
U^1\bigr)
$$
be a twisted equivariant elliptic differential operator on $M$.  Let
$V$ be an $H^{\tau}$-equivariant vector bundle of level $-1$ over $X$.
Then $E=U\otimes V$ is a $\Z/2\Z$-graded $H$-equivariant vector bundle
over $X$.  The pullback $\group{E}=A^*E$ is a $\Z/2\Z$-graded $G\times
H$-equivariant vector bundle over the family $\group{X}$.  It is shown
in \cite[\S\,2]{atiyah;bott-periodicity-index} that there exists a
$G\times H$-equivariant family of differential operators
$$
\group{D}_{V,x}\colon\Gamma\bigl(A^{-1}(x),\group{E}^0|A^{-1}(x)\bigr)
\longto\Gamma\bigl(A^{-1}(x),\group{E}^1|A^{-1}(x)\bigr)
$$
whose symbol is equal to
$$\sym(\group{D}_V)=\sym(D)\otimes\id_V.$$
It follows that the family $\group{D}_V$ is $H$-transversely elliptic.
The $H$-action being free, $\group{D}_V$ descends to a $G$-equivariant
elliptic family
$$
\tilde{\group{D}}_{V,x}\colon
\Gamma\bigl(\tilde{A}^{-1}(x),\tilde{\group{E}}^0|\tilde{A}^{-1}(x)\bigr)
\longto
\Gamma\bigl(\tilde{A}^{-1}(x),\tilde{\group{E}}^1|\tilde{A}^{-1}(x)\bigr)
$$
over $\tilde{\group{X}}$ with coefficients in the quotient bundle
$\tilde{\group{E}}=G\times^HE$.  We define the induction map
$$i_D\colon K_H^*(X,-\tau)\longto K_G^*(X)$$
by $i_D([V])=\index(\tilde{\group{D}}_V)$.  

\begin{remark}
The family of manifolds $\tilde{\group{X}}$ is trivial (a
trivialization is given by mapping a class $[g,x]\in\tilde{\group{X}}$
to the pair $(gH,gx)\in G/H\times X$), but the family of vector
bundles $\tilde{\group{E}}$ is not trivial unless the $H$-equivariant
vector bundle $E$ over $X$ is $G$-equivariant.  Thus
$\tilde{\group{D}}_V$ is a nontrivial family of operators.
\end{remark}

More generally, if $\sigma$ is a central extension of $G$ by $\U(1)$
and $D$ is a $(\sigma,\tau)$-twisted operator, this construction gives
rise to an induction map
$$i_D\colon K_H^*(X,\sigma-\tau)\longto K_G^*(X,\sigma),$$
which is linear over the ring $K_G^*(X)$.  Taking $D$ to be the
twisted $\Spin^c$ Dirac operator $\Dirac$ we get the looked-for map
\begin{equation}\label{equation;induction-k}
i_*=i_\Dirac\colon K_H^*(X,\sigma+\omega_M)\longto K_G^*(X,\sigma).
\end{equation}
Replacing $G$ with $H$ and $H$ with $T$ gives the map $j_{H,*}\colon
K_T^*(X,\tau+\omega_{H/T})\to K_H^*(X,\tau)$, which occurs in the
proof of the following K\"unneth formula.

\begin{proposition}\label{proposition;kunneth}
Assume that $\pi_1(G)$ is torsion-free.  Then the map
$$\phi\colon R(H,\tau)\otimes_{R(G)}K_G^*(X)\longto K_H^*(X,\tau)$$
defined by $\phi(u\otimes b)=u\cdot i^*(b)$ is an isomorphism of
$\Z/2\Z$-graded $R(H)$-modules.  Hence $K_H^*(X,\tau)$ is a free
module over $K_G^*(X)$ of rank $\abs{W^H}$.
\end{proposition}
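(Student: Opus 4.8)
The ``Hence'' is a formality: once $\phi$ is known to be an isomorphism, Theorem~\ref{theorem;nonsingular} (whose hypothesis is precisely that $\pi_1(G)$ is torsion-free) gives that $R(H,\tau)$ is a free $R(G)$-module of rank $\abs{W^H}$, so $R(H,\tau)\otimes_{R(G)}K_G^*(X)$, and therefore $K_H^*(X,\tau)$, is a free $K_G^*(X)$-module of rank $\abs{W^H}$. The real task is to show $\phi$ is an isomorphism, and the plan is to reduce this to the classical \emph{untwisted} equivariant K\"unneth theorem for the pair $(G,T)$ by repeating, with $X$ present as a parameter, the ``direct summand'' argument of Lemma~\ref{lemma;commute}.

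First I would invoke the classical equivariant K\"unneth theorem \cite{hodgkin;equivariant-kunneth,snaith;kunneth,mcleod;kunneth-equivariant,rosenberg-schochet;equivariant-k-theory}: since $\pi_1(G)$ is torsion-free, the natural map
$$\phi_T\colon R(T)\otimes_{R(G)}K_G^*(X)\longto K_T^*(X),\qquad u\otimes b\longmapsto u\cdot j_G^*(b),$$
is an isomorphism of $\Z/2\Z$-graded modules. Next, mimicking \eqref{equation;iota} and \eqref{equation;pi} I would fix a level-$1$ character $\mu$ of $T^{(\tau)}$ (so that $e^\mu$ freely generates $R(T,\tau)$ over $R(T)$) and define $K_G^*(X)$-linear maps
$$\iota_X\colon K_H^*(X,\tau)\xrightarrow{\,j_H^*\,}K_T^*(X,\tau)\xrightarrow{\,e^{-\mu}\,}K_T^*(X),$$
$$\pi_X\colon K_T^*(X)\xrightarrow{\,e^{\rho_H+\mu}\,}K_T^*(X,\tau+\omega_{H/T})\xrightarrow{\,j_{H,*}\,}K_H^*(X,\tau),$$
where $j_{H,*}$ is the K-theoretic twisted $\Spin^c$-induction for $H/T$ recalled just before the statement. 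The relations $\pi_X\circ\iota_X=\id$, $\iota_X\circ\phi=\phi_T\circ(\iota\otimes\id)$ and $\phi\circ(\pi\otimes\id)=\pi_X\circ\phi_T$ --- where $\iota,\pi$ are the representation-ring maps \eqref{equation;iota}, \eqref{equation;pi} for $T\subset H$, which satisfy $\pi\circ\iota=\id$ --- would be verified by the same computations as in the proof of Lemma~\ref{lemma;commute}, using the projection formula for $j_{H,*}$, the equality $j_H^*\circ i^*=j_G^*$, and the fact that $j_{H,*}(e^{\rho_H})=1$ (this last identity descending to K-theory from $R(H)$ by naturality of induction along $X\to\pt$).

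Granting these, $\phi$ would be an isomorphism by a diagram chase identical in form to the one in Lemma~\ref{lemma;commute}, with $\phi_T$ now playing the role of the identity: for surjectivity, given $y\in K_H^*(X,\tau)$ write $\iota_X(y)=\phi_T(z)$ and note $\phi\bigl((\pi\otimes\id)(z)\bigr)=\pi_X\phi_T(z)=\pi_X\iota_X(y)=y$; for injectivity, $\phi(w)=0$ forces $\phi_T\bigl((\iota\otimes\id)(w)\bigr)=\iota_X\phi(w)=0$, hence $(\iota\otimes\id)(w)=0$, hence $w=(\pi\otimes\id)(\iota\otimes\id)(w)=0$. The rank statement then follows as in the first paragraph.

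I expect the main obstacle to be the K-theoretic upgrade of Lemma~\ref{lemma;commute}: one needs that K-theoretic twisted induction obeys the same formal rules (linearity over the relevant equivariant K-rings, the projection formula, compatibility with restriction and, via $X\to\pt$, with the corresponding representation-ring statements) that were used there. All of this follows from the construction of the induction maps $i_D$ in K-theory given just above the proposition, but it must be assembled with care. A more ``topological'' alternative would be to quote a twisted form of Hodgkin's K\"unneth spectral sequence, $\Tor^{R(G)}_p\bigl(K_G^*(M,\tau),K_G^*(X)\bigr)\Rightarrow K_G^{*+p}(M\times X,p^*\tau)$ --- where $M\times X$ carries the diagonal $G$-action, $K_G^*(M\times X,p^*\tau)\cong K_H^*(X,\tau)$ and $K_G^*(M,\tau)\cong R(H,\tau)$ --- which would collapse because $R(H,\tau)$ is $R(G)$-flat by Theorem~\ref{theorem;nonsingular}; but establishing the twisted spectral sequence merely relocates the difficulty.
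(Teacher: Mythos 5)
Your proposal is correct and follows essentially the same route as the paper's proof: the paper also reduces to the untwisted equivariant K\"unneth theorem for $(G,T)$ (your $\phi_T$ is its $\psi$), defines the same maps $\iota$ and $\pi$ on K-theory by analogy with \eqref{equation;iota} and \eqref{equation;pi}, verifies the same intertwining relations and $\pi\circ\iota=\id$ via $K_H^*(X)$-linearity of $j_H^*$ and $j_{H,*}$ and the identity $j_{H,*}(e^{\rho_H})=1$, and concludes by the same diagram chase, with the rank statement coming from Theorem~\ref{theorem;nonsingular}.
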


\begin{proof}
Evidently $\phi$ is an $R(H)$-linear morphism of degree $0$.  To prove
that it is bijective we consider a diagram similar to
\eqref{equation;summand},
$$
\xymatrix{
R(T)\otimes_{R(G)}K_G^*(X)\ar[r]^-\psi\ar@<0.5ex>[d]^{\pi\otimes\id}&
K_T^*(X)\ar@<0.5ex>[d]^\pi\\
R(H,\tau)\otimes_{R(G)}K_G^*(X)\ar[r]^-\phi\ar@<0.5ex>[u]^{\iota\otimes\id}&
K_H^*(X,\tau)\ar@<0.5ex>[u]^\iota.
}
$$
Here $\psi$ is defined by $\psi(v\otimes b)=v\cdot j_G^*(b)$ and the
maps $\iota$ and $\pi$ on the left are defined in
\eqref{equation;iota} and \eqref{equation;pi}.  Analogously the maps
$\iota$ and $\pi$ on the right are defined to be the compositions
\begin{gather*}
\iota\colon\xymatrix{K_H^*(X,\tau)\ar[r]^-{j_H^*}&
K_T^*(X,\tau)\ar[r]^-{e^{-\mu}}&K_T^*(X),}
\\
\pi\colon\xymatrix{K_T(X)\ar[r]^-{e^{\rho_H+\mu}}&
K_T^*(X,\omega_{H/T}+\tau)\ar[r]^-{j_{H,*}}&K_H^*(X,\tau).}
\end{gather*}
As in Lemma \ref{lemma;commute} we have the properties
$$
\iota\circ \phi=\psi\circ(\iota\otimes\id),\qquad\pi\circ
\psi=\phi\circ(\pi\otimes\id),
$$
which follow from the $K_H^*(X)$-linearity of the restriction map
$j_H^*$ and the induction map $j_{H,*}$; and the property
$\pi\circ\iota=\id$, which follows from $j_{H,*}(e^{\rho_H})=1$.  The
map $\psi$ is an isomorphism by the usual equivariant K\"unneth
theorem of \cite{rosenberg-schochet;equivariant-k-theory}.  Hence
$\phi$ is also an isomorphism.  The second statement of the
proposition now follows from Theorem \ref{theorem;nonsingular}.
\end{proof}

\begin{example}\label{example;kunneth}
Assume that $\pi_1(G)$ is torsion-free.  Take $X=G$ and let $G$ act by
right multiplication.  Then $K_H^*(X,\tau)\cong K^*(X/H,\tau)$ because
the action is free, so Proposition \ref{proposition;kunneth} gives an
isomorphism
$$K^*(G/H,\tau)\cong R(H,\tau)\otimes_{R(G)}\Z.$$
Taking $X=G/H$ and letting $H$ act by right multiplication gives an
isomorphism
$$K_H^*(G/H,\tau)\cong R(H,\tau)\otimes_{R(G)}R(H).$$
\end{example}

The equivariant K\"unneth formula expresses $H$-equivariant K-the\-ory
in terms of $G$-equivariant K-theory, but it also enables us to do the
reverse.  Let $\ca{E}=\End_{R(G)}(R(H))$ be the ring of $R(G)$-linear
endomorphisms of $R(H)$ and let $I(\ca{E})$ be the left ideal
$$I(\ca{E})=\{\,\Delta\in\ca{E}\mid\Delta(1)=0\,\}.$$
For any left $\ca{E}$-module $A$ let $A^{I(\ca{E})}$ denote the
subgroup of elements annihilated by $I(\ca{E})$.  The K\"unneth
isomorphism, Proposition \ref{proposition;kunneth}, shows that
$K_H^*(X)$ is in a natural way a left $\ca{E}$-module if $\pi_1(G)$ is
torsion-free.  A version of the next theorem for the maximal torus
$H=T$ was proved in \cite[Theorem~4.6]%
{harada-landweber-sjamaar;divided-differences-character}.  (However,
the version for the maximal torus is stronger: it is less restrictive
in that it does not require the assumption on $\pi_1(G)$, and it is
more explicit in that there is a description of the left ideal
$I(\ca{E})$ in terms of divided difference operators.)

\begin{theorem}\label{theorem;invariant}
Assume that $\pi_1(G)$ is torsion-free.  Then the map $i^*\colon
K_G^*(X)\to K_H^*(X)$ is an isomorphism onto $K_H^*(X)^{I(\ca{E})}$.
\end{theorem}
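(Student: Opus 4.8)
The plan is to transport everything through the K\"unneth isomorphism $\phi\colon R(H)\otimes_{R(G)}K_G^*(X)\to K_H^*(X)$, $\phi(u\otimes b)=u\,i^*(b)$, of Proposition~\ref{proposition;kunneth}. By construction the $\ca{E}$-module structure on $K_H^*(X)$ is the one obtained, by transport along $\phi$, from the tautological action of $\ca{E}=\End_{R(G)}(R(H))$ on the first tensor factor of $R(H)\otimes_{R(G)}K_G^*(X)$; thus $\phi$ is $\ca{E}$-linear and carries $\bigl(R(H)\otimes_{R(G)}K_G^*(X)\bigr)^{I(\ca{E})}$ isomorphically onto $K_H^*(X)^{I(\ca{E})}$. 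Moreover $i^*=\phi\circ(b\mapsto 1\otimes b)$, since $\phi(1\otimes b)=i^*(b)$. Writing $L=K_G^*(X)$, the theorem therefore reduces to two purely algebraic statements about the $R(G)$-modules $R(H)$ and $L$: (a)~$\bigl(R(H)\otimes_{R(G)}L\bigr)^{I(\ca{E})}=1\otimes L$; and (b)~the map $b\mapsto 1\otimes b$ from $L$ to $R(H)\otimes_{R(G)}L$ is injective.

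The one nonformal ingredient for (a) and (b) is that $R(G)\cdot 1$ is a direct summand of the $R(G)$-module $R(H)$, equivalently that $1$ is a unimodular vector of $R(H)$. To obtain the required $R(G)$-linear retraction $\lambda\colon R(H)\to R(G)$ with $\lambda(1)=1$, I would put $\lambda(a)=i_*\bigl(a\cdot[V_H(\rho_M,\omega_M)]\bigr)$, where the product is taken inside $R(H,\omega_M)$ and $i_*\colon R(H,\omega_M)\to R(G)$ is twisted $\Spin^c$-induction. This $\lambda$ is $R(G)$-linear because multiplication by the fixed class $[V_H(\rho_M,\omega_M)]$ is an $R(H)$-linear map $R(H)\to R(H,\omega_M)$ and $i_*$ is $R(G)$-linear, and $\lambda(1)=i_*\bigl([V_H(\rho_M,\omega_M)]\bigr)=1$ by Example~\ref{example;irreducible}. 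Writing $R(H)=R(G)\cdot 1\oplus C$ with $C=\ker\lambda$ then gives the splitting. Statement~(b) is then immediate: applying $(-)\otimes_{R(G)}L$ to the split monomorphism $R(G)\cong R(G)\cdot 1\hookrightarrow R(H)$ again yields a (split) monomorphism, namely $b\mapsto 1\otimes b$.

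For (a), decompose $A=R(H)\otimes_{R(G)}L$ as the direct sum of $1\otimes L$ and $C\otimes_{R(G)}L$. The inclusion $1\otimes L\subseteq A^{I(\ca{E})}$ is clear, since $\Delta(1\otimes b)=\Delta(1)\otimes b=0$ for every $\Delta\in I(\ca{E})$. For the reverse inclusion, let $\pi_C\colon R(H)\to C$ be the projection along $R(G)\cdot 1$, let $\iota_C\colon C\hookrightarrow R(H)$ be the inclusion, and set $\Delta_0=\iota_C\circ\pi_C$. This $\Delta_0$ is an idempotent in $\ca{E}$ with $\Delta_0(1)=0$, so $\Delta_0\in I(\ca{E})$, and $\Delta_0\otimes\id_L$ is the projection of $A$ onto $C\otimes_{R(G)}L$ along $1\otimes L$. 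Hence every $a\in A^{I(\ca{E})}$ satisfies $(\Delta_0\otimes\id_L)(a)=0$, so its $C\otimes_{R(G)}L$-component vanishes and $a\in 1\otimes L$. This proves (a); combining (a) and (b) with the reductions of the first paragraph shows that $i^*$ is injective with image $K_H^*(X)^{I(\ca{E})}$, as asserted.

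The only substantial input used is Proposition~\ref{proposition;kunneth}, which explains why the hypothesis on $\pi_1(G)$ reappears; everything else is elementary module theory once the retraction $\lambda$ is available. The point I would be most careful to pin down is that the $\ca{E}$-module structure in the statement is exactly the one induced through $\phi$ from the first tensor factor (so that $\phi$ is $\ca{E}$-linear and the computation of $A^{I(\ca{E})}$ is the relevant one), and that $I(\ca{E})$ is genuinely a \emph{left} ideal, so that $A^{I(\ca{E})}$ is an $R(G)$-submodule of $A$; both points are built into the definitions made just before the theorem but deserve an explicit sentence.
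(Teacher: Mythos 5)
Your proof is correct, and it takes a genuinely different route from the paper's. The paper treats $R(H)$ as an $\ca{E}$-$R(G)$-bimodule, uses the Pittie--Steinberg theorem to see that $R(H)$ is a progenerator of the category of $R(G)$-modules, and then applies the first Morita equivalence theorem: $B\mapsto R(H)\otimes_{R(G)}B$ is an equivalence of categories with inverse $A\mapsto\Hom_{\ca{E}}(R(H),A)$, and the latter functor is identified (by reference to the earlier paper on divided differences) with $A\mapsto A^{I(\ca{E})}$. You replace all of this by the single observation that $1$ is a unimodular element of the $R(G)$-module $R(H)$, witnessed by the explicit retraction $\lambda(a)=i_*\bigl(a\cdot[V_H(\rho_M,\omega_M)]\bigr)$, which is $R(G)$-linear and satisfies $\lambda(1)=1$ by Example~\ref{example;irreducible}; the resulting idempotent $\Delta_0=\iota_C\circ\pi_C\in I(\ca{E})$ then computes $\bigl(R(H)\otimes_{R(G)}L\bigr)^{I(\ca{E})}$ directly. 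What your approach buys is economy and self-containedness: beyond the K\"unneth isomorphism (which is where the hypothesis on $\pi_1(G)$ and the Pittie--Steinberg theorem actually enter), you need only the splitting $R(H)=R(G)\cdot 1\oplus C$, not the finite generation, projectivity, or generator property of $R(H)$ over $R(G)$, and no Morita theory or external reference. What the paper's approach buys is the stronger conclusion that $B\mapsto R(H)\otimes_{R(G)}B$ is an equivalence of categories, packaged uniformly with the case $H=T$ treated in the earlier paper. Your two closing caveats --- that the $\ca{E}$-action on $K_H^*(X)$ is by definition the one transported along $\phi$, so that $\phi$ is $\ca{E}$-linear, and that $I(\ca{E})$ is a left ideal --- are exactly the right points to make explicit, and both are indeed built into the paper's definitions preceding the theorem.
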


\begin{proof}
The group $R(H)$ is an $\ca{E}$-$R(G)$-bimodule.  By the
Pittie-Steinberg theorem \cite{pittie;homogeneous-vector},
\cite{steinberg;pittie}, $R(H)$ is free of finite rank $\abs{W^H}$ as
a $R(G)$-module and therefore it is a progenerator of the category of
$R(G)$-modules.  Hence, by the first Morita equivalence theorem (see
e.g.\ \cite[\S\,18]{lam;modules-rings}), the functor 
$$\lie{G}\colon B\mapsto R(H)\otimes_{R(G)}B$$
is an equivalence from the category of left $R(G)$-modules to the
category of left $\ca{E}$-modules, whose inverse is the functor
$$\lie{F}\colon A\mapsto\Hom_{\ca{E}}(R(H),A).$$
As in the proof of \cite[Theorem
4.6]{harada-landweber-sjamaar;divided-differences-character} (which
deals with the case $H=T$) one shows that the functor $\lie{F}$ is
isomorphic to the functor $\lie{I}\colon A\mapsto A^{I(\ca{E})}$.  We
conclude that $B\cong\lie{I}(\lie{G}(B))$ for all $R(G)$-modules $B$.
In particular we can take $B=K_G^*(X)$.  Then $\lie{G}(B)\cong
K_H^*(X)$ by Proposition \ref{proposition;kunneth}, so
$B\cong\lie{I}(K_H^*(X))=K_H^*(X)^{I(\ca{E})}$.
\end{proof}

The pairing $\ca{P}$ defined in \S\,\ref{subsection;duality}
generalizes to a bi-additive pairing 
$$
\ca{P}_X\colon K_H^*(X,\tau)\times K_H^*(X,\omega_M-\tau)\longto
K_G^*(X)
$$
defined by $\ca{P}_X(a_1,a_2)=i_*(a_1a_2)$ for $a_1\in K_H^*(X,\tau)$
and $a_2\in K_H^*(X,\omega_M-\tau)$.  It follows from the naturality
of $i_*$ that
\begin{equation}\label{equation;natural-pairing}
\ca{P}_X(f^*(a_1),f^*(a_2))=f^*\ca{P}_Y(a_1,a_2)
\end{equation} 
for $a_1\in K_H^*(Y,\tau)$ and $a_2\in K_H^*(Y,\omega_M-\tau)$, where
$f\colon X\to Y$ is any $G$-equivariant continuous map.

\begin{theorem}\label{theorem;duality}
Assume that $\pi_1(G)$ is torsion-free.  Then the pairing $\ca{P}_X$
is nonsingular.  Hence
$$
K_H^*(X,\tau)\cong
\Hom_{K_G^*(X)}\bigl(K_H^*(X,\omega_M-\tau),K_G^*(X)\bigr)
$$
as $\Z/2\Z$-graded left $K_G^*(X)$-modules.
\end{theorem}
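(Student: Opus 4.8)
The plan is to reduce to the representation-ring case, Theorem \ref{theorem;nonsingular}, by base change along the structure map $R(G)=K_G^*(\pt)\to K_G^*(X)$ together with the K\"unneth isomorphism of Proposition \ref{proposition;kunneth}. Write $S=K_G^*(X)$, and let $\phi\colon R(H,\tau)\otimes_{R(G)}S\longiso K_H^*(X,\tau)$ and $\phi'\colon R(H,\omega_M-\tau)\otimes_{R(G)}S\longiso K_H^*(X,\omega_M-\tau)$ be the K\"unneth isomorphisms (Proposition \ref{proposition;kunneth} applies to both twists since $\pi_1(G)$ is torsion-free). The first step is to observe that in these coordinates $\ca{P}_X$ is just the $S$-bilinear extension of the pairing $\ca{P}$ of \S\,\ref{subsection;duality}: using $\phi(u\otimes b)=u\,i^*(b)$, the multiplicativity of the restriction $i^*\colon S\to K_H^*(X)$, and the $K_G^*(X)$-linearity of $i_*$, one computes
$$
\ca{P}_X\bigl(\phi(u_1\otimes b_1),\phi'(u_2\otimes b_2)\bigr)
=i_*\bigl(u_1u_2\,i^*(b_1b_2)\bigr)=i_*(u_1u_2)\,b_1b_2=\ca{P}(u_1,u_2)\,b_1b_2
$$
for $u_1\in R(H,\tau)$, $u_2\in R(H,\omega_M-\tau)$ and $b_1,b_2\in S$ (up to the standard Koszul signs of $\Z/2\Z$-graded K-theory, which play no role below).

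Next I would identify the adjoint map. Let
$$
\nu\colon R(H,\omega_M-\tau)\spcheck\otimes_{R(G)}S\longto
\Hom_S\bigl(R(H,\omega_M-\tau)\otimes_{R(G)}S,\,S\bigr)
$$
be the canonical base-change morphism, $\psi\otimes b\mapsto(n\otimes c\mapsto\psi(n)bc)$. A direct check on pure tensors, using the formula for $\ca{P}_X$ above, shows that
$$
\ca{P}_X^\sharp=\Hom_S\bigl((\phi')^{-1},S\bigr)\circ\nu\circ
\bigl(\ca{P}^\sharp\otimes_{R(G)}\id_S\bigr)\circ\phi^{-1}.
$$
In this composite $\phi$ is an isomorphism by Proposition \ref{proposition;kunneth}; $\ca{P}^\sharp$ is an isomorphism of $R(G)$-modules by Theorem \ref{theorem;nonsingular}, hence so is $\ca{P}^\sharp\otimes\id_S$; and $\nu$ is an isomorphism because $R(H,\omega_M-\tau)$ is a finitely generated \emph{free} $R(G)$-module by Theorem \ref{theorem;nonsingular}, for which $\nu$ is manifestly bijective (both sides being additive in the module argument). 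Finally $\Hom_S\bigl((\phi')^{-1},S\bigr)$ is an isomorphism since $\phi'$ is. Therefore $\ca{P}_X^\sharp$ is an isomorphism. Running the same argument with $\tau$ replaced by $\omega_M-\tau$ (and using $\omega_M-(\omega_M-\tau)=\tau$ and graded-commutativity of the K-theory product) shows that the second adjoint map $K_H^*(X,\omega_M-\tau)\to\Hom_{K_G^*(X)}(K_H^*(X,\tau),K_G^*(X))$ is an isomorphism as well, so $\ca{P}_X$ is nonsingular; the displayed isomorphism of the theorem is then precisely $\ca{P}_X^\sharp$.

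It remains to record that this is a statement about $\Z/2\Z$-graded modules. Since $M=G/H$ has even dimension, the twisted $\Spin^c$ Dirac operator $\Dirac_M$ introduces no grading shift and $i_*$ preserves the $\Z/2\Z$-grading; hence $\ca{P}_X$ sends a homogeneous pair of degrees $p$ and $q$ to a class of degree $p+q$, and every map in the composite above is homogeneous of degree $0$. Thus $\ca{P}_X^\sharp$ is an isomorphism of $\Z/2\Z$-graded left $K_G^*(X)$-modules, with $\Hom$ understood in the graded sense. I do not expect a genuine obstacle: the substantive input is supplied by Proposition \ref{proposition;kunneth} and Theorem \ref{theorem;nonsingular}, and the only point needing care is that the formation of $\Hom$ commutes with the base change $R(G)\to K_G^*(X)$, which holds because the relevant $R(G)$-module is finitely generated projective --- exactly the finiteness the hypothesis on $\pi_1(G)$ secures through Theorem \ref{theorem;nonsingular}.
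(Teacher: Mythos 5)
Your argument is correct and is exactly the route the paper takes: its proof consists of the one-line statement that the result follows from Theorem \ref{theorem;nonsingular}, Proposition \ref{proposition;kunneth}, and the naturality property \eqref{equation;natural-pairing}, and your computation of $\ca{P}_X$ on pure K\"unneth tensors (which uses \eqref{equation;natural-pairing} implicitly, via $i_*(p^*(u_1u_2))=p^*\ca{P}(u_1,u_2)$ for the constant map $p\colon X\to\pt$) together with the base-change identification of $\ca{P}_X^\sharp$ with $\ca{P}^\sharp\otimes\id$ is precisely the detail the paper delegates to the $H=T$ case in the cited reference. No gaps.
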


\begin{proof}
This follows from the nonsingularity of the pairing $\ca{P}$ (Theorem
\ref{theorem;nonsingular}), the K\"unneth theorem (Proposition
\ref{proposition;kunneth}) and the naturality property
\eqref{equation;natural-pairing}.  (See \cite[Proposition
5.1]{harada-landweber-sjamaar;divided-differences-character} for the
case $H=T$.)
\end{proof}

\begin{example}\label{example;poincare}
Assume that $\pi_1(G)$ is torsion-free.  Let $\tau=0$.  Theorem
\ref{theorem;duality} contains as special cases various forms of
Poincar\'e duality in K-theory.  For instance, taking $X=G$ with $H$
acting by right multiplication gives a nonsingular pairing
$$K^*(G/H)\times K^*(G/H,\omega_{G/H})\longto\Z.$$
Taking $X=G/H$ with $H$ acting by left multiplication gives a
nonsingular pairing
$$K_H^*(G/H)\times K_H^*(G/H,\omega_{G/H})\longto R(H).$$
If $G/H$ has an invariant $\Spin^c$-structure, then $\omega_{G/H}=0$,
so we have a nonsingular $\Z$-valued pairing on $K^*(G/H)$ and a
nonsingular $R(H)$-valued pairing on $K_H^*(G/H)$.
\end{example}

\subsection{GKRS multiplets}\label{subsection;multiplet}
 
In this section we show how each $T$-equivari\-ant K-class on a
$G$-space $X$ gives birth to a litter of $T$-equivariant classes
parametrized by $W^H$, which we call a
\emph{Gross\--Kostant\--Ramond\--Sternberg multiplet}.  This is an
extension of the notion of a multiplet introduced in
\cite{gross-kostant-ramond-sternberg} from a one-point space to an
arbitrary $G$-space $X$.  Generalizing a result of
\cite{gross-kostant-ramond-sternberg}, we show that the alternating
sum of a multiplet, when mapped to ordinary K-theory, vanishes.

We start by rewriting \eqref{equation;anti-invariant} in the
``opposite'' way.  Define $J_M^\op\in\Z[W_G]$ by%
\glossary{Jmop@$J_M^\op$, ``opposite'' of $J_M$}
$$J_M^\op=\sum_{w\in W^H}\det(w)w^{-1}.$$
By decomposing $w\in W$ as $w=w''(w')^{-1}$ with $w'\in W^H$ and
$w''\in W_H$ we obtain
\begin{equation}\label{equation;opposite-antisymmetrizer}
J_G=J_HJ_M^\op.
\end{equation}
Let us write $\partial_G=j_G^*\circ j_{G,*}$.  This is an
$R(G)$-linear operator from $R(T,\omega_{G/T})$ to $R(T)$, and the
Weyl character formula for $G$ states that $\partial_G(a)=J_G(a)/\d_G$
for all $a\in R(T,\omega_{G/T})$.  Similarly, for the group $H$ we
have $\partial_H(b)=J_H(b)/\d_H$ for all $b\in R(T,\omega_{H/T})$.
Combining this with \eqref{equation;opposite-antisymmetrizer} and
substituting Lemma \ref{lemma;weight-euler}\eqref{item;euler-twist}
yields
$$
\partial_G(a)=\frac{J_G(a)}{\d_G}=\frac1{j_H^*(\e(\Dirac_M)^*)}
\frac{J_H\bigl(J_M^\op(a)\bigr)}{\d_H}
=\frac1{j_H^*(\e(\Dirac_M)^*)}\partial_H\bigl(J_M^\op(a)\bigr)
$$
for all $a\in R(T,\omega_{G/T})$.  This amounts to an identity of
$R(G)$-linear operators, namely
\begin{equation}\label{equation;gkrs}
j_H^*(\e(\Dirac_M)^*)\partial_G=\partial_H\circ J_M^\op,
\end{equation}
where the first operator on the left is multiplication by
$j_H^*(\e(\Dirac_M)^*)$.  Except for the twists by the various
orientation systems, which makes it work at the group level as opposed
to the Lie algebra level, this identity is formula (1) in
\cite{gross-kostant-ramond-sternberg}.

Because the induction maps $j_{G,*}$ and $j_{H,*}$ are defined in
K-theory (see \eqref{equation;induction-k}), the operators
$\partial_G$ and $\partial_H$ make sense in K-theory.  The operator
$\partial_G$ maps $K_T^*(X,\omega_{G/T})$ to $K_T^*(X)$ and
$\partial_H$ maps $K_T^*(X,\omega_{G/T})$ to
$$
K_T^*(X,\omega_{G/T}-\omega_{H/T})=K_T^*(X,\omega_M).
$$
The next lemma means that we can substitute in the identity
\eqref{equation;gkrs} any class $a\in K_T^*(X,\omega_{G/T})$.

\begin{lemma}\label{lemma;gkrs}
The following diagram commutes:
$$
\xymatrix{
K_T^*(X,\omega_{G/T})\ar[r]^-{\partial_G}\ar[d]_-{J_M^\op}&
K_T^*(X)\ar[d]^-{j_H^*(\e(\Dirac_M)^*)}
\\
K_T^*(X,\omega_{G/T})\ar[r]^-{\partial_H}&K_T^*(X,\omega_M).
}
$$
\end{lemma}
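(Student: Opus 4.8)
The plan is to deduce the commutativity of the square from the scalar identity \eqref{equation;gkrs}, which is precisely the case $X=\pt$ (under $R(T,\omega_{G/T})=K_T^*(\pt,\omega_{G/T})$ and $R(G)=K_G^*(\pt)$), by a module-theoretic reduction to that case. All four maps in the square are defined on the twisted $T$-equivariant K-theory of $X$: the induction maps $j_{G,*}$ and $j_{H,*}$ are those of \eqref{equation;induction-k}, so that $\partial_G=j_G^*\circ j_{G,*}$ and $\partial_H=j_H^*\circ j_{H,*}$ — for $\partial_H$ one reads $j_{H,*}$ as the map $K_T^*(X,\omega_M+\omega_{H/T})\to K_H^*(X,\omega_M)$ and uses the equivalence $\omega_{G/T}\sim\omega_M+\omega_{H/T}$ of Lemma \ref{lemma;functor}; multiplication by the fixed class $j_H^*(\e(\Dirac_M)^*)\in R(T,\omega_M)$ is the module action; and $J_M^\op=\sum_{w\in W^H}\det(w)w^{-1}$ acts through the $W_G$-action on $K_T^*(X,\omega_{G/T})$, which exists because $\omega_{G/T}$ is a $W_G$-invariant twist (the K-theoretic analogue of the first assertion of Proposition \ref{proposition;anti-invariant}). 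Each of these operations is natural with respect to $G$-equivariant maps of $X$; for the induction maps this is the naturality invoked for \eqref{equation;natural-pairing}.

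Second, I would note that all four operations are linear over $K_G^*(X)$, where $K_G^*(X)$ acts on any source or target $K_T^*(X,\nu)$ through the restriction $j_G^*=j_H^*\circ i^*$ (recall $i\circ j_H=j_G$). For $j_{G,*}$ and $j_{H,*}$ this is the projection formula built into the definition of the K-theoretic induction map (the excerpt records that $i_D$ is $K_G^*(X)$-linear), and it passes to $\partial_G$ and $\partial_H$ because $j_G^*$ and $j_H^*$ are ring homomorphisms; multiplication by $j_H^*(\e(\Dirac_M)^*)$ is $K_G^*(X)$-linear by associativity of the module structure; and $J_M^\op$ is $K_G^*(X)$-linear because the $W_G$-action on $K_T^*(X,\omega_{G/T})$ fixes the subring $j_G^*(K_G^*(X))$ pointwise, those classes being pulled back from $EG\times_G X$, on which $W_G$ acts trivially.

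Third, I would apply the equivariant K\"unneth theorem, Proposition \ref{proposition;kunneth}, with $H=T$ and $\tau=\omega_{G/T}$: the map $R(T,\omega_{G/T})\otimes_{R(G)}K_G^*(X)\to K_T^*(X,\omega_{G/T})$, $u\otimes b\mapsto u\cdot j_G^*(b)$, is an isomorphism, so $K_T^*(X,\omega_{G/T})$ is generated as a $K_G^*(X)$-module by the image of $R(T,\omega_{G/T})=K_T^*(\pt,\omega_{G/T})$ under pullback along $X\to\pt$. By the previous two steps an identity of $K_G^*(X)$-linear operators on $K_T^*(X,\omega_{G/T})$ holds as soon as it holds on this image; and there, by naturality, the asserted identity is the restriction of the scalar identity \eqref{equation;gkrs} of $R(G)$-linear operators on $R(T,\omega_{G/T})$, which was established above from Theorem \ref{theorem;weyl-character}, the decomposition \eqref{equation;opposite-antisymmetrizer}, and Lemma \ref{lemma;weyl-denominator}. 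This proves the lemma.

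The bulk of the work is the bookkeeping in the first two steps: setting up the $W_G$-action on the twisted group $K_T^*(X,\omega_{G/T})$ so that the twist is transported $W_G$-equivariantly, verifying that it fixes $j_G^*$-classes, and verifying the projection formula for the twisted K-theoretic induction maps; once these are in place the reduction to \eqref{equation;gkrs} is purely formal. One should be aware that invoking Proposition \ref{proposition;kunneth} in this way presupposes that $\pi_1(G)$ is torsion-free; to obtain the lemma without that hypothesis one would instead argue geometrically, e.g.\ by realizing both composites as indices of families of elliptic operators over $X$ and reducing the identity to a fibrewise cohomological computation via the index theorem for families.
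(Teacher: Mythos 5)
Your first three steps reproduce the paper's argument for the case where $\pi_1(G)$ is torsion-free: combine the scalar identity \eqref{equation;gkrs} with the K\"unneth isomorphism of Proposition \ref{proposition;kunneth} (applied with $H=T$, $\tau=\omega_{G/T}$) and the $K_G^*(X)$-linearity of all four operators, so that it suffices to check the identity on classes of the form $u\cdot j_G^*(b)$ with $u\in R(T,\omega_{G/T})$, where it reduces to the case $X=\pt$. Up to that point the bookkeeping you describe (the $W_G$-action on the twisted group, the projection formula for the K-theoretic induction maps) is exactly what is needed and is correct.

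The genuine gap is the general case. The lemma is stated \emph{without} any hypothesis on $\pi_1(G)$ --- deliberately so, since Theorem \ref{theorem;gkrs} is meant to hold for arbitrary compact connected $G$ --- and you correctly notice that your argument presupposes torsion-freeness, but the escape route you sketch does not work as stated: the vertical map $J_M^\op$ is a signed sum of Weyl-group translates, not the index of a family of elliptic operators, so "realizing both composites as indices of families and applying the families index theorem" does not apply to the left-hand side of the square, and in any case a cohomological (Chern-character) computation would at best prove the identity modulo torsion. What is actually needed is a reduction to the torsion-free case: choose a finite covering $\phi\colon\tilde G\to G$ by a compact connected group with $\pi_1(\tilde G)$ torsion-free, let $\tilde T=\phi^{-1}(T)$, and use the fact (Snaith, \cite[Lemma~2.4]{snaith;kunneth}) that the pullback $\phi^*\colon K_T^*(X,\tau)\to K_{\tilde T}^*(X,\tau)$ is \emph{injective}. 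Since $\ker\phi$ is central in $\tilde G$ and $W$-invariant in $\tilde T$, the operators $\partial_G$, $\partial_H$, $J_M^\op$ and multiplication by $j_H^*(\e(\Dirac_M)^*)$ all commute with $\phi^*$, so the commutativity of the square for $G$ follows from that for $\tilde G$. Without this (or some substitute for it) your proof only establishes the lemma under an extra hypothesis that the statement does not carry.
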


\begin{proof}
First assume that $\pi_1(G)$ is torsion-free.  Then the result follows
by combining \eqref{equation;gkrs} with the K\"unneth formula
(Proposition \ref{proposition;kunneth}) and the $K_G^*(X)$-linearity
of $\partial_G$ and $\partial_H$.  If $\pi_1(G)$ is not torsion-free,
we choose a covering $\phi\colon\tilde{G}\to G$ of $G$ by a compact
connected $\tilde{G}$ such that $\pi_1(\tilde{G})$ is torsion-free,
and we let $\tilde{T}$ be the maximal torus $\phi^{-1}(T)$ of
$\tilde{G}$.  Any extension $T^{(\tau)}$ of $T$ induces an extension
$\tilde{T}^{(\tau)}$ of $\tilde{T}$.  It follows from
\cite[Lemma~2.4]{snaith;kunneth} that the pullback map
$$
\phi^*\colon K_T^*(X,\tau)\longto K_{\tilde{T}}^*(X,\tau)
$$
is injective.  Since the kernel of $\phi$ is a central subgroup of
$\tilde{G}$ and a $W$-invariant subgroup of $\tilde{T}$, the pullback
map is natural with respect to each of the maps occurring in the
identity \eqref{equation;gkrs}:
$\phi^*\circ\partial_G=\partial_{\tilde{G}}\circ\phi^*$, etc.
Therefore the commutativity of the diagram for the group $G$ follows
from the commutativity for $\tilde{G}$.
\end{proof}

For $w\in W^H$ and $a\in K_T^*(X,\omega_{G/T})$ put
$$a_w=\partial_H(w^{-1}(a))\in K_T^*(X,\omega_M).$$
We call the $W^H$-tuple $(a_w)_{w\in W^H}$ the \emph{multiplet}
generated by $a$.  Let $f\colon1\to T$ be the trivial homomorphism,
which induces the forgetful map
$$f^*\colon K_T^*(X,\omega_M)\longto K^*(X,\omega_M).$$

\begin{theorem}\label{theorem;gkrs}
Suppose that $H\ne G$.  Let $(a_w)_{w\in W^H}$ be a multiplet in
$K_T^*(X,\omega_M)$.  Then $\sum_{w\in W^H}\det(w)f^*(a_w)=0$.
\end{theorem}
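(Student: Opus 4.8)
The plan is to recognize $\sum_{w\in W^H}\det(w)a_w$ as $j_H^*(\e(\Dirac_M)^*)$ times $\partial_G$ of the generating class, and then to note that this ``Euler factor'' has virtual dimension zero as soon as $H\ne G$, so that it is annihilated by the forgetful map.

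Let $a\in K_T^*(X,\omega_{G/T})$ be the class generating the multiplet, so that $a_w=\partial_H\bigl(w^{-1}(a)\bigr)$. First I would use the additivity of $\partial_H$ together with the definition $J_M^\op=\sum_{w\in W^H}\det(w)w^{-1}$ to rewrite
$$
\sum_{w\in W^H}\det(w)a_w=\partial_H\bigl(J_M^\op(a)\bigr).
$$
Then I would invoke Lemma \ref{lemma;gkrs}, whose commuting square gives, as maps $K_T^*(X,\omega_{G/T})\to K_T^*(X,\omega_M)$, the identity $\partial_H\circ J_M^\op=j_H^*(\e(\Dirac_M)^*)\cdot\partial_G$, the first factor on the right being multiplication by the pullback to $K_T^0(X,\omega_M)$ of the class $j_H^*(\e(\Dirac_M)^*)\in R(T,\omega_M)$. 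Combining the two displays,
$$
\sum_{w\in W^H}\det(w)a_w=j_H^*(\e(\Dirac_M)^*)\cdot\partial_G(a).
$$

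Finally I would apply $f^*$. Since $f^*$ is natural and compatible with multiplication by classes pulled back from a point, it is enough to show that $j_H^*(\e(\Dirac_M)^*)$ maps to $0$ under the forgetful (``virtual dimension'') map $R(T,\omega_M)\to K^0(\pt,\omega_M)$. Here $\e(\Dirac_M)^*=[(S^0)^*]-[(S^1)^*]$, where $S=S^0\oplus S^1$ is the spinor module of $\Cl(\m)$, so its virtual dimension is $\dim S^0-\dim S^1$; since $\dim\m=\abs{\ca{R}_M}$ is even and $H\ne G$ forces it to be \emph{positive}, the two half-spinor modules have equal dimension $2^{\frac12\dim\m-1}$ and this difference vanishes. (Equivalently, by Lemma \ref{lemma;weight-euler}\eqref{item;euler-twist} and the level shift of Remark \ref{remark;shift}, $j_H^*(\e(\Dirac_M)^*)$ corresponds to $e^{\rho_M}\prod_{\alpha\in\ca{R}_M^+}(1-e^{-\alpha})$, and setting $e^\beta=1$ throughout leaves $\prod_{\alpha\in\ca{R}_M^+}(1-1)$, which is $0$ precisely because $\ca{R}_M^+\ne\emptyset$.) Hence $f^*\bigl(\sum_{w\in W^H}\det(w)a_w\bigr)=0$.

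The hard part, and the only place the hypothesis $H\ne G$ enters, is this last vanishing: the Euler factor must be dimensionally trivial, which fails for $H=G$ (there it has virtual dimension $1$, $W^H$ is trivial, and the statement collapses). Everything else is routine — additivity of $\partial_H$, Lemma \ref{lemma;gkrs} (already proved for all $G$ by the covering trick, so no condition on $\pi_1(G)$ is needed), and the multiplicativity of the forgetful map.
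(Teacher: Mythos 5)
Your proposal is correct and follows essentially the same route as the paper: rewrite the alternating sum as $\partial_H J_M^\op(a)$, apply Lemma \ref{lemma;gkrs} to identify it with $j_H^*(\e(\Dirac_M)^*)\partial_G(a)$, and observe that $f^*$ kills the Euler factor because $\ca{R}_M^+\ne\emptyset$ forces $\prod_{\alpha\in\ca{R}_M^+}(1-1)=0$. The dimension-count reformulation via the half-spinor modules is a harmless alternative phrasing of the same vanishing.
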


\begin{proof}
Since $H\ne G$, we have $\ca{R}_M\ne\emptyset$ and hence
$$
f^*j_H^*(\e(\Dirac_M)^*)
=\prod_{\alpha\in\ca{R}_M^+}f^*\bigl(e^{-\alpha/2}-e^{\alpha/2}\bigr)
=\prod_{\alpha\in\ca{R}_M^+}(1-1)=0
$$
by Lemma \ref{lemma;weight-euler}\eqref{item;euler-twist}.  Therefore
$$
\sum_{w\in W^H}\det(w)f^*(a_w)=f^*\partial_HJ_M^\op(a)
=f^*\bigl(j_H^*(\e(\Dirac_M)^*)\partial_G(a)\bigr)=0
$$
by Lemma \ref{lemma;gkrs}.
\end{proof}

This result reduces to that of \cite{gross-kostant-ramond-sternberg}
by pulling back a multiplet on $X$ to a point, i.e.\ by applying the
augmentation map $K_T^*(X,\omega_M)\to R(T,\omega_M)$.  It must be
said that this K-theory version of the multiplet theorem is much
weaker than the original version and the later version of
\cite{kostant;cubic-dirac-multiplet}.  For instance, there is no
telling whether the elements of a multiplet are distinct or even
nonzero.  (However, suppose that $\pi_1(G)$ is torsion-free and that
$a$ is of the form $a=e^\lambda j_G^*(b)$, where $\lambda$ is a
strictly $G$-dominant character and $b\in K_G^*(X)$ is nonzero.  Then
$a_w=\partial_H(w^{-1}(e^\lambda))j_G^*(b)$.  The elements
$\partial_H(w^{-1}(e^\lambda))$ are restrictions to $T$ of irreducible
$H$-modules with distinct highest weights, and so by the K\"unneth
formula the multiplet elements $a_w$ are distinct and nonzero.)

\appendix

\section{Twisted K-theory}\label{section;twist}

In this appendix we summarize the necessary facts from twisted
K-theory in a form suited to our purpose.  Like the original treatment
by Donovan and Karoubi \cite{donovan-karoubi;graded-local}, we cover
only K-theory twisted by torsion classes.  We denote by $X$ a compact
topological space and by $G$ and $H$ two (not necessarily connected)
compact Lie groups.  Contrary to our convention elsewhere in the paper
we do not assume $H$ to be a subgroup of $G$.

\subsection{Twists and twisted vector bundles}\label{subsection;bundle}

A \emph{twist} of $X$ is a pair $\tau=\bigl(P,H^{(\tau)}\bigr)$, where
$\pr\colon P\to X$ is a principal $H$-bundle over $X$ and $H^{(\tau)}$
is a central extension of $H$ by $\U(1)$.  We regard $P$ as an
$H^{(\tau)}$-space on which the central circle $\U(1)$ acts trivially.
A \emph{$\tau$-twisted vector bundle over $X$} is an
$H^{(\tau)}$-equivariant complex vector bundle over $P$ which is
\emph{of level~$1$}, in the sense that the central circle of
$H^{(\tau)}$ acts on $E$ by scalar multiplication on the fibres.

A \emph{morphism} between $\tau$-twisted vector bundles $E_1$ and
$E_2$ is an $H^{(\tau)}$-equivariant vector bundle homomorphism $E_1\to
E_2$.  We denote by $\lie{Vec}(X,\tau)$ the category of $\tau$-twisted
vector bundles.  This is an additive category, in which there is an
obvious notion of an exact sequence.  Thus we can form the
Grothendieck group of $\lie{Vec}(X,\tau)$, which we will denote by
$K(X,\tau)$.

A twist $\tau$ is a simple example of a \emph{gerbe with band} $\U(1)$
over $X$, and $K(X,\tau)$ is called the \emph{$\tau$-twisted K-group}
of $X$, or the \emph{K-group of $X$ with coefficients in $\tau$}.  See
\cite[\S\,2]{bouwknegt-carey-mathai-murray-stevenson;twisted} or
\cite[\S\,2.5]{tu-xu-laurent-gengoux;twisted-differentiable-stacks}
for more general notions of gerbe and a comparison with other versions
of twisted K-theory.

We can multiply a twisted bundle $F$ by an ordinary vector bundle $E$
on $X$ by the rule $E\cdot F=\pr^*E\otimes F$.  This rule turns
$K(X,\tau)$ into a $K(X)$-module.  More generally, let
$\tau_1=\bigl(P_1,H_1^{(\tau_1)}\bigr)$ and
$\tau_2=\bigl(P_2,H_2^{(\tau_2)}\bigr)$ be two twists of $X$.  The
\emph{sum} of $\tau_1$ and $\tau_2$ is the twist
$\tau=\bigl(P,H^{(\tau)}\bigr)$, where $P$ is the fibred product
$P_1\times_XP_2$, viewed as a principal bundle over $X$ with structure
group $H=H_1\times H_2$, and the central extension $H^{(\tau)}$ is the
quotient of $H_1^{(\tau_1)}\times H_2^{(\tau_2)}$ by the anti-diagonal
copy of $\U(1)$.  Any $\tau_1$- and $\tau_2$-twisted bundles on $X$
can be lifted to $P$ and the tensor product of the lifts is a
$\tau_1+\tau_2$-twisted bundle.  This defines a multiplication law
$$
K(X,\tau_1)\times K(X,\tau_2)\longto K(X,\tau_1+\tau_2).
$$

A twist $\tau$ pulls back under a continuous map $f\colon Y\to X$ in
an evident way, and we have an induced homomorphism
$$f^*\colon K(X,\tau)\longto K(Y,f^*\tau).$$

The twist $\tau$ is \emph{(Morita) trivial} if there exists a
\emph{trivialization}, i.e.\ a principal $H^{(\tau)}$-bundle $Q$ over
$X$ such that $P$ is the quotient of $Q$ by the central circle of
$H^{(\tau)}$.  (If the extension $H^{(\tau)}$ of $H$ is trivial, then
the twist $\tau$ is Morita trivial, but the converse is false.)  If
$\tau$ is Morita trivial, then
\begin{equation}\label{equation;trivial-twist}
K(X,\tau)\cong K_{H^{(\tau)}}(Q)\cong K\bigl(Q/H^{(\tau)}\bigr)\cong
K(X).
\end{equation}
This isomorphism depends on the choice of the trivialization $Q$.

\begin{example}\label{example;orientation}
Let $V$ be an oriented real vector bundle of rank $m$ over $X$
provided with a Riemannian metric.  The \emph{orientation twist} or
\emph{orientation system} associated with $V$ is the pair
$\omega_V=(\SO(V),\Spin^c(m))$.  Here $\SO(V)$ is the oriented
orthogonal frame bundle of $V$, which has structure group $\SO(m)$,
and $\Spin^c(m)$ is the $\Spin^c$-group of the Euclidean space $\R^m$.
The orientation system is Morita trivial precisely when $V$ possesses
a $\Spin^c$-structure, i.e.\ an orientation in K-theory.  If $X$ is an
oriented Riemannian manifold, the \emph{orientation twist $\omega_X$
of $X$} is defined to be the orientation twist of the tangent bundle
of $X$.
\end{example}

\begin{example}\label{example;extension}
Let $\tau$ be a central extension of a compact Lie group $H$ by
$\U(1)$.  By letting $H$ act on itself by right multiplication we can
view $H$ as a principal bundle over the one-point space $X=\pt$.  From
this point of view $\tau$ is nothing but a twist of a point.  As such
it is Morita trivial, and therefore it follows from
\eqref{equation;trivial-twist} that $K(\pt,\tau)\cong\Z$.
\end{example}

Suppose the compact Lie group $G$ acts continuously on $X$.  A twist
$\tau=\bigl(P,H^{(\tau)}\bigr)$ of $X$ is \emph{$G$-equivariant} if
the principal bundle $P$ is $G$-equivariant, i.e.\ equipped with a
$G$-action by bundle maps which lifts the $G$-action on the base $X$
and which commutes with the action of the structure group $H$.  A
\emph{$G$-equivariant $\tau$-twisted vector bundle over $X$} is a
$G\times H^{(\tau)}$-equivariant complex vector bundle over $P$ which
is of level~$1$ with respect to $H^{(\tau)}$.  Such twisted bundles
are the objects of an exact category $\lie{Vec}_G(X,\tau)$, whose
K-group $K_G(X,\tau)$ is the \emph{equivariant} K-group of $X$ with
coefficients in $\tau$.

\begin{example}\label{example;equivariant-orientation}
The orientation twist of an oriented Riemannian vector bundle $V$ is
equivariant with respect to any compact Lie group which acts on $V$ by
orientation-preserving isometric bundle maps.  In particular, the
orientation twist of an oriented Riemannian manifold $X$ is
equivariant with respect to any compact Lie group which acts on $X$ by
orientation-preserving isometries.
\end{example}

\begin{example}\label{example;equivariant-extension}
Let $X=\pt$ and let $\tau$ be as in Example \ref{example;extension}.
We turn $\tau$ into an $H$-equivariant twist by letting $H$ act on $H$
by left multiplication.  Every $H\times H^{(\tau)}$-equivariant vector
bundle $E$ over $H$ trivializes equivariantly to a product bundle
$E\cong H\times U$.  On this product bundle $H$ acts on the base $H$
by left multiplication and trivially on the vector space $U$, and
$H^{(\tau)}$ acts on the base by right multiplication and linearly on
the fibre.  Thus the bundle $E$ is of level~$1$ if and only if the
$H^{(\tau)}$-module $U$ is of level~$1$.  It follows that the category
$\lie{Vec}_H(\pt,\tau)$ is equivalent to the category of level~$1$
$H^{(\tau)}$-modules.  We conclude that $K_H(\pt,\tau)\cong
R(H,\tau)$, the twisted representation module of $H$.
\end{example}

\begin{example}\label{example;equivariant-extension-space}
Generalizing Example \ref{example;equivariant-extension}, we let $X$
be a topological $H$-space and $p\colon X\to\pt$ the constant map.
The $H$-equivariant twist $\tau$ pulls back to the $H$-equivariant
twist $p^*\tau$ on $X$.  We view $X$ as an $H^{(\tau)}$-space on which
the central circle acts trivially.  As in Example
\ref{example;equivariant-extension} one shows that the category
$\lie{Vec}(X,p^*\tau)$ is equivalent to the category of
$H^{(\tau)}$-equivariant level~$1$ vector bundles on $X$.  Thus
$K_H(X,p^*\tau)$ is the Grothendieck group of $H^{(\tau)}$-equivariant
level~$1$ vector bundles on $X$.  To simplify the notation we will
often write this group as $K_H(X,\tau)$.
\end{example}

\begin{example}[induced twists]\label{example;induced-gerbe}
Continuing Example \ref{example;equivariant-extension-space}, we
suppose that $H$ is a subgroup of $G$ and let $i\colon H\to G$ the
inclusion map.  We view the product $G\times X$ as a $G$-equivariant
principal $H$-bundle over the associated bundle $G\times^HX$.  Hence
the pair
$$i_*\tau=\bigl(G\times X,H^{(\tau)}\bigr)$$
is a $G$-equivariant twist of $G\times^HX$, called the twist
\emph{induced} by $\tau$.  If $E$ is a $\tau$-twisted vector bundle
over $X$, then $G\times E$ is a $i_*\tau$-twisted vector bundle over
$G\times^HX$.  The map $E\mapsto G\times E$ defines an equivalence of
categories between $\lie{Vec}(X,\tau)$ and
$\lie{Vec}(G\times^HX,i_*\tau)$.  Thus we have a natural isomorphism
$$K_G(G\times^HX,i_*\tau)\cong K_H(X,\tau).$$
For $X=\pt$ this specializes to $K_G(M,i_*\tau)\cong R(H,\tau)$, where
$M=G/H$.  For simplicity we will often write $K_G(G\times^HX,\tau)$
instead of $K_G(G\times^HX,i_*\tau)$.
\end{example}

\subsection{Relative twisted K-theory}\label{subsection;relative}

Relative twisted K-classes are presented by complexes of twisted
vector bundles.  Let $X$ be a compact $G$-space and let
$\tau=\bigl(P,H^{(\tau)}\bigr)$ be a $G$-equivariant twist of $X$.
Consider the category $\lie{Vec}_G^*(X,\tau)$ of bounded complexes
associated with the additive category $\lie{Vec}_G(X,\tau)$.  Thus an
object of $\lie{Vec}_G^*(X,\tau)$ is a $\Z$-graded $G$-equivariant
$\tau$-twisted bundle $E^*$ such that $E^j=0$ for almost all $j$,
furnished with a differential of degree $1$.  Let $Y$ be a closed
$G$-invariant subspace of $X$.  We denote by $\lie{Vec}_G^*(X,Y,\tau)$
the full subcategory of $\lie{Vec}_G^*(X,\tau)$ comprising all objects
$E^*$ with the property that the restriction of $E^*$ to the subspace
$\pr^{-1}(Y)$ of $P$ is an \emph{exact} complex.  The set
$L_G(X,Y,\tau)$ of isomorphism classes of $\lie{Vec}_G^*(X,Y,\tau)$ is
an abelian monoid.  A quotient of $L_G(X,Y,\tau)$ by an appropriate
submonoid (which is defined in the same way as in ordinary K-theory;
see \cite[\S\,3]{segal;equivariant-k-theory;;1968}) is the
\emph{relative} twisted K-group $K_G(X,Y,\tau)$ of $X$.  The Euler
characteristic map $L_G(X,Y,\tau)\to K_G(X,\tau)$ defined by
$[E^*]\mapsto\sum_j(-1)^j[E^j]$ induces a homomorphism
$$K_G(X,Y,\tau)\to K_G(X,\tau),$$
which is an isomorphism if $Y$ is empty.

The group $K_G^0(X,Y,\tau)=K_G(X,Y,\tau)$ is the degree~$0$ part of
the $\Z/2\Z$-graded K-group $K_G^*(X,Y,\tau)$.  The degree~$1$ part is
defined by
$$
K_G^1(X,Y,\tau)
=K_G\bigl(X\times[0,1],(Y\times[0,1])\cup(X\times\{0,1\}),\tau\bigr).
$$
%

\subsection{The Thom isomorphism}\label{subsection;isomorphism}

Let $X$ be a compact $G$-space and let $\tau$ be a $G$-equivariant
twist of $X$.  Let $\pi\colon V\to X$ be a $G$-equivariant oriented
real vector bundle of even rank $m=2l$ equipped with an invariant
Riemannian metric, and let $\omega_V$ be the orientation twist of $V$.
We denote the unit ball bundle of $V$ by $BV$, the unit sphere bundle
by $SV$, and the zero section by $\zeta\colon X\to V$.  Let
$P=\SO(V)$.  The spinor module $S=S^0\oplus S^1$ of the Clifford
algebra $\Cl(\R^{2l})$ is a level~$1$ $\Spin^c(2l)$-module, so the
product bundle $E=\pi^*(P)\times S$ is a $\Z/2\Z$-graded
$\pi^*(\omega_V)$-twisted vector bundle over (the total space of) $V$.
Consider the two-term $G\times\Spin^c(2l)$-equivariant complex of
vector bundles
\begin{equation}\label{equation;thom}
E^0\overset\cliff\longto E^1
\end{equation}
defined by placing the term $E^j$ in degree $j$ and for each $v\in V$
letting $\cliff(v)\colon E_v^0\to E_v^1$ be Clifford multiplication by
$v$.  Since $\cliff(v)$ is an isomorphism for $v\ne0$, the complex
\eqref{equation;thom} defines a class in $L_G(BV,SV,\pi^*\omega_V)$,
and hence a class
$$\th(V)\in K_G^0(BV,SV,\pi^*\omega_V),$$
which is called the \emph{Thom class} of $V$.  The \emph{Thom map} is
the map
$$
\zeta_*\colon K_G^*(X,\tau)\to
K_G^*\bigl(BV,SV,\pi^*(\tau+\omega_V)\bigr)
$$
defined by $\zeta_*(a)=\pi^*(a)\th(V)$.  The following result is
\cite[Theorem~IV.6.21]{karoubi;k-theory;;1978}.  See also
\cite[\S\,3.2]{carey-wang;thom} for a discussion closer to our
treatment.

\begin{theorem}\label{theorem;thom}
The Thom map is an isomorphism of graded $K_G^*(X)$-modules.
\end{theorem}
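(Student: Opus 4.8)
The plan is to follow the classical Atiyah--Bott--Shapiro reduction of a Thom isomorphism to Bott periodicity, keeping track of both the orientation twist and the $G$-action throughout. First I would record the formal properties of the assignment $V\mapsto\th(V)$ that are visible from its definition by Clifford multiplication: the Thom map $\zeta_*=\pi^*(-)\cdot\th(V)$ is $K_G^*(X)$-linear and preserves the $\Z/2\Z$-grading, and it is natural, in that $\zeta_*\circ g^*=g^*\circ\zeta_*$ for a $G$-map $g\colon X'\to X$ once one identifies $g^*\th(V)=\th(g^*V)$. The decisive input is \emph{multiplicativity}: for $G$-equivariant oriented Riemannian bundles $V_1,V_2$ of even rank over $X$ there is, by the argument of Lemma \ref{lemma;functor} applied to the reduction of structure group along $\SO(V_1)\times\SO(V_2)\to\SO(V_1\oplus V_2)$, an equivalence of twists $\omega_{V_1\oplus V_2}\sim\omega_{V_1}+\omega_{V_2}$; and under this equivalence, together with the standard identification of the relevant relative K-groups, $\th(V_1\oplus V_2)$ corresponds to the external product $\th(V_1)\cdot\th(V_2)$, which in turn rests on the isomorphism $S_{V_1\oplus V_2}\cong S_{V_1}\mathbin{\hat\otimes}S_{V_2}$ of $\Z/2\Z$-graded Clifford modules, equivariant for $\Spin^c$ of the two summands.

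Next I would dispose of the base case of a trivial bundle $V=X\times U$, with $U$ a real $G$-representation of even dimension on which $G$ acts orientation-preservingly (replace $U$ by $U\oplus U$ if not). Then $\omega_V$ is pulled back from $\omega_U$, and the complexification $V\oplus V=X\times(U\otimes_\R\C)$ carries a $G$-invariant complex structure and hence a $G$-invariant $\Spin^c$-structure, so that $\omega_{V\oplus V}\sim2\omega_V$ is Morita trivial. More generally, for any complex $G$-bundle the twist $\pi^*\omega$ occurring in the theorem is Morita trivial, and under the identification \eqref{equation;trivial-twist} the map $\zeta_*$ becomes, up to canonical identifications, the ordinary Thom map of the underlying complex bundle (equivalently, multiplication by the Bott class); that this is an isomorphism is the equivariant Bott periodicity theorem, which I would invoke from \cite{atiyah;bott-periodicity-index} and \cite{segal;equivariant-k-theory;;1968}. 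In particular $\zeta_*$ is an isomorphism for $V\oplus V$ whenever $V$ is real, oriented, of even rank.

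To treat an arbitrary such $V$ over a compact $G$-space $X$, I would embed it as a $G$-subbundle of a trivial bundle, $V\oplus W\cong X\times U$ with $U$ a $G$-module, after stabilizing (e.g.\ replacing $W$ by $W\oplus W\oplus V$) so as to assume $W$ itself $G$-equivariantly oriented of even rank. Pulling $W$ back to the ball bundle $BV$ and using multiplicativity, the composite of $\th(V)\cdot(-)$ with $\th(\pi^*W)\cdot(-)$ is $\th(X\times U)\cdot(-)$, an isomorphism by the base case; hence $\th(V)\cdot(-)$ is injective. Running the same embedding argument one level up, for the bundle $\pi^*W$ over the compact $G$-space $BV$ (whose complement inside the trivial bundle $\pi^*(V\oplus W)$ over $BV$ is $\pi^*V$), shows that $\th(\pi^*W)\cdot(-)$ admits a left inverse; composing this left inverse with the isomorphism $\th(X\times U)\cdot(-)$ then exhibits $\th(V)\cdot(-)$ as surjective. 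Thus $\zeta_*$ is an isomorphism in degree $0$, the degree-$1$ statement follows formally from the definition of $K_G^1$ in \S\,\ref{subsection;relative}, and additivity and naturality of the resulting isomorphism in $X$ are routine.

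The main obstacle is organizational rather than conceptual. One must make the twist bookkeeping precise --- above all the equivalence $\omega_{V_1\oplus V_2}\sim\omega_{V_1}+\omega_{V_2}$, where the $2$-torsion of the orientation class is what makes complexified bundles $\Spin^c$ --- and one must set up the multiplicativity identification over the \emph{compact} ball bundle $BV$ rather than over the noncompact total space of $V$, via the standard deformation of $\bigl(B(V_1\oplus V_2),S(V_1\oplus V_2)\bigr)$ onto the fibrewise product of the factors. Keeping everything $G$-equivariant costs nothing extra, since the embedding of $V$ in a trivial $G$-bundle and equivariant Bott periodicity are both available over any compact $G$-space and all maps in the diagram chase are $G$-equivariant; the reduction of the general relative group $K_G^*(X,Y,\tau)$ to the absolute case is immediate from the construction in \S\,\ref{subsection;relative}. (In essentially this form the non-equivariant case is \cite[Theorem~IV.6.21]{karoubi;k-theory;;1978} and \cite[\S\,3.2]{carey-wang;thom}.)
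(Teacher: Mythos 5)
The paper does not actually prove this theorem: it is quoted from Karoubi \cite[Theorem~IV.6.21]{karoubi;k-theory;;1978} and Carey--Wang \cite[\S\,3.2]{carey-wang;thom}. Your sketch reconstructs precisely the argument underlying those references --- the Atiyah--Bott--Shapiro reduction of the Thom isomorphism to Bott periodicity via multiplicativity of the Clifford--module Thom class, the complexification trick to trivialize the orientation twist of a complexified bundle, embedding $V$ in a trivial $G$-bundle (available for compact $X$ and compact $G$ by Segal), and the two-step injectivity/surjectivity argument run once over $X$ and once over $BV$ --- so in substance you are giving the proof the paper delegates, and the outline is sound.

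One point deserves slightly more care than your write-up gives it: the theorem carries an arbitrary auxiliary equivariant twist $\tau=(P,H^{(\tau)})$, and your base case only addresses the Morita triviality of the orientation twist $\pi^*\omega_V$, not of $\pi^*\tau$. The fix is routine but should be said: by definition $K_G^*(X,\tau)$ is the level-$1$ summand of $K_{G\times H^{(\tau)}}^*(P)$ with $P$ compact, so equivariant Bott periodicity for the group $G\times H^{(\tau)}$ acting on $P\times U_\C$ restricts to the level-$1$ summands and yields the twisted base case; the rest of your diagram chase is multiplication by Thom classes that do not involve $\tau$ and so passes through unchanged. With that addendum, and the twist bookkeeping for $\omega_{V_1\oplus V_2}\sim\omega_{V_1}+\omega_{V_2}$ that you already flag, the proposal is a correct proof.
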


\section{Central extensions}\label{section;central}

In this appendix we gather a few elementary facts regarding central
extensions.  The notation is as stated at the beginning of
\S\,\ref{section;induction}.  In particular $G$ denotes a compact
connected Lie group.  In addition $\check{G}$ denotes a
\emph{connected} central extension of $G$ by a \emph{compact} abelian
Lie group $C$,
$$
1\longto C\longto\check{G}\longto G\longto1.
$$

A complex $\check{G}$-module $V$ has \emph{central character}
$\chi\in\X(C)$ if the subgroup $C$ acts on $V$ by $c\cdot v=\chi(c)v$.
Let $\Rep^\chi(\check{G})$ be the category of finite-dimensional
complex $\check{G}$-modules of central character $\chi$.  We call the
Grothendieck group $R^\chi(\check{G})$ of $\Rep^\chi(\check{G})$ the
\emph{$\chi$-twisted representation module} of $G$.  The category
$\Rep^0(\check{G})$ (where $\chi=0$ is the trivial character) is
equivalent to $\Rep(G)$, so the groups $R^0(\check{G})$ and $R(G)$ are
isomorphic.  The tensor product functor
$$
\Rep^{\chi_1}(\check{G})\times\Rep^{\chi_2}(\check{G})\longto
\Rep^{\chi_1+\chi_2}(\check{G})
$$
induces a bi-additive map
$$
R^{\chi_1}(\check{G})\times R^{\chi_2}(\check{G})\longto
R^{\chi_1+\chi_2}(\check{G})
$$
In particular $R^\chi(\check{G})$ is an $R(G)$-module for all $\chi$.

\begin{section-lemma}\label{lemma;extension}
\begin{enumerate}
\item\label{item;isotype-extension}
The $R(G)$-module $R(\check{G})$ is the direct sum of the submodules
$R^\chi(\check{G})$ over all $\chi\in\X(C)$.  Each summand
$R^\chi(\check{G})$ is nonzero.
\item\label{item;invariant-extension}
Let $\check{T}\subset\check{G}$ be the inverse image of $T$.  Then
$\check{T}$ is a maximal torus of $\check{G}$; for each character
$\chi$ of $C$ the submodule $R^\chi(\check{T})$ of $R(\check{T})$ is
preserved by the $W_G$-action; and the restriction homomorphism
$R^\chi(\check{G})\to R^\chi(\check{T})$ is an isomorphism onto
$R^\chi(\check{T})^{W_G}$.
\end{enumerate}
\end{section-lemma}

\begin{proof}
Every $\check{G}$-module $V$ decomposes under the action of $C$ into a
direct sum $\bigoplus_{\chi\in\X(C)}V^\chi$ of isotypical submodules
$V^\chi$.  This decomposition is functorial and defines an equivalence
of categories
$$
\Rep(\check{G})\overset\sim\longto
\bigoplus_{\chi\in\X(C)}\Rep^\chi(\check{G}).
$$
Passing to Grothendieck groups we obtain the direct sum decomposition
in \eqref{item;isotype-extension}.  Each of the submodules
$R^\chi(\check{G})$ is nonzero, because there exists an irreducible
representation of $\check{G}$ with central character $\chi$, for
instance an appropriate subrepresentation of the formally induced
representation $\ind_C^{\check{G}}(\C_\chi)$.  Since $C$ is central in
$\check{G}$, the group $\check{T}$ is a maximal torus of $\check{G}$,
and the homomorphism $\check{G}\to G$ induces isomorphisms of root
systems $\ca{R}_{\check{G}}\cong\ca{R}_G$ and of Weyl groups
$W_{\check{G}}\cong W_G$.  The homomorphism $\check{T}\to T$ is
$W_G$-equivariant.  The $W_G$-action on $\check{T}$ fixes $C$ and
therefore the submodule $R^\chi(\check{T})$ of $R(\check{T})$ is
$W_G$-stable for each $\chi\in\X(C)$.  The isomorphism
$R^\chi(\check{G})\cong R^\chi(\check{T})^{W_G}$ now follows from
$R(\check{G})\cong R(\check{T})^{W_G}$.
\end{proof}

Let $V$ be an irreducible $\check{G}$-module.  By Schur's lemma, the
central subgroup $C$ acts on $V$ by a character $\chi_V$.  Define
\begin{equation}\label{equation;action}
c\cdot[V]=\chi_V(c)[V]
\end{equation}
for $c\in C$.  By linear extension, this formula defines a $C$-action
on the complexified representation ring $R(\check{G})_\C$ by ring
automorphisms.  The ring of $C$-invariants is
\begin{equation}\label{equation;galois-invariants}
(R(\check{G})_\C)^C\cong R(G)_\C.
\end{equation}
  
Recall that the \emph{rank} of a module $A$ over a domain $R$ is the
dimension of the vector space $F\otimes_RA$, where $F$ is the fraction
field of $R$, and is denoted by $\rank_R(A)$.

\begin{section-lemma}\label{lemma;rank}
$\rank_{R(G)}(R^\chi(\check{G}))=1$ for every $\chi\in\X(C)$.
\end{section-lemma}

\begin{proof}
Assume first that $C$ is finite.  Since $G$ and $\check{G}$ are
connected, the rings $R(G)$ and $R(\check{G})$ have no zero divisors,
so we can form the fraction fields $\K$ of $R(G)_\C$ and
$\check{\K}=R(\check{G})_\C\otimes_{R(G)_\C}\K$ of $R(\check{G})_\C$.
It follows from \eqref{equation;galois-invariants} that $\check{\K}$
is a Galois extension of $\K$ with Galois group $C$, which implies
$\dim_\K(\check{\K})=\abs{C}$.  For $\chi\in\X(C)$ let
$$\check{\K}^\chi=R^\chi(\check{G})_\C\otimes_{R(G)_\C}\K,$$
which is a $\K$-linear subspace of $\check{\K}$.  It follows from
Lemma \ref{lemma;extension}\eqref{item;isotype-extension} that as a
vector space over $\K$
$$\check{\K}=\bigoplus_{\chi\in\ca{X}(C)}\check{\K}^\chi,$$
where each of the summands is nonzero.  The number of summands is
$\abs{C}$ because of the fact that $\X(C)\cong C$, and therefore
$\dim_\K\bigl(\check{\K}^\chi\bigr)=1$ for all $\chi$.  Hence
$$
\rank_{R(G)}\bigl(R^\chi(\check{G})\bigr)
=\dim_\K\bigl(\check{\K}^\chi\bigr)=1.
$$
For general $C$ we make the basic observation, which appears to go
back to Shapiro \cite{shapiro;group-extensions-compact}, that the
compact central extension $\check{G}$ is the pushout of a
\emph{finite} central extension
$$
1\longto Z\longto\tilde{G}\longto G\longto1.
$$
One produces $\tilde{G}$ by choosing a Lie algebra homomorphism
$\kappa\colon\g\to\check{\g}$ which splits the exact sequence
$$0\to\lie{c}\to\check{\g}\to\g\to0.$$
One can choose $\kappa$ to be defined over $\Q$; this ensures that it
exponentiates to a Lie group homomorphism $\kappa\colon\tilde{G}\to
\check{G}$, where $\tilde{G}$ is a finite connected covering group of
$G$.  Let $Z$ be the kernel of the covering $\tilde{G}\to G$; then
$\kappa(Z)$ is contained in $C$.  Let $\chi$ be a character of $C$ and
$\kappa^*(\chi)$ its pullback to $Z$.  Then the $R(G)$-module
$R^\chi(\check{G})$ is isomorphic to $R^{\kappa^*(\chi)}(\tilde{G})$,
and we already know that the latter is of rank~$1$.
\end{proof}

\begin{section-example}\label{example;not-free}
Let $\check{G}_1=\check{G}_2=\SU(2)$ and
$$
\check{G}=\check{G}_1\times\check{G}_2\cong \Spin(4),\qquad
C=\{\pm(I,I)\},\qquad G=\check{G}/C\cong\SO(4).
$$
Then $R(\check{G})\cong R(\check{G}_1)\otimes
R(\check{G}_2)\cong\Z[x_1,x_2]$.  Identify $\X(C)$ with
$\Z/2\Z=\{0,1\}$; then $C$ acts on the monomial $x_1^{r_1}x_2^{r_2}\in
R(\check{G})$ with weight $(r_1+r_2)\bmod2$.  Hence $R(\check{G})$ is
the direct sum of the submodules
$$
R(G)\cong R^0(\check{G})=\bigoplus_{r_1+r_2\equiv0}\Z\cdot
x_1^{r_1}x_2^{r_2},\qquad
R^1(\check{G})=\bigoplus_{r_1+r_2\equiv1}\Z\cdot x_1^{r_1}x_2^{r_2},
$$
where the congruences are modulo $2$.  As a ring, 
$$R(G)\cong\Z[y_1,y_2,y_3]/(y_1y_2-y_3^2),$$
where the inclusion $R(G)\to R(\check{G})$ is given by
$$
y_1\longmapsto x_1^2,\qquad y_2\longmapsto x_2^2,\qquad y_3\longmapsto
x_1x_2.
$$
The twisted module $R^1(\check{G})$ is generated by $x_1$ and $x_2$,
which are subject to the single relation $y_3x_1-y_1x_2=0$.  Over the
quotient field of $R(G)$ the two generators are multiples of each
other, $x_2=(y_2y_3^{-1})x_1$ and $x_1=(y_1y_3^{-1})x_2$, so
$R^1(\check{G})$ is of rank $1$, as predicted by Lemma
\ref{lemma;rank}.  However, $R^1(\check{G})$ is not generated by any
single element and is therefore not free.
\end{section-example}

\section{Shifted anti-invariants}\label{section;anti-invariant}

This appendix is devoted to the proof of the statement below
(Proposition \ref{proposition;anti-invariant} in the main text).  We
use the notation defined in \S\,\ref{section;dirac}.  In particular,
$G$ is a compact connected Lie group with maximal torus $T$,
$\omega_{G/T}$ is the orientation system of the flag variety $G/T$
defined in \eqref{equation;orientation}, $J_G$ is the antisymmetrizer
\eqref{equation;antisymmetrizer}, and $\d_G$ is the Weyl denominator
\eqref{equation;weyl-denominator}.  Recall that $A^{-W_G}$ denotes the
set of anti-invariant elements of a $W_G$-module $A$.  If $G$ is
semisimple and simply connected, then $\rho_G\in\X(T)$ and therefore
the $R(T)$-module $R(T,\omega_{G/T})$ is $W_G$-equivariantly
isomorphic to $R(T)$ by Lemma
\ref{lemma;weight-euler}\eqref{item;Romega}.  The proposition is then
a standard fact; see e.g.\ \cite[\S\,VI.3,
Proposition~2]{bourbaki;groupes-algebres}.  We will deduce the general
case from this special case.

\begin{proposition*}
\begin{enumerate}
\item\label{item;preserve}
The $W_G$-action on $R\bigl(T^{(\omega_{G/T})}\bigl)$ preserves
$R(T,\omega_{G/T})$.
\item\label{item;free}
The set of anti-invariants $R(T,\omega_{G/T})^{-W_G}$ is a free
$R(G)$-module of rank $1$ generated by $\d_G$.
\item\label{item;antisymmetrizer}
The elements $J_G(e^\lambda)$, with $\lambda\in\rho_G+\X(T)$ strictly
dominant, form a basis of the $\Z$-module $R(T,\omega_{G/T})^{-W_G}$.
\end{enumerate}
\end{proposition*}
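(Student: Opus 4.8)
The plan is to reduce the statement to the case where $G$ is semisimple and simply connected, where all three assertions are classical (\cite[\S\,VI.3, Proposition~2]{bourbaki;groupes-algebres}), by passing to a suitable finite covering of $G$ and keeping track of a grading by a character group.

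Assertion (i) is immediate from the level shift of Remark~\ref{remark;shift}, which identifies $R(T,\omega_{G/T})$ with the free $R(T)$-module $\Z[\rho_G+\X(T)]$ on the affine lattice $\rho_G+\X(T)\subset\X(T)_\Q$. For every $w\in W_G$ the difference $w(\rho_G)-\rho_G$ lies in the root lattice, hence in $\X(T)$, so the $W_G$-action on $\X(T)_\Q$ carries $\rho_G+\X(T)$ to itself; the resulting action on $\Z[\rho_G+\X(T)]\cong R(T,\omega_{G/T})$ is the restriction of the action on $R\bigl(T^{(\omega_{G/T})}\bigr)$, because $\eps_0$ is central. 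This is (i).

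For (ii) and (iii) I will use a finite covering $\pi\colon\tilde G\to G$ with $\tilde G=G'\times S$, where $G'$ is the universal cover of the semisimple part of $G$ and $S$ is a torus; write $Z=\ker\pi$ (a finite central subgroup) and $\tilde T=\pi^{-1}(T)$ (a maximal torus of $\tilde G$). Then $\pi^*$ identifies $\X(T)$ with $\{\chi\in\X(\tilde T)\mid\chi|_Z=1\}$, and restriction to $Z$ gives an isomorphism $\X(\tilde T)/\X(T)\cong\X(Z)$ on which $W_G=W_{\tilde G}$ acts trivially, $Z$ being central. The root system, the Weyl group, the element $\rho$, the Weyl denominator $\d$ and the antisymmetrizer $J$ do not change under $\pi$; in particular $\rho_{\tilde G}=\rho_G\in\X(\tilde T)$, so $\omega_{\tilde G/\tilde T}$ is Morita trivial and $R\bigl(\tilde T,\omega_{\tilde G/\tilde T}\bigr)$ is $W_G$-equivariantly isomorphic to $R(\tilde T)$ by Lemma~\ref{lemma;weight-euler}\eqref{item;Romega}. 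Combining the semisimple simply connected case for $G'$ with $R(\tilde G)=R(G')\otimes R(S)$ (and $W_G$ acting trivially on $R(S)$), the proposition holds for $\tilde G$: the module $R(\tilde T)^{-W_G}$ is free of rank~$1$ over $R(\tilde G)=R(\tilde T)^{W_G}$ on $\d_{\tilde G}=\d_G$, with $\Z$-basis the elements $J_G(e^\lambda)$ for $\lambda\in\X(\tilde T)$ strictly dominant.

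Now I grade $R(\tilde T)=\Z[\X(\tilde T)]$ by $\X(Z)\cong\X(\tilde T)/\X(T)$. Each homogeneous component is a free $R(T)$-module of rank~$1$ and, since $W_G$ acts trivially on $\X(Z)$, a $W_G$-submodule; hence the grading passes to anti-invariants. Under the identification of (i), $R(T,\omega_{G/T})=\Z[\rho_G+\X(T)]$ is exactly the homogeneous component of degree $c_0:=\rho_G|_Z$, compatibly with the $R(G)$-module structures (with $R(G)=R(T)^{W_G}$ the degree-$0$ part of $R(\tilde G)$) and with $\d_G\leftrightarrow\d_{\tilde G}$. Taking the degree-$c_0$ part of the description of $R(\tilde T)^{-W_G}$ above: since $\d_{\tilde G}$ is homogeneous of degree $c_0$, the degree-$c_0$ part of $R(\tilde G)\d_{\tilde G}$ is $R(G)\d_G$, which is free of rank~$1$ because $R(G)$ has no zero divisors and $\d_G$ is a nonzero element of the domain $R(\tilde T)$; this gives (ii). Likewise $J_G(e^\lambda)$ is homogeneous of degree $\lambda|_Z$, so the basis elements of degree $c_0$ are precisely those with $\lambda|_Z=c_0$, that is, $\lambda\in\rho_G+\X(T)$, strictly dominant; these form a $\Z$-basis of $R(T,\omega_{G/T})^{-W_G}=\bigl(R(\tilde T)^{-W_G}\bigr)_{c_0}$, which is (iii). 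The one delicate point is the bookkeeping with the $\X(Z)$-grading — that $W_G$ preserves each homogeneous component (for which centrality of $Z$ is essential) and that, under the level shift of Remark~\ref{remark;shift}, the twist $\omega_{G/T}$ corresponds exactly to the single component of degree $c_0$ with the correct module structure and with $\d_G$ as marked generator; everything else is formal.
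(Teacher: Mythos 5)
Your proof is correct and follows essentially the same route as the paper's: pass to a finite cover $\tilde G=G'\times S$ by a simply connected group times a torus, where the assertions are classical, and then cut $R(T,\omega_{G/T})=\Z[\rho_G+\X(T)]$ out of $R(\tilde T)$ using the central kernel $Z$. The only difference is cosmetic --- you track $Z$ via an integral $\X(Z)$-grading of $R(\tilde T)$, whereas the paper uses the dual action of the kernel on the complexified ring $R(\tilde T)_\C$ and takes invariants; your grading also yields assertion (iii) explicitly, for which the paper simply defers to Bourbaki's method.
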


\begin{proof}
Put
$$
W=W_G,\qquad J=J_G,\qquad
\omega=\omega_{G/T},\qquad\d=\d_G,\qquad\rho=\rho_G.
$$
We identify $R(T)$ with $\Z[\X(T)]$ and $R(T,\omega)$ with
$R(T)\,e^\rho=\Z[\rho+\X(T)]$ as in Remark \ref{remark;shift}.  The
fact that $\rho-w(\rho)$ is in the root lattice for all $w\in W$
implies that the $W$-action on $\X(T)_\Q$ preserves the affine lattice
$\rho+\X(T)$.  This proves~\eqref{item;preserve}.  Let
$\phi\colon\tilde{G}\to G$ be a compact connected covering group which
is the product $\tilde{G}=C\times\bar{G}$ of a torus $C$ and a simply
connected group $\bar{G}$.  Let $\tilde{T}$ be the maximal torus
$\phi^{-1}(T)$ of $\tilde{G}$, and identify $\X(T)$ with its image
$\phi^*(\X(T))$.  Let $\bar{T}$ be the maximal torus
$\bar{G}\cap\tilde{T}$ of $\bar{G}$.  Since $\bar{G}$ is simply
connected, $\rho\in\X(\bar{T})$ and therefore $\d\in R(\bar{T})$.  We
have
$$
R(\tilde{T})^W=R(C\times\bar{T})^W=\bigl(R(C)\otimes_\Z
R(\bar{T})\bigr)^W=R(C)\otimes_\Z R(\bar{T})^W,
$$
because $W$ acts trivially on $R(C)$ and $R(C)$ is a free abelian
group.  Since $\bar{G}$ is simply connected, it follows from
\cite[\S\,VI.3, Proposition~2]{bourbaki;groupes-algebres} that
$$R(\bar{T})^{-W}=R(\bar{T})^W\cdot\d.$$
Therefore
\begin{multline}\label{equation;generator}
R(\tilde{T})^{-W}=R(C\times\bar{T})^{-W}=\bigl(R(C)\otimes_\Z
R(\bar{T})\bigr)^{-W}=R(C)\otimes_\Z R(\bar{T})^{-W}
\\
=R(C)\otimes_\Z\bigl(R(\bar{T})^W\cdot\d\bigr)=\bigl(R(C)\otimes_\Z
R(\bar{T})^W\bigr)\cdot\d=R(\tilde{T})^W\cdot\d.
\end{multline}
Now let $a\in R(T,\omega)^{-W}\subset R(\tilde{T})^{-W}$.  It follows
from \eqref{equation;generator} that $a=b\d$ for some $b\in
R(\tilde{T})^W$.  We need to argue that $b\in R(T)$.  Let
$K\subset\tilde{T}$ be the kernel of the covering homomorphism
$\phi\colon\tilde{G}\to G$.  This group acts on the complexified
representation ring $R(\tilde{T})_\C$ as in \eqref{equation;action},
and the ring of $K$-invariants $(R(\tilde{T})_\C)^K$ is isomorphic to
$R(T)_\C$.  Since $a$ and $\d$ are in $R(T,\omega)=R(T)e^\rho$, we
have $k\cdot a=\rho(k)a$ and $k\cdot\d=\rho(k)\d$ for all $k\in K$ and
hence
$$
b\d=a=\rho(k)^{-1}k\cdot a=\rho(k)^{-1}k\cdot(b\d)
=\rho(k)^{-1}(k\cdot b)(k\cdot\d)=(k\cdot b)\d.
$$
It follows that $k\cdot b=b$ for all $k\in K$, i.e.\ $b\in R(T)$.
This proves \eqref{item;free}.  \eqref{item;antisymmetrizer} is proved
in exactly the same way as \cite[\S\,VI.3,
Proposition~1]{bourbaki;groupes-algebres}.
\end{proof}

\section{Homogeneous $\Spin^c$-structures}\label{section;spin}

In this appendix we review the classification of invariant
$\Spin^c$-structures on equal-rank homogeneous spaces, which is surely
well-known but for which we could not find a reference.  (But see
\cite[\S~2.6]{hirzebruch-slodowy;elliptic} and also Example
\ref{example;spin} below for remarks on the $\Spin$ case.)  See
\cite[Example~4.6]{meinrenken;quantization-conjugacy} and
\cite{landweber-sjamaar;spin} for examples of maximal-rank homogeneous
spaces that do not carry invariant $\Spin^c$-structures.

The notation and the assumptions are as explained at the beginning of
\S\S\,\ref{section;induction} and~\ref{section;dirac}.  Recall that
$G$ denotes a compact connected Lie group, $H$ a closed and connected
subgroup of maximal rank, and $T$ a common maximal torus of $G$ and
$H$.  Recall also that $\m=T_{\bar1}M$ denotes the tangent space at
the identity coset of the homogeneous space $M=G/H$, and $\eta\colon
H\to\SO(\m)$ denotes the tangent representation.  We denote the set of
equivalence classes of $G$-invariant $\Spin^c$-structures on $M$ by
$\lie{Spin}_G^c(M)$.%
\glossary{SpincM@$\lie{Spin}_G^c(M)$, set of equivalence classes of
invariant $\Spin^c$-structures on $M$}

\begin{section-definition}\label{definition;cspinorial}
The orthogonal representation $\eta$ is \emph{c-spinorial} if it lifts
to a homomorphism $H\to\Spin^c(\m)$.  The subgroup $H$ is
\emph{c-spinorial} if $\eta$ is c-spinorial.
\end{section-definition}

Note that $\eta$ is c-spinorial if and only if $M$ possesses a
$G$-invariant $\Spin^c$-struc\-ture, i.e.\ if and only if
$\lie{Spin}_G^c(M)$ is nonempty.  Moreover, liftings of $\eta$ to
$\Spin^c(\m)$ correspond bijectively to elements of $\lie{Spin}_G^c(M)$.

A lifting of $\eta$ to $\Spin^c(\m)$ determines a trivialization $s$
of the central extension
$$
\omega=\omega_M\colon\xymatrix@1@M+3pt{
1\ar[r]&\U(1)\ar[r]&
H^{(\omega)}\ar[r]_-\phi&H\ar[r]\ar@{.>}@<-1ex>[l]_-s&1
}
$$
defined in \eqref{equation;orientation}.  Conversely, given a section
$s$ of $\phi$ we can compose it with the canonical homomorphism
$\hat{\eta}\colon H^{(\omega)}\to\Spin^c(\m)$ to obtain a lifting of
$\eta$,
$$
H\overset{s}\longto
H^{(\omega)}\overset{\hat{\eta}}\longto\Spin^c(\m).
$$
Thus we have a natural one-to-one correspondence between
$\lie{Spin}_G^c(M)$ and the set of trivializations of the orientation
system $\omega$.

The \emph{determinant character} is the character $\det\colon
H^{(\omega)}\to\U(1)$ obtained by pulling back the determinant
character of $\Spin^c(\m)$, which is defined by $\det([z,a])=z^2$.
The homomorphism
$$
\psi=\det\times\phi\colon H^{(\omega)}\longto\U(1)\times H
$$
is a double covering map.  Let $s$ be a section of $\phi$ and let
$\gamma$ be the character $\det\circ s$ of $H$.  Then $s$ is a lifting
homomorphism of $\gamma\times\id$, as in the commutative diagram
$$
\vcenter{\xymatrix{
&H^{(\omega)}\ar[d]^\psi\\
H\ar[r]_-{\gamma\times\id}\ar@{.>}[ur]^{s}&\U(1)\times H,
}}
$$
and so $s=s_\gamma$ is uniquely determined by $\gamma$.

\begin{section-definition}\label{definition;cspinorial-character}
A character $\gamma\in\X(H)$ is \emph{c-spinorial} (relative to the
orthogonal $H$-module $\m$) if the lifting $s_\gamma$ of
$\gamma\times\id$ exists.  We denote the set of $c$-spinorial
characters by $\X(H)^c$.%
\glossary{XHc@$\X(H)^c$, c-spinorial characters of $H$}
\end{section-definition}

The conclusion is that we have natural bijections between the set
$\lie{Spin}_G^c(M)$, the set of trivializations of $\omega$, and the
set $\X(H)^c$.  This proves the first part of the following
proposition.  The map $\nu\colon\X(H)\to\frac12\X(T)$ in the second
part is defined by
$$
\nu(\gamma)
=\frac12j_H^*(\gamma)-\frac12\sum_{\alpha\in\ca{R}_M^+}\alpha
=\frac12j_H^*(\gamma)-\rho_M.
$$

\begin{section-proposition}\label{proposition;spin-c}
\begin{enumerate}
\item\label{item;spin-c}
For a $c$-spinorial character $\gamma$, define a $G$-invariant
principal $\Spin^c(\m)$-bundle over $M$ by
$$\P_\gamma=G\times^H\Spin^c(\m),$$
where the right-hand side denotes the quotient of $G\times\Spin^c(\m)$
by the $H$-action $h\cdot(g,k)=(gh^{-1},\hat{\eta}(s_\gamma(h))k)$.
The map
$$f\colon\X(H)^c\longto\lie{Spin}_G^c(M)$$
which sends $\gamma$ to the equivalence class of $\P_\gamma$ is
bijective.  In particular $\lie{Spin}_G^c(M)$ is nonempty if and only
if $\X(H)^c$ is nonempty.
\item\label{item;lifting}
A character $\gamma$ of $H$ is $c$-spinorial if and only if
$\nu(\gamma)\in\X(T)$.  Hence $\X(H)^c=\nu^{-1}(\X(T))$.
\end{enumerate}
\end{section-proposition}

\begin{proof}
It remains to prove \eqref{item;lifting}.  By covering space theory
and our standing assumption that $H$ is connected, a lifting $s$ of
$\gamma\times\id$ exists if and only if it exists on the level of
fundamental groups, as in the diagram
\begin{equation}\label{equation;fundamental}
\vcenter{\xymatrix{
&\pi_1\bigl(H^{(\omega)}\bigr)\ar[d]^{\psi_*}\\
\pi_1(H)\ar[r]_-{\gamma_*\times\id}\ar@{.>}[ur]^{s_*}
&\pi_1(\U(1))\times\pi_1(H).
}}
\end{equation}
Let $\Y(T)=\Hom(\U(1),T)$ denote the cocharacter group of $T$.  The
fundamental group $\pi_1(H)$ is naturally isomorphic to
$\Y(T)/Q(\ca{R}_H^\vee)$, where $Q(\ca{R}_H^\vee)$ is the coroot
lattice in $\Y(T)$.  (See e.g.\
\cite[\S\,IX.4.6]{bourbaki;groupes-algebres}.)  Since $\psi$ is a
covering map, $\psi_*$ maps the coroot system of $H^{(\omega)}$
bijectively to that of $\U(1)\times H$.  Therefore the lifting problem
\eqref{equation;fundamental} is equivalent to the lifting problem
$$
\xymatrix{
&{\Y\bigl(T^{(\omega)}\bigr)}\ar[d]^{\psi_*}\\
\ar[r]_-{\gamma_*\times\id}\ar@{.>}[ur]^{s_*}
{\Y(T)}&{\Y(\U(1))\oplus\Y(T)}.
}
$$
Since $\X(T)$ and $\Y(T)$ are dual abelian groups, this lifting
problem is equivalent to the dual extension problem
\begin{equation}\label{equation;extend}
\vcenter{\xymatrix{
&{\X\bigl(T^{(\omega)}\bigr)}\ar@{.>}[dl]_{s^*}\\
{\X(T)}&{\X(\U(1))\oplus\X(T)}.\ar[u]_{\psi^*=\det^*\times\phi^*}
\ar[l]^-{\gamma^*\oplus\id}
}}
\end{equation}
By Lemma \ref{lemma;weight-euler}\eqref{item;omega},
$\X(T^{(\omega)})$ is the direct sum of $\phi^*(\X(T))$ and
$\Z\cdot(\eps_0-\rho_M)$.  Let $\X$ be the image of $\psi^*$, which is
a sublattice of $\X\bigl(T^{(\omega)}\bigr)$ of index $2$.  Since
$2\rho_M$ is in $\X(T)$ and $\psi^*$ maps the generator $\eps_0$ of
$\X(\U(1))$ to the determinant character $\det=2\eps_0$, the element
$\eps_0-\rho_M\in\X\bigl(T^{(\omega)}\bigr)$ is a representative of
the nontrivial coset in $\X\bigl(T^{(\omega)}\bigr)\big/\X$.  Because
$2(\eps_0-\rho_M)$ is equal to $\psi^*(\eps_0-2\rho_M)$, the extension
problem \eqref{equation;extend} is soluble if and only if
$$
2\nu(\gamma)=j_H^*(\gamma)-2\rho_M=\gamma^*(\eps_0)-2\rho_M\in\X(T)
$$
is divisible by $2$ in $\X(T)$.  If this is the case, the extension
$s^*=s_\gamma^*$ is uniquely determined by the formula
\begin{equation}\label{equation;section}
s_\gamma^*(\eps_0-\rho_M)=\nu(\gamma)\in\X(T).
\end{equation}
This proves \eqref{item;lifting}.
\end{proof}

We call $\gamma\in\X(H)^c$ the \emph{character} of the
$\Spin^c$-structure $\P_\gamma$.  Let $\gamma\in\X(H)^c$ and
$\chi\in\X(H)$.  Then
$$\nu(\gamma+2\chi)=\nu(\gamma)+j_H^*(\chi)\in\X(T),$$
so $\gamma+2\chi\in\X(H)^c$.  Thus we have an action of the abelian
group $\X(H)$ on the set of $c$-spinorial characters defined by
$\chi\cdot\gamma=\gamma+2\chi$.  Let $\group{L}_\chi=G\times^H\C_\chi$
be the homogeneous complex line bundle on $M$ defined by
$\chi\in\X(H)$.  Recall the natural isomorphisms
\begin{equation}\label{equation;picard}
\X(H)\overset\cong\longto\Pic_G(M)\overset\cong\longto H_G^2(M,\Z),
\end{equation}
where $\Pic_G(M)$ denotes the (topological) Picard group of
isomorphism classes of homogeneous complex line bundles on $M$.  The
first map sends a character $\chi$ to the class of the bundle
$\group{L}_\chi$ and the second map sends the class of a bundle $L$ to
its equivariant Chern class $c_G^1(L)$.  (See e.g.\
\cite[Theorem~C.47]{ginzburg-guillemin-karshon;moment-maps-cobordism}.)
A natural action of $\Pic_G(M)$ on $\lie{Spin}_G^c(M)$ is defined by
$$[L]\cdot[\P]=\bigl[\U(L)\times^{\U(1)_M}\P\bigr],$$
where $\U(L)$ denotes the circle bundle associated to $L$ and we
identify the circle $\U(1)$ with the kernel of
$\psi\colon\Spin^c(\m)\to\SO(\m)$.  (The quotient in the right-hand
side is taken in the category of manifolds over $M$, and
$\U(1)_M=M\times\U(1)$ denotes the constant groupoid over $M$ with
fibre $\U(1)$.)

\begin{section-proposition}\label{proposition;principal-chern-euler}
The notation is as in Proposition {\rm\ref{proposition;spin-c}}.
\begin{enumerate}
\item\label{item;equivariant}
The bijection $f\colon\X(H)^c\to\lie{Spin}_G^c(M)$ is
$\X(H)$-equivariant with respect to the group isomorphism
\eqref{equation;picard}.
\item\label{item;torsor}
$\lie{Spin}_G^c(M)$ is a principal homogeneous space for the abelian
group $\X(H)$.  In particular $\lie{Spin}_G^c(M)$ consists of at most
one element if $H$ is semisimple.
\item\label{item;chern}
Let $\gamma\in\X(H)^c$.  The determinant line bundle of $\P_\gamma$ is
$\group{L}_\gamma=G\times^H\C_\gamma$.  Its equivariant Chern class
$c_1(\group{L}_\gamma)\in H_G^2(M,\Z)\cong\X(H)$ is equal to $\gamma$.
\item\label{item;euler}
Let $\gamma\in\X(H)^c$ and let $\dirac_\gamma$ be the $\Spin^c$ Dirac
operator associated with $\P_\gamma$.  The equivariant Euler class of
$\dirac_\gamma$ is $\e(\dirac_\gamma)=e^{\gamma/2}\e(\Dirac)\in R(H)$,
where $\Dirac$ is the twisted $\Spin^c$ Dirac operator of $M$.
\end{enumerate}
\end{section-proposition}

\begin{proof}
Let $\gamma\in\X(H)^c$ and $\chi\in\X(H)$.  The fibre of
$\U(\group{L}_\chi)\times^{\U(1)_M}\P_\gamma$ over the identity coset
$\bar1\in M$ is
$$
\U(\C_\chi)\times^{\U(1)}\Spin^c(\m),
$$
where $\U(\C_\chi)$ denotes the unit circle in $\C_\chi$.  The map
$$
\U(\C_\chi)\times^{\U(1)}\Spin^c(\m)\longto\Spin^c(\m)
$$
defined by $(z,k)\mapsto zk$ is a diffeomorphism and is equivariant
with respect to the $H$-actions defined on the left-hand side by
$$h\cdot[z,k]=[\chi(h)^{-1}z,\hat{\eta}(s_\gamma(h))k]$$
and on the right-hand side by $h\cdot
k=\hat{\eta}(s_{\gamma+2\chi}(h))k$.  It follows that
$[L_\chi]\cdot[\P_\gamma]=[\P_{\gamma+2\chi}]$, which proves
\eqref{item;equivariant}.  It is easy to verify that the
$\X(H)$-action on $\X(H)^c$ is free and transitive.  Thus
\eqref{item;torsor} follows from \eqref{item;equivariant}.  The
determinant line bundle of $\P_\gamma$ is
$$
\group{L}_\gamma=\P_\gamma\times^{\Spin^c(\m)}\C_{\det}
=\bigl(G\times^H\Spin^c(\m)\bigr)^{\Spin^c(\m)}\C_{\det}
=G\times^H\C_\gamma.
$$
The isomorphism $\X(H)\to H_G^2(M,\Z)$ defined in
\eqref{equation;picard} maps $\gamma$ to $c_G^1(\group{L}_\gamma)$,
which proves \eqref{item;chern}.  By \eqref{equation;euler} and
\eqref{equation;euler-twist}, the Euler class of $\dirac_\gamma$ is
$$
\e(\dirac_\gamma)=(\hat{\eta}\circ s_\gamma)^*\bigl([S^0]-[S^1]\bigr)
=s_\gamma^*\hat{\eta}^*\bigl([S^0]-[S^1]\bigr)
=s_\gamma^*(\e(\Dirac))\in R(H),
$$
where $S$ is the spinor module of $\Cl(\m)$.  Using Lemma
\ref{lemma;weight-euler}\eqref{item;euler-twist} and
\eqref{equation;section} we obtain
$$
j_H^*(\e(\dirac_\gamma))=s_\gamma^*\biggl(e^{\eps_0-\rho_M}
\prod_{\alpha\in\ca{R}_M^+}(1-e^\alpha)\biggl)
=e^{\nu(\gamma)}\prod_{\alpha\in\ca{R}_M^+}(1-e^\alpha)
=j_H^*\bigl(e^{\gamma/2}\e(\Dirac)\bigr),
$$
which establishes \eqref{item;euler}.
\end{proof}

\begin{section-example}[$\Spin$-structures]\label{example;spin}
Let us call the orthogonal representation $\eta\colon H\to\SO(\m)$, or
the subgroup $H$, \emph{spinorial} if $\eta$ lifts to a homomorphism
$\tilde{\eta}\colon H\to\Spin(\m)$.  Thus, $H$ is spinorial if and
only if $M$ has a $G$-invariant $\Spin$-structure, and such a
structure is unique up to equivalence, because $H$ is connected.  A
$\Spin$-structure determines a $\Spin^c$-structure by means of the
homomorphism $\kappa\circ\tilde{\eta}\colon H\to\Spin^c(\m)$, where
$\kappa\colon\Spin(\m)\to\Spin^c(\m)$ is the inclusion map.  The
corresponding $c$-spinorial character of $H$ is
$\gamma=\det\circ\kappa\circ\tilde{\eta}=0$, because $\Spin(\m)$ is
the kernel of the determinant character.  Hence, by Proposition
\ref{proposition;spin-c}\eqref{item;lifting}, $M$ is $G$-invariantly
$\Spin$ if and only if $\rho_M\in\X(T)$.  The Euler class of the
$\Spin$ Dirac operator is equal to that of the twisted $\Spin^c$ Dirac
operator $\Dirac$.
\end{section-example}
  
\begin{section-remark}\label{remark;spin}
If $H$ is c-spinorial and semisimple, then a lifting $H\to\Spin^c(\m)$
takes values in the commutator subgroup $\Spin(\m)$, so $H$ is
spinorial.
\end{section-remark}

\begin{section-example}[almost complex structures]
\label{example;almost-complex}
Up to homotopy, $G$-invariant almost complex structures on $M$
correspond bijectively to $H$-invariant orthogonal complex structures
on $\m$.  Let $J$ be such a structure; then the tangent representation
$\eta\colon H\to\SO(\m)$ takes values in $\U(\m,J)$, the group of
$J$-holomorphic orthogonal maps.  We equip $M$ with the
$\Spin^c$-structure defined by composing this homomorphism with the
canonical injection $\iota$ (see e.g.\
\cite[\S\,D.3.1]{ginzburg-guillemin-karshon;moment-maps-cobordism}),
$$
H\overset\eta\longto\U(\m,J)\overset\iota\longinj\Spin^c(\m).
$$
The associated $c$-spinorial character $\gamma$ is then
$\gamma=\det\circ\iota\circ\eta$.  Since $\det\circ\iota$ is equal to
the usual complex determinant character of $\U(\m,J)$, we have
$j_H^*(\gamma)=\sum_{k=1}^l\beta_k$, where the $\beta_k$ are the
weights of the $T$-action on $\m$ with respect to $J$.  To compute
this, let $\ca{R}_M^+=\{\alpha_1,\alpha_2,\dots,\alpha_l\}$ and let
$J_0$ be the $T$-invariant complex structure on $\m$ given by the
decomposition $\m=\bigoplus_{k=1}^l\m_k$, where
$\m_k\cong\g_\C^{\alpha_k}$.  On $\m_k$ we have $J=c_kJ_0$, where
$c_k=\pm1$.  Hence $\beta_k=c_k\alpha_k$ and
$j_H^*(\gamma)=\sum_{k=1}^lc_k\alpha_k$.
\end{section-example}

\begin{section-example}[complex structures]\label{example;complex}
As a special case of Example \ref{example;almost-complex}, let us take
$H$ to be the centralizer of a subtorus of $T$.  This is the case
precisely when $M$ has an integrable invariant almost complex
structure, as one sees from the well-known identification $M\cong
G_\C/P$ (\cite[\S\,IX.4, Exercice~8]{bourbaki;groupes-algebres}),
where $P$ is the parabolic subgroup of $G_\C$ generated by $H$ and the
negative root spaces.  The corresponding orthogonal almost complex
structure $J$ on $\m$ is positive with respect to the inner product on
$\m$, so $c_k=1$ for all $k$ and hence
$$
j_H^*(\gamma)=2\rho_M,\qquad\nu(\gamma)=0,\qquad
j_H^*(\e(\dirac_\gamma))=\prod_{k=1}^l(1-e^\alpha).
$$
The K\"ahler structure on $M$ and the associated $\Spin^c$-structure
depend on the choice of the basis of $\ca{R}_G$.
\end{section-example}


 \begin{theglossary}\begin{multicols}{2}

  \item $\bar1$, identity coset $1H\in M$, 3

  \indexspace

  \item $A\times_CB$, fibred product, 3
  \item $\alpha$, root of $G$, 3

  \indexspace

  \item $\ca{B}_G$, basis of $\ca{R}_G$, 3
  \item $BM$, unit ball bundle of cotangent bundle $T^*M$, 9

  \indexspace

  \item $\C_\chi$, one-dimensional $G$-module defined by character $\chi\in\X(G)$, 3
  \item $\Cl(E)$, Clifford algebra of a vector space or vector bundle $E$, 9

  \indexspace

  \item $[D]$, symbol class of operator $D$, 9
  \item $\d_G$, Weyl denominator of $G$, 17
  \item $\Dirac$, twisted $\Spin^c$ Dirac operator, 12
  \item $\dirac$, $\Spin^c$ Dirac operator, 20

  \indexspace

  \item $e^\chi$, class of $\C_\chi$ in $R(G)$, 3
  \item $\e(D)$, Euler class of operator $D$, 9
  \item $\eps_0$, standard generator of $\X(\U(1))\cong\Z$, 15
  \item $\eta$, tangent representation $H\to\GL(\m)$, 3

  \indexspace

  \item $G$, compact connected Lie group, 3
  \item $G\times^HV$, homogeneous vector bundle, 3
  \item $G^{(\sigma)}$, central extension of $G$ by $\U(1)$, 4
  \item $\Gamma$, smooth global sections functor, 4
  \item $\gamma$, c-spinorial character of $H$, 20
  \item $\g$, Lie algebra of $G$, 3
  \item $\g_\C^\alpha$, root space of $\g_\C$, 11

  \indexspace

  \item $H$, closed subgroup of $G$, from \S\,\ref{subsection;maximal} onward connected and containing $T$, 3
  \item $H^{(\tau)}$, central extension of $H$ by $\U(1)$, 6

  \indexspace

  \item $i$, inclusion $H\to G$, 3
  \item $i_!$, formal induction, 4
  \item $i_*$, twisted $\Spin^c$-induction, 14
  \item $i_\dirac$, $\Spin^c$-induction, 20
  \item $i_D$, induction defined by operator $D$, 8
  \item $\ind_H^G$, formal induction, 4

  \indexspace

  \item $j_G$, inclusion $T\to G$, 3
  \item $j_H$, inclusion $T\to H$, 11
  \item $J_G$, antisymmetrizer of $W_G$, 17
  \item $J_M$, relative antisymmetrizer, 17
  \item $J_M^\op$, ``opposite'' of $J_M$, 28

  \indexspace

  \item $K(\lie{E})$, Grothendieck group of category $\lie{E}$, 3

  \indexspace

  \item $M$, homogeneous space $G/H$, 3
  \item $\m$, tangent space $T_{\bar1}M$, 3
  \item $\m^\alpha$, weight space, 11

  \indexspace

  \item $\omega_M$, orientation system of $M$, 13

  \indexspace

  \item $\pi$, projection $T^*M\to M$, 14

  \indexspace

  \item $R(G)$, representation ring, 3
  \item $R(G)^*$, dual $\Z$-module of $R(G)$, 4
  \item $R(G,\sigma)$, twisted representation module, 4
  \item $R(H)^\vee$, dual $R(G)$-module of $R(H)$, 4
  \item $\ca{R}_G$, root system of $G$, 3
  \item $\ca{R}_G^+$, positive roots of $G$, 3
  \item $\ca{R}_M$, weights $\ca{R}_G\setminus\ca{R}_H$ of $\m_\C$, 11
  \item $\ca{R}_M^+$, positive weights $\ca{R}_G^+\setminus\ca{R}_H^+$ of $\m_\C$, 11
  \item $\rho_G$, half-sum of positive roots of $G$, 3
  \item $\rho_M$, half-character $\rho_G-\rho_H$, 13

  \indexspace

  \item $S$, spinor module $S^0\oplus S^1$ of $\Cl(\m)$, 13
  \item $\sigma$, central extension of $G$ by $\U(1)$, 4
  \item $SM$, unit sphere bundle of cotangent bundle $T^*M$, 9
  \item $\lie{Spin}_G^c(M)$, set of equivalence classes of invariant $\Spin^c$-structures on $M$, 36

  \indexspace

  \item $T$, maximal torus of $G$, 3
  \item $\tau$, central extension of $H$ by $\U(1)$, 6

  \indexspace

  \item $\U(1)$, unit circle, 3

  \indexspace

  \item $V_G(\lambda,\sigma)$, irreducible $G^{(\sigma)}$-module of level $1$ with highest weight $\lambda$, 21

  \indexspace

  \item $W_G$, Weyl group of $G$, 3
  \item $W^H$, shortest representatives for $W_G/W_H$, 11

  \indexspace

  \item $X$, topological $G$-space, 24
  \item $\X(G)$, character group $\Hom(G,\U(1))$, 3
  \item $\X(H)^c$, c-spinorial characters of $H$, 36

  \indexspace

  \item $\zeta$, zero section $M\to T^*M$, 9

 \end{multicols}\end{theglossary}



\bibliographystyle{amsplain}

\def\cprime{$'$}
\providecommand{\bysame}{\leavevmode\hbox to3em{\hrulefill}\thinspace}
\providecommand{\MR}{\relax\ifhmode\unskip\space\fi MR }
\providecommand{\MRhref}[2]{%
  \href{http://www.ams.org/mathscinet-getitem?mr=#1}{#2}
}
\providecommand{\href}[2]{#2}


\end{document}